\documentclass[a4paper,10pt]{amsart}
\usepackage[utf8]{inputenc}
\usepackage{hyperref}
\usepackage{relsize}
\usepackage{amssymb,amsmath,mathtools,amsthm}
\usepackage{dsfont}
\usepackage[all]{xy}

\usepackage[lmargin=1.1in,rmargin=1.1in,tmargin=1.1in,bmargin=1.1in]{geometry}

\newtheorem{theorem}{Theorem}[section]
\newtheorem{thmx}{Theorem}

\newtheorem{definition}[theorem]{Definition}
\newtheorem{convention}[theorem]{Convention}
\newtheorem{assumption}[theorem]{Assumption}
\newtheorem{example}[theorem]{Example}
\newtheorem{proposition}[theorem]{Proposition}
\newtheorem{lemma}[theorem]{Lemma}

\newtheorem{conjecture}[theorem]{Conjecture}
\newtheorem{corollary}[theorem]{Corollary}

\theoremstyle{remark}
\newtheorem{remark}[theorem]{Remark}

\usepackage{bbm}
\renewcommand{\AA}{\mathbb{A}}

\newcommand{\CC}{\mathbb{C}}

\newcommand{\DD}{\mathbb{D}}

\newcommand{\lfb}{[\![}
\newcommand{\rfb}{]\!]}

\newcommand{\GG}{\mathbb{G}}
\DeclareMathOperator{\dimv}{\underline{dim}}

\DeclareMathOperator{\Hodge}{H}
\DeclareMathOperator{\stable}{st}
\DeclareMathOperator{\Sp}{sp}
\DeclareMathOperator{\ana}{an}
\DeclareMathOperator{\Wt}{W}
\DeclareMathOperator{\Torsion}{T}
\DeclareMathOperator{\sstable}{sst}
\DeclareMathOperator{\Tfree}{F}
\DeclareMathOperator{\Perv}{Perv}
\DeclareMathOperator{\colim}{colim}
\DeclareMathOperator{\framed}{fr}
\DeclareMathOperator{\sframed}{sfr}
\DeclareMathOperator{\Quot}{Quot}

\DeclareMathOperator{\ncHilb}{ncHilb}
\DeclareMathOperator{\cyc}{cyc}
\DeclareMathOperator{\Chow}{Chow}
\DeclareMathOperator{\image}{image}
\DeclareMathOperator{\rmod}{-mod}
\DeclareMathOperator{\reduced}{red}
\DeclareMathOperator{\Def}{\mathcal{D}ef}

\DeclareMathOperator{\BPS}{BPS}
\DeclareMathOperator{\GV}{GV}
\DeclareMathOperator{\BPSs}{\mathcal{BPS}}
\DeclareMathOperator{\KK}{K_0}
\newcommand{\LL}{\mathbb{L}}

\newcommand{\PP}{\mathbb{P}}

\newcommand{\QQ}{\mathbb{Q}}
\newcommand{\QQQ}{\underline{\QQ}}

\newcommand{\ZZ}{\mathbb{Z}}
\newcommand{\Mst}{\mathfrak{M}}

\newcommand{\Msp}{\mathcal{M}}

\newcommand{\OO}{\mathcal{O}}

\newcommand{\phim}[1]{\phi^{\mon}_{#1}}

\DeclareMathOperator{\sst}{-ss}

\DeclareMathOperator{\nilp}{nilp}

\DeclareMathOperator{\ch}{ch}
\DeclareMathOperator{\Art}{\mathbf{Art}}
\DeclareMathOperator{\coker}{coker}

\DeclareMathOperator{\crit}{crit}

\DeclareMathOperator{\Hom}{Hom}

\DeclareMathOperator{\mon}{mon}
\DeclareMathOperator{\End}{End}
\DeclareMathOperator{\Grass}{Grass}
\DeclareMathOperator{\wt}{\chi_{wt}}
\DeclareMathOperator{\wtm}{\chi^{mon}_{wt}}
\DeclareMathOperator{\wth}{\chi^{mon}_{hsp}}
\DeclareMathOperator{\Ext}{Ext}
\DeclareMathOperator{\Exp}{Exp}

\DeclareMathOperator{\MMHM}{MMHM}
\DeclareMathOperator{\MHS}{MHS}

\DeclareMathOperator{\Gr}{Gr}
\DeclareMathOperator{\Vect}{Vect}

\DeclareMathOperator{\rat}{\mathbf{rat}}
\DeclareMathOperator{\lmod}{-mod}

\DeclareMathOperator{\Rep}{Rep}

\DeclareMathOperator{\con}{con}

\DeclareMathOperator{\Coh}{Coh}
\DeclareMathOperator{\QCoh}{QCoh}
\DeclareMathOperator{\cpct}{cpct}
\DeclareMathOperator{\codim}{codim}
\DeclareMathOperator{\Per}{Per}
\DeclareMathOperator{\MHM}{MHM}
\DeclareMathOperator{\IC}{IC}

\DeclareMathOperator{\Sym}{Sym}

\DeclareMathOperator{\red}{red}
\DeclareMathOperator{\Var}{Var}

\DeclareMathOperator{\Spec}{Spec}
\DeclareMathOperator{\Gl}{GL}
\DeclareMathOperator{\MMHS}{MMHS}

\DeclareMathOperator{\id}{id}

\DeclareMathOperator{\Mod}{mod}
\DeclareMathOperator{\Tr}{Tr}

\DeclareMathOperator{\pt}{pt}

\DeclareMathOperator{\rk}{rk}
\DeclareMathOperator{\forg}{\mathbf{forg}}
\DeclareMathOperator{\rforg}{\mathbf{rforg}}

\DeclareMathOperator{\vir}{vir}

\DeclareMathOperator{\Ho}{\mathcal{H}}
\DeclareMathOperator{\HO}{H}

\newcommand{\Db}{\mathcal{D}^{b}}

\title[Refined invariants of flopping curves and finite-dimensional Jacobi algebras]{Refined invariants of flopping curves and finite-dimensional Jacobi algebras}
\author{Ben Davison}

\begin{document}

\begin{abstract}
We define and study refined Gopakumar--Vafa invariants of contractible curves in complex algebraic 3-folds, alongside the cohomological Donaldson--Thomas theory of finite-dimensional Jacobi algebras.  These Gopakumar--Vafa invariants can be constructed one of two ways: as cohomological BPS invariants of contraction algebras controlling the deformation theory of these curves, as defined by Donovan and Wemyss, or by feeding the moduli spaces that Katz used to define genus zero Gopakumar--Vafa invariants into the machinery developed by Joyce et al.  The conjecture that the two definitions give isomorphic results is a special case of a kind of categorified version of the strong rationality conjecture due to Pandharipande and Thomas, that we discuss and propose a means of proving.  We prove the positivity of the cohomological/refined BPS invariants of all finite-dimensional Jacobi algebras.  This result supports this strengthening of the strong rationality conjecture, as well as the conjecture of Brown and Wemyss stating that all finite-dimensional Jacobi algebras for appropriate symmetric quivers are isomorphic to contraction algebras.
\end{abstract}

\maketitle

\section{Introduction}
\subsection{Refined Gopakumar--Vafa invariants}
Let $f\colon X\rightarrow Y$ be a threefold flopping contraction, by which we mean that $X$ is a smooth quasi-projective 3-dimensional variety over $\CC$, $Y$ is Gorenstein, and $f$ is birational, with exactly one exceptional fibre, isomorphic to a rational curve $C$.  Associated to this seemingly innocuous setup is a great deal of rich geometry, captured in part by various enumerative invariants, amongst them the Gopakumar--Vafa invariants $n_{C,1}, n_{C,2},\ldots$.  The \textit{length} \cite[Page 95--96]{CKM88} $l(C)$ of $C$ can be defined to be the length of the structure sheaf of $C$ at the generic point of $C_{\reduced}$, and $l(C)\leq 6$ \cite[Sec.1]{KaMo92}.  For $r$ greater than $l(C)$ we have $n_{C,r}=0$, so that the invariants $n_{C,r}$ for $r\in\mathbb{N}$ provide an essentially a finite list of enumerative invariants for the curve.  

The above Gopakumar--Vafa invariants turn out to be instances of Donaldson--Thomas (DT) invariants, introduced by Richard Thomas as virtual counts of ideal sheaves in Calabi--Yau threefolds \cite{RTthesis, Casson}.  The study of the DT invariants of a (noncompact) Calabi--Yau threefold $X$ via virtual counts of moduli of $A$-modules, where $A$ is a Jacobi algebra derived equivalent to $X$, was pioneered by Szendr\H{o}i in the case of the conifold \cite{Conifold}.  One of the attractive features of this ``noncommutative'' approach to DT theory is that it lends itself very easily to refinement; the virtual Euler characteristics appearing in the theory are equal to the alternating sums of the dimensions of the hypercohomology of vanishing cycle sheaves, and a natural ``categorification'' of the theory is provided by studying the hypercohomology itself.  Moreover this hypercohomology carries a mixed Hodge structure, so that a natural $q$-refinement of noncommutative DT theory is provided by the weight polynomial of these mixed Hodge structures.

Instead of studying the Gopakumar--Vafa invariants via the DT theory of an algebra $A$ that is derived equivalent to $X$, it turns out to be possible to access them via a much smaller Jacobi algebra (i.e. a finite-dimensional one), namely the contraction algebra of Donovan and Wemyss \cite{DW16}.  For instance in the case of the conifold (i.e. the Atiyah flop) this corresponds to replacing the infinite-dimensional Jacobi algebra appearing in \cite{Conifold} with the ground field $\mathbb{C}$.  

In this paper we study the noncommutative Donaldson--Thomas theory of finite-dimensional Jacobi algebras in general, defining a sequence of cohomologically graded Gopakumar--Vafa invariants $\GV_{C,1,0},\GV_{C,2,0} \ldots\in \mathrm{Ob}(\MMHS)$ in the category of monodromic mixed Hodge structures in terms of BPS cohomology of the contraction algebra associated to the flopping curve $C$ (see Definition \ref{GV_co_def})\footnote{In this paper, we reserve the terminology ``Gopakumar--Vafa invariant'' for invariants of moduli of coherent sheaves on threefolds, and ``BPS invariants'' for invariants of Jacobi algebras; as we explain in \S \ref{ThmA_proof_sec}, it is due to the very simple nature of the Hilbert--Chow morphism for the moduli spaces of sheaves that we consider that these two types of invariants coincide for categories of modules over contraction algebras.}.
\begin{thmx}
\label{ThmA}
Each $\GV_{C,r,0}$ is concentrated entirely in cohomological degree zero, and is Verdier self-dual.  The sequence $\GV_{C,1,0},\GV_{C,2,0} \ldots$ categorifies the Gopakumar--Vafa invariants, in the sense that there are equalities $\dim(\GV_{C,r,0})=n_{C,r}$.
\end{thmx}
The object $\GV_{C,r,0}$ is a monodromic mixed Hodge structure, and its dimension is a coarse numerical invariant of it.  By taking more sophisticated invariants (e.g. the weight polynomial, or Hodge spectrum) we define refined Gopakumar--Vafa invariants, in the sense of motivic DT theory.  The most refined cohomological invariant one might take is the class $[\GV_{C,r,0}]$ in the Grothendieck ring of monodromic mixed Hodge structures.  Put in terms of this invariant, the main content of the above theorem is that this invariant is \textit{effective}, since $\GV_{C,r,0}$ is a genuine monodromic mixed Hodge structure, as opposed to a cohomologically graded complex of them.

In the case $l(C)=1$ the threefold $X$ is either the Atiyah flop, or one of Reid's pagoda flops.  The refined DT invariants in these two cases were calculated in \cite{MMNS11,DM12} respectively, after which the refined DT theory for all higher length flops seemed to be out of reach.  Since $n_{C,r}=0$ for $r$ greater than the length of $C$, the effectiveness in Theorem \ref{ThmA} implies that the refined Gopakumar--Vafa invariants also vanish for $r>l(C)$, so that one can hope to completely calculate the refined DT theory of any given higher length flopping curve.  In \cite{VG19} this result is used to realise this hope and completely determine the refined DT theory of an infinite family of length 2 flopping curves.  Indeed, it is conjectured that this is the complete list of isomorphism classes of length two flopping curves, so that these results provide the key to completely determining the refined DT theory of such curves.

\subsection{Positivity of BPS invariants}
The cohomological invariants in Theorem \ref{ThmA} are defined as vanishing cycle cohomology of intersection complexes, via the definition of BPS cohomology for Jacobi algebras formulated by myself and Sven Meinhardt in \cite{DaMe15b}.  For this to make sense, we have to attach to the curve $C\subset X$ a Jacobi algebra, for which we take the contraction algebra $A_{\con}$ introduced by Donovan and Wemyss, representing the noncommutative deformation theory of the coherent sheaf $\mathcal{O}_{C_{\reduced}}$.  The agreement of the numerical invariant $\dim(\GV_{C,r,0})$ and the $r$th Gopakumar--Vafa invariant is then obtained by piecing together a string of equalities due to Behrend \cite[Thm.4.18]{Behr09}, Katz \cite[Sec.3.2]{Katz08}, Hua and Toda \cite[Thm.4.4]{HuTo18}.  

This paper was partly motivated by a conjecture of Brown and Wemyss, stating that \textit{all} finite-dimensional Jacobi algebras (for a select list of symmetric quivers) are isomorphic to contraction algebras of flopping curves; see \cite[Sec.1.6]{BW21} for an exact statement of this conjecture, along with an updated discussion of its status.  This conjecture implies that for any such finite-dimensional Jacobi algebra $A$, the numerical BPS invariants $\omega_{A,1},\ldots$ satisfy the same properties as the genus zero Gopakumar--Vafa invariants arising from a flopping curve, for example they should be \textit{positive}.  

Positivity of BPS invariants is in turn one of the guiding questions in Donaldson--Thomas theory, and generally appears to have quite deep consequences.  For instance, for $Q$ a quiver, the Kac polynomials \cite[Sec.1.14]{kac83} counting the number of absolutely indecomposable $\gamma$-dimensional representations over a field of order $q$ are given by $-q^{-1/2}\omega_{A,\gamma}(q^{-1/2})$ for $A$ a certain Jacobi algebra built out of $Q$, where $\omega_{A,\gamma}(q^{1/2})$ are the refined BPS invariants of this Jacobi algebra \cite{Moz11}.  Adding to the intrigue, the Kac positivity conjecture \cite[Conj.2]{kac83} (proved in \cite{HLRV13}) is thus equivalent to the \textit{negativity} of certain refined BPS invariants!  As a special instance of this phenomenon, the numerical (i.e. unrefined) BPS invariants counting zero-dimensional coherent sheaves on $\mathbb{C}^3$ are all $-1$ \cite[Thm.1]{MNOP2}.  Note that the Jacobi algebras $A$ considered in this paragraph are all infinite-dimensional, so that these observations are at least consistent with the Brown--Wemyss conjecture.  

Putting all of this together, the Brown--Wemyss conjecture says something rather bold about the DT theory of Jacobi algebras, and it was the original intention of this project to disprove the conjecture by finding an example of a finite-dimensional Jacobi algebra with a negative BPS invariant.  Our main theorem shows that this hunt was doomed, and provides strong evidence for the Brown--Wemyss conjecture:

\begin{thmx}
\label{ThmB}
Let $A$ be an algebraic or analytic finite-dimensional Jacobi algebra, for a quiver $Q$ with potential $W$.  Then for all dimension vectors $\gamma\in\mathbb{N}^{Q_0}$ the cohomological BPS invariants $\BPS_{A,\gamma}$ are Verdier self-dual and concentrated in cohomological degree zero.  In particular, the numerical BPS invariants $\omega_{A,\gamma}= \chi(\BPS_{A,\gamma})$ are all positive, and vanish only if the respective cohomological invariant vanishes.  All but finitely many of the invariants $\omega_{\gamma}$ do vanish, and there is a weak inequality
\[
\dim(A)\geq\sum_{\gamma\in\mathbb{N}^{Q_0}}\lvert\gamma\lvert^2\omega_{\gamma}
\]
which becomes an equality if $A$ is an analytic Jacobi algebra.
\end{thmx}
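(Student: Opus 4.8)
The plan is to reduce every assertion to one structural fact: since $A$ is finite dimensional, the good moduli space $\Msp_\gamma$ of $\gamma$-dimensional representations is a single reduced point for every $\gamma\in\NN^{Q_0}$. To see this I would first recall that, by Le Bruyn--Procesi, $\CC[\Rep_\gamma(Q)]^{\Gl_\gamma}$ is generated by traces of oriented cycles, and that restriction along the closed $\Gl_\gamma$-stable subvariety $\Rep_\gamma(A)\subseteq\Rep_\gamma(Q)$ induces a surjection $\CC[\Rep_\gamma(Q)]^{\Gl_\gamma}\twoheadrightarrow\CC[\Rep_\gamma(A)]^{\Gl_\gamma}$ by reductivity of $\Gl_\gamma$. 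Because $A$ is finite dimensional its arrow ideal is nilpotent, so every oriented cycle of positive length acts nilpotently on each $A$-module and thus has vanishing trace; hence $\CC[\Rep_\gamma(A)]^{\Gl_\gamma}=\CC$ and $\Msp_\gamma=\pt$. In particular $\Msp_\gamma$ is $0$-dimensional and proper.

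With this in hand, self-duality, concentration in degree $0$ and positivity all fall out at once. The cohomological Donaldson--Thomas machinery presents $\BPSs_{A,\gamma}$ as a monodromic, Verdier self-dual perverse sheaf on $\Msp_\gamma$; over a $0$-dimensional base the perverse $t$-structure is the standard one, so $\BPSs_{A,\gamma}$ is concentrated in a single cohomological degree. Pushing forward along the proper map $\Msp_\gamma\to\pt$ then shows that $\BPS_{A,\gamma}=\HO^*(\Msp_\gamma,\BPSs_{A,\gamma})$ is concentrated in degree $0$ and, by Verdier duality over the finite $\Msp_\gamma$, is self-dual as a monodromic mixed Hodge structure. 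Concentration in degree $0$ gives $\omega_{A,\gamma}=\chi(\BPS_{A,\gamma})=\dim\BPS_{A,\gamma}\geq 0$, strictly positive exactly when $\BPS_{A,\gamma}\neq 0$.

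The real content is the dimension estimate. Here I would exploit that the Jacobi relations are the critical-point equations of $\Tr(W)$, so that $\Crit(\Tr(W)|_{\Rep_\gamma(Q)})=\Rep_\gamma(A)$ and $\dim A$ becomes a global, stacky shadow of the total vanishing cohomology of $\Tr(W)$ --- just as, for a single loop with $A=\CC[x]/(W'(x))$, the dimension of the Jacobian ring is the Milnor number of $W$, equal to the rank of the vanishing cohomology and hence to $\omega_{A,1}$. The strategy is to assemble the $\Gl_\gamma$-equivariant vanishing cohomology of $\Tr(W)$ into a generating series and to match $\dim A$ against it term by term, the weight $\lvert\gamma\rvert^2=\dim\End_\CC(\CC^{\lvert\gamma\rvert})$ recording the endomorphisms of the total underlying vector space of a $\gamma$-dimensional representation; rewriting the series through the cohomological integrality isomorphism then brings the invariants $\omega_{A,\gamma}$ onto the other side. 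Finiteness of the support of $(\omega_{A,\gamma})_\gamma$ is then automatic: the summands are non-negative with $\lvert\gamma\rvert^2\geq 1$, so the bound $\dim A\geq\sum_\gamma\lvert\gamma\rvert^2\omega_{A,\gamma}$ leaves only finitely many nonzero.

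The hard part will be the passage from the equivariant vanishing cohomology to the BPS side, and in particular pinning down both the direction of the inequality and the mechanism of its saturation. I expect the comparison to be governed by a spectral sequence --- equivalently a weight or perverse filtration --- whose associated graded is precisely the symmetric-algebra expression supplied by integrality; passing to the associated graded can only decrease the total dimension by a non-negative amount, which yields the weak inequality for a general algebraic potential. In the analytic case the potential can be brought to a local normal form, so the global computation agrees on the nose with the local Milnor-number computation, forcing the spectral sequence to degenerate and the inequality to saturate. Making this degeneration precise, and isolating analyticity as the feature that guarantees it, is the principal obstacle, and is exactly where the interplay between the algebraic and analytic theories must be exploited.
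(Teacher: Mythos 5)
Your opening structural claim is false in the algebraic case, and this matters. Finite-dimensionality of $\CC(Q,W)$ does \emph{not} make the arrow ideal nilpotent: for the one-loop quiver with $W=X^{e+1}D$, $D$ non-constant with $D(0)\neq 0$, the Jacobi algebra is $\CC[X]/(X^{e}E)$ with $E(0)\neq 0$, which is finite-dimensional while $X$ acts invertibly on the simple modules at the nonzero roots of $E$; traces of positive-length cycles do not vanish, and the coarse moduli of $\gamma$-dimensional $A$-modules consists of several points, not one. The paper proves only the correct weaker statement (Proposition \ref{simplesProp}): the support of $\phim{\Tr(W)}\IC'_{\Msp_{\gamma}(Q)}$ is zero-dimensional because there are finitely many simple $A$-modules, each occurring in the Jordan--H\"older filtration of $A$; moreover the algebraic definition of $\BPS_{A,\gamma}$ restricts to the Serre subcategory $\mathcal{S}$ of modules on whose subquotients $\Tr(W)$ vanishes (in the analytic case, to the nilpotent locus, via Proposition \ref{nilpProp}). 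Your self-duality, degree-zero and positivity conclusions survive this repair, since a perverse, Verdier self-dual sheaf supported on finitely many points works exactly as well as on one point; but the single-reduced-point reduction cannot be the organizing fact, and, as explained next, it would wrongly erase the algebraic/analytic distinction that the theorem records.

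The dimension estimate is where the proposal genuinely fails. Passing to the associated graded of a filtration \emph{preserves} total dimension, so ``can only decrease by a non-negative amount'' is not a mechanism, and no spectral sequence degeneration is involved; worse, your normal-form heuristic would force equality for algebraic potentials too, contradicting the paper's example where $\dim(\BPS_{A,1})=e<\dim(A)$. The paper's actual argument is a support comparison across the framed integrality isomorphism (\ref{MRiso2nilp}) (respectively (\ref{MRiso2})) with $n=2$: the left-hand side is supported on modules admitting a surjection from $A^{\oplus 2}$, hence vanishes whenever $\gamma_i>2\underline{\dim}(A)_i$, and in the analytic case is nonzero in degree exactly $2\underline{\dim}(A)$, being the vanishing-cycle cohomology of a function with an isolated critical point there. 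Applying the symmetric monoidal functor $\rforg$, the right-hand side becomes a free exterior algebra with $2\lvert\gamma\lvert\,\omega_{A,\gamma}$ generators in $\NN^{Q_0}$-degree $\gamma$, whose top nonvanishing degree is the sum of the degrees of its generators, namely $\sum_{\gamma}2\lvert\gamma\lvert\,\omega_{A,\gamma}\,\gamma$; equating this with $2\underline{\dim}(A)$ and taking $\lvert\cdot\lvert$ gives $\dim(A)=\sum_{\gamma}\lvert\gamma\lvert^2\omega_{A,\gamma}$ in the analytic case. In the algebraic case the support bound only involves the simples lying in $\mathcal{S}$, whose dimension vectors sum to at most $\underline{\dim}(A)$, and this --- not any failure of degeneration --- is the source of the weak inequality: BPS cohomology simply does not see Jordan--H\"older constituents outside $\mathcal{S}$. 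Your deduction of finiteness from the inequality is valid once the inequality is established (the paper instead notes that a graded exterior algebra is finite-dimensional iff finitely generated), but as it stands the proposal has no proof of the inequality itself.
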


The last statement of the theorem mirrors a theorem of Toda concerning Gopakumar--Vafa invariants of flopping curves \cite[Thm.1.1]{Toda15}.

Since calculating $\BPS_{A,r}$ by brute force for $r\geq 4$ (in particular $r=7$) is essentially impossible, Theorem \ref{ThmB} seems to rule out using DT theory to disprove the Brown--Wemyss conjecture.

\subsection{Purity, monodromy, and deformation invariance}
\label{flopping_curves_sec}
We denote by $C_{\red}$ the reduced curve underlying $C$, so $C_{\red}\cong \mathbb{P}^1$.  The $N$\textit{-type} of $C$ is given by the numbers $(a,b)$, where $N_{C_{\reduced}\lvert X}\cong\mathcal{O}_C(a)\oplus\mathcal{O}_C(b)$.  There are only three possible $N$-types: $(-1,-1)$, $(0,-2)$ and $(1,-3)$.  The first two types correspond to $l(C)=1$, the third to $2\leq l(C)\leq 6$ --- see \cite{KaMo92} for details.  

Up to analytic local isomorphism, there is only one flopping curve of type $(-1,-1)$, the Atiyah flop, given by blowing up the singularity $Y=\Spec(\CC[x,y,z,w]/\langle xy-zw\rangle)$ along the sheaf of ideals $(x,z)$.  
\smallbreak

For $N$-type $(0,-2)$ things become a little more interesting, as there are infinitely many possible analytic local isomorphism classes.  We can still explicitly write them down, and they are parametrised, up to isomorphism, by a single integer.  Up to analytic local isomorphism we may write 
\[
Y=\Spec\left(\CC[x,y,z,w]/\langle xy+(z-w^d)(z+w^d)\rangle \right)
\]
and $p$ is obtained by blowing up along the sheaf of ideals $(x,z-w^d)$.  Here $d\geq 2$ is the \textit{width} defined by Reid \cite[Sec.5]{Rei83} -- if we set $d=1$ we recover the Atiyah flop, and we are back in $N$-type $(-1,-1)$.

We recall the ``classical'' definition of the Gopakumar--Vafa invariants for $C$, following \cite[Sec.2]{BrLeKa99}, to which we also refer for detailed references.  By \cite[Sec.1.14]{Rei83}, a generic hyperplane $Y_0$ passing through $0\in Y$ has a du Val singularity at $0$, and the projection from the proper transform $X_0\rightarrow Y_0$ is a partial resolution, through which the minimal resolution of $Y_0$ factors.  From this we obtain two pieces of combinatorial data: the Dynkin type of the du Val singularity, and the node of the associated Dynkin diagram corresponding to the curve in the minimal resolution $Z_0\rightarrow Y_0$ that is not contracted by the map $Z_0\rightarrow X_0$.  

Letting $\mathcal{Y}\rightarrow \Def(Y_0)$ be a semi-universal deformation of the singular surface $Y_0$, and $\mathcal{X}\rightarrow \Def(X_0)$ be a semi-universal deformation of $X_0$, from the defining equation of the surface $Y_0$ we obtain a map $g:\Delta\rightarrow \Def(Y_0)$ from an analytic disc, such that (analytically locally around the isolated singularity) $Y\cong \Delta\times_{\Def(Y_0)} \mathcal{Y}$ and $X\cong \Delta\times_{\Def(X_0)} \mathcal{X}$ defined via a lift $f:\Delta\rightarrow \Def(X_0)$ of $g$.  By deforming the map $f$, we deform the threefold $X$, and taking a suitably generic such deformation, the curve $C$ is replaced by a number of $(-1,-1)$-curves.  The number $n_{C,r}$ is the number of such curves to occur with homology class tending to $r[C]$ as we deform back to $X$.

We can illustrate the failure of refined DT theory to be fully deformation invariant with the aid of the $(0,-2)$ curves considered above.  In this case, the contraction algebra which determines the refined DT invariants is the Jacobi algebra
\[
A_{\con}\cong \mathbb{C}[t]/t^{d}
\]
meaning that the cohomological BPS invariants are given by the vanishing cycle cohomology of the function $t^{d+1}$ on $\mathbb{A}^1$.  This cohomology is easy to calculate: it is given by the cohomology of the nearby fibre, along with its monodromy operation (i.e. the cohomology of $d+1$ distinct points, with the cyclic permutation action), minus the cohomology of a point, with the trivial action (for the central fibre).  If we now deform the function $g$ defining $Y$ by replacing $w^d$ with some generic degree $d$ polynomial $f(w)$, the calculation of the cohomological BPS invariants instead becomes the calculation of the vanishing cycle cohomology of $h(t)$ across all of its critical values, where $(\partial/\partial t)h(t)=f(t)$.  As we (generically) deform away from $h(t)=t^{d+1}$, the critical values disperse away from each other.  The function $h(t)\colon\mathbb{A}^1\rightarrow \mathbb{A}^1$ still forms a branched cover, but now the monodromy around each critical value is a simple transposition, which from the point of view of monodromic mixed Hodge structures is normalized away (and thus trivial); see Section \ref{MMHMsec}.  

So after deforming $Y$ so that it contains $d$ $(-1,-1)$ curves, each curve contributes $1$ to the refined Gopakumar--Vafa invariant, whereas for $Y$ itself, the monodromies link up to form a more complicated permutation matrix, so that if we work at some level of refinement that remembers monodromy, the refined Gopakumar--Vafa invariant jumps for the function $h(t)=t^{d+1}$.  Nonetheless, the underlying vector space of the cohomological Gopakumar--Vafa invariant doesn't jump, and moreover, since the monodromy is built by composing the transpositions occurring for a generic deformation, it is (up to adding back in a summand corresponding to the central fibre) given by a permutation matrix and is thus semisimple.  Semisimplicity for the monodromy transformation is equivalent to purity for the weight filtration on the monodromic mixed Hodge structure $\GV_{C,r,0}$, by definition of the weight filtration on vanishing cycle cohomology \cite[Sec.5.1.6.2, p.956]{Sa88}.

Putting these hints together, one might guess that while the monodromy transformation on cohomological Gopakumar--Vafa invariants jumps as we deform the threefold $Y$, the underlying vector space does not, and nor does the weight filtration.  Since the Gopakumar--Vafa invariants for the Atiyah flop are pure, this last statement is equivalent to purity of cohomological Gopakumar--Vafa invariants.  Theorem \ref{ThmB} shows that the first of these hopes regarding deformation invariance is true, and Conjecture \ref{purityQ} claims that the second is too.

\subsection{Strong rationality}
We start this subsection by returning to Theorem \ref{ThmA}.  It turns out that there are \textit{several} canonical(!) cohomological lifts of the numbers $n_{C,r}$ (another one is closer to Katz's original definition of the genus zero Gopakumar--Vafa invariants and is discussed in Section \ref{alaK}), given by monodromic mixed Hodge structures $\BPS_{C,r,d}$ for all choices of $d$, where now $d$ records the Euler characteristic of the coherent sheaves on $X$ that this BPS cohomology corresponds to.  This is related to the fact that there are two different ways to define the numbers $n_{C,r}$ via DT theory, one by considering sheaves with Euler characteristic one, and one via sheaves with Euler characteristic zero.  

This invariance of BPS invariants under the choice of Euler characteristic zero or one arises from a general conjecture on invariants of coherent sheaves in 3-folds, going back to the work of Pandharipande and Thomas in \cite{StablePairs}, called the strong rationality conjecture (see \cite{Tod14} for the passage between the invariance conjecture and the strong rationality conjecture).

We conjecture (Conjecture \ref{SRC}) that invariance under change of Euler characteristic lifts to an isomorphism of BPS cohomology $\BPS_{C,r,d}\cong \BPS_{C,r,d'}$ for all $r,d,d'$.  The mysterious aspect of any incarnation of the strong rationality conjecture is the claim that there should be any connection between these invariants for $d-d'$ not a multiple of $r$ (tensoring with line bundles gives isomorphisms between stacks of semistable modules of class $(r,d)$ and those of class $(r,d+tr)$, with $t\in\mathbb{Z}$).  We finish the paper by discussing an explicit morphism between cohomological BPS invariants, constructed via Hecke operators in the Kontsevich--Soibelman cohomological Hall algebra, which we propose as the required isomorphism:
\[
\BPS_{C,r,d}\cong\BPS_{C,r,d+1}.
\]

\subsection{Acknowledgements}
I would like to thank Michael Wemyss for many stimulating discussions regarding contraction algebras and Gopakumar--Vafa invariants.  In addition, most of the hard evidence for the speculations regarding monodromy in the final section are the result of Gavin Brown and Michael Wemyss generously running through the gigantic list of contraction algebra potentials that they have mined.  In addition, I thank Yukinobu Toda for discussions and answers to questions relating to his work, and also Zheng Hua, Francesco Polizzi and Okke van Garderen for helpful discussions.  I would also like to thank the anonymous referee, for making many suggestions that helped with the exposition, and spotting numerous typos.  Finally I thank Francesca Carocci for pointing out a mistake in Section \ref{DEsec}, in an earlier draft.  

This research was supported by the ERC starter grant ``Categorified Donaldson--Thomas theory'' No. 759967 of the European research council, and a Royal Society university research fellowship

\subsection{Plan of the paper}
Section \ref{DTsec} is a review of cohomological Donaldson--Thomas theory for quivers with potential, providing the background for the positivity theorem (Theorem \ref{ThmB}), which we prove in Section \ref{newSec3}.  In Section \ref{curvesec} we review and complement the known facts about the geometry of flopping curves, with the aim of applying the results of Section \ref{DTsec} and \ref{newSec3} to them.  In Section \ref{finalsec} we put the two theories together, defining cohomological Gopakumar--Vafa invariants and proving Theorem \ref{ThmA}.  We finish with some speculations and conjectures regarding cohomological Gopakumar--Vafa invariants, and finally with the categorified version of the strong rationality conjecture.

\section{Cohomological noncommutative Donaldson--Thomas theory}
\label{DTsec}
\subsection{Monodromic mixed Hodge modules}
\label{MMHMsec}
We review briefly those aspects of the theory of mixed Hodge modules that we will need for this paper.  Given an analytic space $Z$, which we will assume to be separated and reduced\footnote{An unusually high proportion of the moduli spaces we deal with in this paper are nonreduced --- we always consider mixed Hodge modules on the reduced subspace.}, we refer to \cite[Sec.2]{Sai90} for the definition of the category $\MHM(Z)$ of mixed Hodge modules on $Z$, as well as proofs of the assertions below, up to the discussion of monodromic mixed Hodge modules.  

The category $\MHM(Z)$ is an upgrade of $\Perv(Z)$, the subcategory of $\Db_c(Z,\QQ)$ of the derived category of $\mathbb{Q}$-sheaves on $Z$ with analytically constructible cohomology sheaves, in the sense that there are fully faithful functors $\MHM(Z)\rightarrow \Perv(Z)$ for all $Z$, commuting with direct and inverse image functors, the Verdier duality functor $\DD_Z$, and external tensor products.

Let $f\colon Z\rightarrow U$ be a morphism of analytic spaces, which we assume to be the analytification of a morphism of complex algebraic varieties.  In particular, this implies projective compactifiability, as in \cite[Sec.2.18]{Sai90}, which is enough to define the cohomological direct image functors.  So there are functors $\Ho^i \!f_*,\Ho^i \!f_!$ from $\MHM(Z)$ to $\MHM(U)$ \cite[Sec.2.18]{Sai90}.  In addition, there are functors $\Ho^i\!f^*$ and $\Ho^i\!f^!$ from $\MHM(U)$ to $\MHM(Z)$ \cite[Prop.2.19]{Sai90}.  If $f$ is a closed embedding then $f_*=f_!$ is exact and $\Ho^i\!f_*=\Ho^i\!f_!=0$ for $i\neq 0$.  If $f$ is an open embedding then $f^*\cong f^!$ is exact, and $\Ho^i\! f^*=\Ho^i \!f^!=0$ for $i\neq 0$.  
  Finally, there are vanishing and nearby cycle functors $\phi_f$ and $\psi_f$, respectively, associated to a given holomorphic function $f$ on $Z$, which upgrade the corresponding functors on perverse sheaves in the obvious sense.

We recall that $\mathcal{L}\in \MHM(Z)$ comes with an ascending weight filtration $\Wt_{\bullet}\!\mathcal{L}$, and $\mathcal{L}$ is called pure of weight $n$ if $\Gr^{\Wt}_i(\mathcal{L})=0$ for $i\neq n$.  A cohomologically graded mixed Hodge module $\mathcal{L}$ will just be called \textit{pure} if $\Ho^i(\mathcal{L})$ is pure of weight $i$.  We denote by $\MHM(Z,n)$ the category of pure polarizable weight $n$ mixed Hodge modules.  A polarization of a weight $n$ Hodge module is given by an isomorphism $\mathcal{L}\cong \DD_Z \mathcal{L}\otimes\LL^n[-2n]$ satisfying some extra conditions (see \cite[Sec.5]{Sa88}), where $\LL:=\HO_c(\AA^1,\mathbb{Q})$.  The simple objects in the category of weight $n$ polarizable Hodge modules are provided by Saito's intermediate extension complexes $\IC_{Z'}(\mathcal{L})$, where $\mathcal{L}$ is a polarizable weight $n$ variation of Hodge structure on a locally closed analytic subvariety $Z'\subset Z$ (see \cite{Sa88}).  Moreover, the category $\MHM(Z,n)$ is semisimple.

Given two complex analytic spaces $Z_0$ and $Z_1$, there is an external tensor product
\[
\boxtimes \colon \MHM(Z_0)\times\MHM(Z_1)\rightarrow \MHM(Z_0\times Z_1)
\]
taking pairs of objects $\mathcal{F}$ and $\mathcal{G}$ in $\MHM(Z_0,n_0)$ and $\MHM(Z_1,n_1)$, respectively, to objects in $\MHM(Z_0\times Z_1, n_0+n_1)$ -- this is, for example, a consequence of Saito's theorem identifying pure mixed Hodge modules with his intermediate extensions of variations of Hodge structure on locally closed analytic subvarieties \cite[Sec.2.f, Sec.3.21, Thm.3.28]{Sai90}.

For the purposes of Donaldson--Thomas theory, it turns out to be convenient to consider mixed Hodge modules with a monodromy action, recording the usual monodromy operator on sheaves of vanishing cycles.  Fixing $Z$, we denote by $\mathcal{B}_Z\subset \MHM(Z\times\mathbb{A}^1)$ the full subcategory of graded-polarizable mixed Hodge modules $\mathcal{L}$ such that each restriction $(z\times\GG_m\hookrightarrow Z\times\AA^1)^*\mathcal{L}$ has locally constant cohomology sheaves, and we denote by $\mathcal{C}_Z\subset \mathcal{B}_Z$ the full Serre subcategory containing those mixed Hodge modules that are in the essential image of the functor 
\[
(Z\times\mathbb{A}^1\rightarrow Z)^*[1]:\MHM(Z)\rightarrow \mathcal{B}_Z.
\]
We define $\MMHM(Z)=\mathcal{B}_Z/\mathcal{C}_Z$, the Serre quotient category.

Direct image along the closed embedding $Z\times\{0\}\hookrightarrow Z\times\AA^1$ defines a functor $\iota\colon\MHM(Z)\rightarrow \MMHM(Z)$.  There is a monoidal product for monodromic mixed Hodge modules defined by setting, for $\mathcal{F}'$ and $\mathcal{F}''$ monodromic mixed Hodge modules on $Z'$ and $Z''$ respectively:
\[
\mathcal{F}'\boxtimes \mathcal{F}'':= (\id_{Z'\times Z''}\times +)_!(\mathcal{F}'\boxtimes^{\MHM}\mathcal{F}'')
\]
where the product $\boxtimes^{\MHM}$ on the right hand side is the usual exterior product of mixed Hodge modules on $Z'\times \AA^1$ and $Z''\times\AA^1$.  If $Z$ is equipped with finite morphisms $\nu:Z\times Z\rightarrow Z$ and $\pt \rightarrow Z$ making it into a monoid in the category of analytic spaces, then $\MMHM(Z)$ acquires a monoidal structure defined by $\mathcal{F}\boxtimes_{\nu}\mathcal{G}:= \nu_*(\mathcal{F}\boxtimes\mathcal{G})$, which is symmetric if $\nu$ is commutative, via the results of \cite{MMS11}.  We only consider the case in which $\nu$ is commutative in this paper.  We write $\Sym_{\nu}^n(\mathcal{F})$ for the $n$th symmetric power of $\mathcal{F}$ with respect to this symmetric monoidal structure, and define 
\begin{equation}
\label{SymCon}
\Sym_{\nu}(\mathcal{F}):=\bigoplus_{n\geq 0}\Sym^n_{\nu}(\mathcal{F})
\end{equation}
when this infinite direct sum makes sense.  Since we have made $\MMHM(Z)$ into a symmetric monoidal category, the Grothendieck group $\KK(\MMHM(Z))$ acquires the structure of a lambda ring in the usual way.  We define
\[
\Exp([\mathcal{F}])=[\Sym_{\nu}(\mathcal{F})]
\]
when the right hand side is well defined.

We denote by $\MMHS$ the category of monodromic mixed Hodge structures on a point.  If $\mathcal{F}$ is a monodromic mixed Hodge module on a space $Z$, and $\mathcal{L}$ is a monodromic mixed Hodge structure, we denote by $\mathcal{F}\otimes\mathcal{L}$ the external product, considered as a monodromic mixed Hodge module on $Z$.

\begin{remark}
The monodromy operator seems to play a key role in the cohomological Donaldson--Thomas theory of flopping curves.  As we will see, in quasi-homogeneous cases, the cohomological BPS invariants are pure of weight zero.  This means that when we pass from numerical DT theory (involving numbers) to refined DT theory (involving polynomials in $q$) nothing interesting happens: we replace numbers with the corresponding constant polynomials.  

So in order to extract interesting polynomial DT invariants it is not sufficient to just consider the weight filtration, one must also keep track of monodromy.  Conjecture \ref{purityQ} states that this is true even for non quasi-homogeneous potentials.  In the general case, by the definition of the weight filtration on vanishing cycle cohomology \cite[(5.1.6.2)]{Sa88}, Conjecture \ref{purityQ} below is equivalent to the statement that the monodromy actions that we encounter are semisimple.
\end{remark}

Consider the (cohomologically shifted) mixed Hodge module $\QQQ_{\mathbb{G}_m}$ on $\mathbb{G}_m$, and its direct image $\mathcal{L}=t_*\QQQ_{\mathbb{G}_m}$ along the map $t\colon z\mapsto z^2$.  The (shifted) mixed Hodge module $\mathcal{L}$ carries an action of $\mathbb{Z}/2\mathbb{Z}$, and we let $\mathcal{L}'\subset \mathcal{L}$ be the sign-isotypical component: it is a (shifted) variation of mixed Hodge structure of rank 1, with monodromy around 0 acting by multiplication by $-1$.  We consider $\LL=\HO_c(\AA^1,\QQ)$ as a cohomologically graded monodromic mixed Hodge structure via the embedding $\iota\colon \MHS\rightarrow \MMHS$.  It is possible to show that
\begin{equation}
\label{Lsq}
\left((j\colon\GG_m\rightarrow\AA^1)_!\mathcal{L}'\right)^{\otimes 2}\cong \mathbb{L}
\end{equation}
and so we define 
\[
\LL^{1/2}:=j_!\mathcal{L}'.
\]

If $Z$ is an irreducible analytic space with open dense regular subspace $Z^{\mathrm{reg}}\subset Z$ we define
\[
\IC_Z:=\IC_{Z}(\QQQ_{Z^{\mathrm{reg}}})\otimes\LL^{\dim(Z)/2}
\]
the weight zero pure Tate twist of the intersection complex for the constant variation of Hodge structure on the regular locus.  If $Z=\coprod_i Z_i$ is a union of irreducible analytic spaces with open dense regular subspaces, we define $\IC_Z=\bigoplus_i\IC_{Z_i}$.  
\smallbreak
If $Z$ is a smooth connected stack we define
\begin{align*}
\HO(Z,\mathbb{Q})_{\vir}:=&\HO(Z,\mathbb{Q})\otimes \LL^{-\dim(Z)/2}\\
\HO_c(Z,\mathbb{Q})_{\vir}:=&(\HO(Z,\mathbb{Q})\otimes \LL^{-\dim(Z)/2})^*
\end{align*}
Given $f$ a holomorphic function on an analytic space $Z$, we define
\[
\phim{f}:=(Z\times\GG_m)_!\phi_{f/u}(Z\times\GG_m\rightarrow Z)^*:\MHM(Z)\rightarrow \MMHM(Z)
\]
where $u$ is the coordinate on $\GG_m$.  Note that this functor is exact.

If $f\colon U\rightarrow\CC$ is a holomorphic function, $p\colon Z\rightarrow U$ is a projective morphism of algebraic varieties, and $\mathcal{F}\in \MHM(Z)$ is polarizable, then there is an isomorphism \cite[Thm.2.14]{Sai90}
\[
\phim{f}\Ho^i \!p_*\mathcal{F}\cong \Ho^i\!p_*\phim{fp}\mathcal{F}.
\]

Below we will want to consider vanishing cycle cohomology of analytic functions on stacks.  Mindful of the fact that the reader may be somewhat unfamiliar with analytic stacks, we next make concrete the construction of this cohomology.

Let $q\colon Z\rightarrow U$ be a (not necessarily projective) morphism of $G$-equivariant varieties, where $G$ is an affine algebraic group acting trivially on $U$, so that there is an induced morphism $Z/G\rightarrow U$ from the Artin quotient stack.  Let $U^{\Sp}\subset U^{\ana}$ be an analytic subset, which for us will always be the analytification of an algebraic subvariety, and let $f$ be a holomorphic function defined on an open analytic subset $U'\subset U$ with $U^{\Sp}\subset U'$.  Write $Z^{\Sp}=q^{-1}(U^{\Sp})$.  We will make the following assumption.
\begin{assumption}
The subspace $Z^{\Sp}\cap \crit(fq)$ is open inside $\crit(fq)$.
\end{assumption}
The assumption implies that restricting a mixed Hodge module that has support contained in $\crit(fq)$ to $Z^{\Sp}\cap \crit(fq)$ gives a mixed Hodge module (as opposed to a complex).

Fix a cohomological degree $i$, and let $V$ be a $G$-representation, such that there is an open sub $G$-variety $V'\subset V$, acted on freely by $G$, with $\codim_V(V\setminus V')\gg 0$.  We define
\begin{equation}
\label{ZTdef}
\tilde{Z}':=(q^{-1}(U')\times V')/G
\end{equation}
and define $\tilde{Z}^{\Sp}$ similarly.  The $G$-action in (\ref{ZTdef}) is the diagonal one.  Denote by $\tilde{q}\colon \tilde{Z}'\rightarrow U'$ the projection, and by $\tilde{f}\in \Gamma(\tilde{Z}')$ the function induced by $f$.  Define 
\begin{align}\label{lane1}
\Ho^i(q_*((\phim{fq}\IC_{Z'/G})\lvert_{Z^{\Sp}/G}))=&\Ho^i\left(\tilde{q}_*((\phim{\tilde{f}}\QQQ_{\tilde{Z}'})_{\tilde{Z}^{\Sp}})\otimes\LL^{(\dim(G)-\dim(Z))/2}\right)\\
\label{lane2}
\Ho^i(q_!((\phim{fq}\IC_{Z'/G})\lvert_{Z^{\Sp}/G}))=&\Ho^i\left(\tilde{q}_!((\phim{\tilde{f}}\QQQ_{\tilde{Z}'})_{\tilde{Z}^{\Sp}})\otimes\LL^{-\dim(V)+(\dim(G)-\dim(Z))/2}\right).
\end{align}
Then as in \cite[Sec.4.1]{DaMe15b} the monodromic mixed Hodge modules on the right hand side of \eqref{lane1} and \eqref{lane2} depend on the choice of pair $V'\subset V$ only up to canonical isomorphism, and so the MMHMs on the left hand side are well-defined.
\subsection{Refined invariants from monodromic mixed Hodge structures}
\label{specSec}
Given a monodromic mixed Hodge structure $\mathcal{L}$, the universal invariant (in the sense of taking extensions to sums) is of course $[\mathcal{L}]\in\KK(\MMHS)$.  We next discuss a few more manageable refined invariants.  In this paper ``refined invariants'' are taken to be any polynomials recovering a known numerical invariant after setting all variables to 1.

We refer to \cite[Sec.4.4]{COHA} for proofs of statements below regarding weight filtrations and Serre polynomials for $\MMHS$s.    

Let $\mathcal{D}\subset \MMHM(\mathbb{G}_m)$ be the smallest full subcategory of polarizable mixed Hodge modules, closed under extensions, containing all pure variations of Hodge structure --- i.e. those mixed Hodge modules such that the underlying perverse sheaf is locally constant.  Then the natural map $j_!\colon \mathcal{D}\rightarrow \MMHS$ is an equivalence of categories.  Denote by $G\colon \MMHS\rightarrow \mathcal{D}$ an inverse equivalence.  

There is a version of the Serre polynomial for monodromic mixed Hodge structures.  We let $\mathcal{L}$ be a cohomologically graded MMHS, and we define
\[
\wtm(\mathcal{L},q^{1/2}):=\sum_{i,j\in\mathbb{Z}}(-1)^j\rk(G(\Gr^i_{\Wt}(\Ho^j(\mathcal{L}))))q^{i/2}.
\]
The weight polynomial satisfies 
\begin{align}
\wtm(\mathcal{L}\otimes\mathcal{P},q^{1/2})&=\wtm(\mathcal{L},q^{1/2})\wtm(\mathcal{P},q^{1/2})\\
\wtm(\mathcal{L},q^{1/2})&=\wtm(\mathcal{L}',q^{1/2})+\wtm(\mathcal{L}'',q^{1/2}) &\;\;\textrm{if}\;0\rightarrow\mathcal{L}'\rightarrow\mathcal{L}\rightarrow\mathcal{L}''\rightarrow 0\;\textrm{is exact.}\label{wtadd}
\end{align}
\begin{remark}
A cohomologically graded MMHS is called \text{pure} if its $i$th cohomology is pure of weight $i$.  It follows from the definition that if $\mathcal{L}$ is such a pure cohomologically graded MMHS then $\wtm(\mathcal{L},q^{1/2})\in\mathbb{N}[(-q^{1/2})^{\pm 1}]$, and for a pure MMHS (i.e. a complex concentrated in cohomological degree zero) we have $\wtm(\mathcal{L},q^{1/2})\in\mathbb{N}$.
\end{remark}
\begin{example}
The monodromic mixed Hodge structure $\LL^{1/2}$ is pure, concentrated in cohomological dimension one, and we have 
\[
\wtm(\mathbb{L}^{1/2},q^{1/2})=-q^{1/2}.
\]
\end{example}
We define the forgetful functor
\begin{align*}
\forg\colon&\MMHS\rightarrow\MHS\\
&\mathcal{L}\mapsto (1\hookrightarrow \GG_m)^*G(\mathcal{L})[-1].
\end{align*}
This functor does not respect the monoidal structures.  It will sometimes be useful to consider a less refined functor that does.  Firstly, the usual forgetful functor taking a mixed Hodge module to its underlying perverse sheaf induces a forgetful functor
\[
\rat\colon \MMHS\rightarrow \mathcal{B}_{\Perv}/ \langle \mathbb{Q}_{\mathbb{A}^1}[1]\rangle^{\oplus}
\]
where $\mathcal{B}_{\Perv}$ is the category of perverse sheaves on $\mathbb{A}^1$ that are locally constant away from zero, and we have taken the quotient by the Serre subcategory of constant perverse sheaves.  We give this quotient the convolution monoidal product, so that $\rat$ is a symmetric monoidal functor.  Let $x$ be a coordinate on $\mathbb{A}^1$.  Then $\phi_x$, the usual vanishing cycle functor for perverse sheaves, induces a functor
\[
F:\mathcal{B}_{\Perv}/ \langle \mathbb{Q}_{\mathbb{A}^1}[1]\rangle^{\oplus}\rightarrow \Vect
\]
which is a monoidal functor by the Thom--Sebastiani isomorphism \cite{Ma01}, and furthermore a symmetric monoidal functor by \cite[Prop.3.11]{DaMe15b}.  We denote by
\[
\rforg:\MMHS\rightarrow \Vect
\]
the composition of functors $F\circ \rat$, and for $\mathcal{F}$ a monodromic mixed Hodge structure we will refer to $\rforg(\mathcal{F})$ as the underlying vector space of $\mathcal{F}$.

The composition $\forg \circ\iota$ is naturally isomorphic to the identity functor.  For $\mathcal{L}$ a MMHS, we define
\[
\dim(\mathcal{L})=\dim(\forg(\mathcal{L}))=\dim(\rforg(\mathcal{L}))
\]
to be the $\QQ$-dimension of the underlying mixed Hodge structure/vector space.  In particular, this number is always nonnegative.  If $\mathcal{L}$ is a cohomologically graded MMHS we define
\[
\chi(\mathcal{L})=\sum_{i\in\mathbb{Z}}(-1)^i\dim(\Ho^i(\forg(\mathcal{L}))).
\]
Note that $\wtm(\mathcal{L},1)=\chi(\mathcal{L})$.  In particular, if $\mathcal{L}$ is a MMHS written as $j_!\mathcal{L}'[1]$ for some variation of Hodge structure on $\GG_m$, $\dim(\mathcal{L})$ is simply the dimension of the fibre of $\mathcal{L}'$ at any point in $\GG_m$: it is insensitive to the monodromy around $0$.  So if $f$ is a function on a smooth complex manifold, there is an equality
\begin{equation}
\label{forgMon}
\chi(\HO(X,\phi_f\QQQ_X[-1]))=\chi(\HO(X,\phim{f}\QQQ_X)).
\end{equation}
\begin{proposition}
\label{mSprop}
Let $\mathcal{L}$ be a monodromic mixed Hodge structure, and let $\mathcal{P}=\forg(\mathcal{L})$.  Then there is a decomposition of mixed Hodge structures $\mathcal{P}=\mathcal{P}^0\oplus\mathcal{P}^{\neq 0}$ into summands for which the underlying $\pi_1(\GG_m)$-representations are unipotent, or have no unipotent subrepresentation, respectively.  With respect to these there is an equality
\[
\wtm(\mathcal{L},q^{1/2})=\wt(\mathcal{P}^0,q^{1/2})+q^{1/2}\wt(\mathcal{P}^{\neq 0},q^{1/2}).
\]
\end{proposition}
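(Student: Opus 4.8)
The plan is to reduce the identity to a single statement about how the functor $\forg$ transforms the weight filtration. Both sides are additive along short exact sequences: the left-hand side by \eqref{wtadd}, and the right-hand side because $\wt$ is additive and the assignment $\mathcal{P}\mapsto(\mathcal{P}^0,\mathcal{P}^{\neq 0})$ is exact, the generalized eigenspace decomposition of the monodromy operator being functorial (any morphism of monodromic mixed Hodge structures commutes with the monodromy). Since $\wtm$ and $\wt$ depend only on the associated graded object $\bigoplus_{i,j}\Gr^i_{\Wt}(\Ho^j(\mathcal{L}))$, I may assume $\mathcal{L}$ is pure, i.e. concentrated in one cohomological degree $j$ and of one weight $i$; such $\mathcal{L}$ lies in a semisimple category and is a sum of simple objects, each corresponding under $G$ to a rank-one variation of Hodge structure on $\GG_m$ carrying a weight-$i$ Hodge structure and scalar monodromy $\lambda$. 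Grouping the summands by whether $\lambda=1$ or $\lambda\neq 1$ gives $\mathcal{L}=\mathcal{L}^0\oplus\mathcal{L}^{\neq 0}$, hence $\mathcal{P}=\mathcal{P}^0\oplus\mathcal{P}^{\neq 0}$ after applying $\forg$, with the stated unipotence properties. Using that $\forg$ preserves the cohomological grading and that $\rk(G(\mathcal{M}))=\dim(\forg(\mathcal{M}))$ for every $\mathcal{M}$, everything now follows from the claim that, for pure $\mathcal{L}$ of weight $i$, the mixed Hodge structure $\forg(\mathcal{L})$ is pure of weight $i$ when the monodromy is unipotent and pure of weight $i-1$ when it has no eigenvalue one.

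Granting this claim the proposition is a one-line computation: a unipotent simple contributes $(-1)^j\dim\,q^{i/2}$ to each of $\wtm(\mathcal{L})$ and $\wt(\mathcal{P}^0,q^{1/2})$, whereas a non-unipotent simple contributes $(-1)^j\dim\,q^{i/2}$ to $\wtm(\mathcal{L})$ and $(-1)^j\dim\,q^{(i-1)/2}$ to $\wt(\mathcal{P}^{\neq 0},q^{1/2})$, the two agreeing after the factor $q^{1/2}$. The unipotent case of the claim is straightforward: a rank-one local system with trivial monodromy and a weight-$i$ Hodge structure is constant, so it lies in the essential image of $\iota$, say $\mathcal{L}\cong\iota(\mathcal{H})$ with $\mathcal{H}$ pure of weight $i$; since $\iota$ preserves weights (it is direct image along a closed embedding) and $\forg\circ\iota\cong\id$, we obtain $\forg(\mathcal{L})\cong\mathcal{H}$, pure of weight $i$ and in the same degree.

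The non-unipotent weight shift is the main obstacle, and it is exactly where the $\LL^{1/2}$-normalization of the weight filtration on $\MMHS$ intervenes. My approach is to unwind the definition of the weight filtration from \cite[Sec.4.4]{COHA}, which rests on Saito's weight filtration on vanishing cycles \cite[(5.1.6.2)]{Sa88}: the unipotent and non-unipotent parts of the monodromy weight filtration there are centred one step apart, and it is precisely this one-step gap that the identity \eqref{Lsq} encodes by forcing $\LL^{1/2}$ to carry weight one while its image under $\forg$ carries weight zero. I would first confirm the shift directly on $\LL^{1/2}$, computing $\forg(\LL^{1/2})$ as the one-dimensional, weight-zero sign-isotypic fibre of $t_*\QQQ_{\GG_m}$, and then extend it to an arbitrary non-unipotent simple by the general principle that the weight of a pure monodromic object with nontrivial semisimple monodromy exceeds the Hodge-theoretic weight of its underlying fibre by one. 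A subtlety to be handled with care is that the monodromy of the underlying variation need not preserve the Hodge filtration, so that the splitting of $\mathcal{P}$ into $\mathcal{P}^0$ and $\mathcal{P}^{\neq 0}$ as genuine mixed Hodge structures is justified only via the fact that the semisimple part of the monodromy is a morphism of mixed Hodge modules in Saito's sense; this is what legitimises both the eigenvalue decomposition and the weight comparison above.
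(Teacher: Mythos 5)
Your skeleton is the same as the paper's: reduce by additivity of both sides and the Borel--Schmid generalized-eigenvalue decomposition (the paper cites \cite{Sch73}) to simple pure objects, split into the two cases of unipotent monodromy versus monodromy without eigenvalue $1$, and handle the unipotent case by identifying the object with a skyscraper, i.e.\ an object in the image of $\iota$. (One small imprecision there: $j_!$ of a constant variation is \emph{not} in the essential image of $\iota$ inside $\MHM(\AA^1)$; the isomorphism $j_!\mathcal{G}\cong(0\hookrightarrow\AA^1)_*\mathcal{G}_x[-1]$ holds only after passing to the Serre quotient $\MMHS$, via the short exact sequence whose cokernel is the constant perverse sheaf --- this is exactly how the paper phrases it.) The genuine gap is in the non-unipotent case, which you yourself identify as the main obstacle and then resolve by fiat: you verify the weight shift on the single object $\LL^{1/2}$ and then invoke ``the general principle that the weight of a pure monodromic object with nontrivial semisimple monodromy exceeds the Hodge-theoretic weight of its underlying fibre by one.'' That principle \emph{is} the assertion to be proved in this case, and one example does not establish it. Moreover, the mechanism you propose to extract it from --- the centering conventions for the monodromy weight filtration on vanishing cycles, \cite[(5.1.6.2)]{Sa88} --- is not what is in play: the filtration entering $\wtm$ is simply Saito's weight filtration on the mixed Hodge module on $\AA^1$ representing $\mathcal{L}$, and no vanishing-cycle functor is applied anywhere in the definition.

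The missing argument is short, and is what the paper does. If the monodromy of the simple polarized variation $\mathcal{G}$ has no eigenvalue $1$, then there are no invariants or coinvariants, so $j_!\mathcal{G}\cong j_{!*}\mathcal{G}\cong j_*\mathcal{G}$ is a \emph{pure} intersection complex on $\AA^1$, simple as an object of $\MMHS$. By Saito's normalization, a weight-$w$ variation on the one-dimensional $\GG_m$ underlies, after the perverse shift, a pure Hodge module of weight $w+1$; on the other hand $\forg(\mathcal{L})=(1\hookrightarrow\GG_m)^*G(\mathcal{L})[-1]$ returns the fibre in weight $w$. This dimension-shift discrepancy of exactly one is the factor $q^{1/2}$, with no appeal to vanishing cycles needed. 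Separately, your reduction to \emph{rank-one} simples with scalar monodromy $\lambda$ is false over $\QQ$: a simple $\pi_1(\GG_m)$-representation underlying a polarizable $\QQ$-variation has monodromy of finite order $n$ and rank $\varphi(n)$ (e.g.\ rank $2$ for $n=3$). This is harmless, since your argument only uses the dichotomy ``monodromy is the identity versus no eigenvalue $1$,'' which holds for such simples, but the rank-one claim should be dropped.
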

We have used $\wt$ to denote the usual weight polynomial for (non-monodromic) mixed Hodge structures.
\begin{proof}
Since $\mathcal{L}$ is assumed graded-polarizable, as a mixed Hodge module on $\AA^1$, the decomposition of $\mathcal{P}$ according to eigenvalues (which are themselves roots of unity) is due to Borel, see \cite{Sch73} --- this decomposition is induced by the decomposition demonstrated there of $\mathcal{G}=G(\mathcal{L})$ according to generalized eigenvalues of the monodromy transformation.  By additivity (\ref{wtadd}) it is then enough to prove the statement for pure Hodge modules $j_!\mathcal{G}$, with $\mathcal{G}$ a simple polarized variation of Hodge structure on $\mathbb{C}^*$ for which the monodromy of $\mathcal{G}$ either does not have 1 as an eigenvalue, or is the identity.  In the first case $j_!\mathcal{G}\cong j_*\mathcal{G}$ is a pure intersection complex, and simple in $\MMHS$, and the statement follows.  In the second case we instead have $j_!\mathcal{G}\cong (0\hookrightarrow \mathbb{A}^1)_*\mathcal{G}_x[-1]$ in $\MMHS$, where $\mathcal{G}_x$ is the fibre of $\mathcal{G}$ at any point $x\in\mathbb{C}^*$ --- this accounts for the shift of weights.
\end{proof}
\begin{remark}
Let $\mathcal{L}$ be a cohomologically graded mixed Hodge structure.  Then Proposition \ref{mSprop} implies that $\wtm(\iota(\mathcal{L}),q^{1/2})=\wt(\mathcal{L},q^{1/2})$.
\end{remark}

If Conjecture \ref{purityQ} is true, then Theorem \ref{ThmA} implies that the Serre polynomials of the $r$th refined BPS/Gopakumar--Vafa invariants of flopping curves are equal to $n_{C,r}$, considered as constant polynomials.  As such, they are conjectured to hold no new information above the classical invariants.  As such, we consider a more refined realization of monodromic mixed Hodge structures called the Hodge spectrum.  For $\mathcal{L}$ a cohomologically graded MMHS with quasi-unipotent monodromy, with all generalized eigenvalues $d$th roots of unity, as in \cite[Sec.4.3]{KS} we define $\mathcal{P}$ as in Proposition \ref{mSprop}, and set
\begin{align*}
\wth(\mathcal{L},z_1,z_2):=&\sum_{i,j,n\in\mathbb{Z}}(-1)^n\dim\left(\Gr^{\Hodge}_i  \left(\Gr_{\Wt}^{j} \Ho^n(\mathcal{P})\right)^{0}_{\CC}\right)z_1^iz_2^{j-i}\\
&+\sum_{1\leq \alpha\leq d-1}\sum_{i,j,n\in\mathbb{Z}}(-1)^n\dim\left(\Gr^{\Hodge}_i  \left(\Gr_{\Wt}^{j} \Ho^n(\mathcal{P})\right)^\alpha_{\CC}\right)z_1^{i+\frac{\alpha}{d}}z_2^{j-i+\frac{d-\alpha}{d}}.
\end{align*}
The numbers $\{0,\ldots, d-1\}$ in the superscripts keep track of the character of the action of $\mu_d$, the $d$th roots of unity, and so a generalized eigenvalue of the quasi-unipotent monodromy action.  We have tensored with $\CC$ so that this decomposition into generalised eigenspaces makes sense.  Note that the exponents of this polynomial are rational, but we still have $\wth(\mathcal{L},q^{1/2},q^{1/2})=\wtm(\mathcal{L},q^{1/2})$, by Proposition \ref{mSprop}.

We arrange the various rings in which refined invariants live, in order of descending ``refinement'':

\[
\xymatrix{
\mathrm{K}_0^{\hat{\mu}}(\Var/\pt)[[\mathbb{A}]^{\pm 1/2}]\ar[r]^-{\chi_{\MMHS}}&\KK(\MMHS)\ar@/^1.5pc/[rrr]^{\wt}\ar@/_1pc/[rrrd]_{\chi}\ar[r]_-{\wth}&\ZZ[z_1^az_2^b\lvert a+b\in\mathbb{Z}]\ar[rr]_-{z_1,z_2\mapsto q^{1/2}}&&\mathbb{Z}[q^{\pm 1/2}]\ar[d]_-{q^{1/2}\mapsto 1}\\&&&&\mathbb{Z}.
}
\]
On the left we have also included one further ring, the naive Grothendieck ring of $\hat{\mu}$-equivariant motives.  As an Abelian group this is generated by symbols $[X]$, where $X$ carries a $\mu_n$-action for some $n$, and we impose the cut and paste relations, identify equivariant varieties up to isomorphism, and use a product that is analogous to the convolution product on monodromic mixed Hodge structures.  We refer to \cite[Sec.4]{DM11} for the remaining relations in this ring, and a thorough account of its properties.  Here we only define the ring homomorphism $\chi_{\MMHS}$:
\[
\chi_{\MMHS}: [\mu_n\curvearrowright X]\mapsto -[(X\times_{\mu_n}\mathbb{C}^*\xrightarrow{(x,z)\mapsto z^n} \mathbb{A}^1)_!\underline{\mathbb{Q}}_{X\times_{\mu_n}\mathbb{C}^*}]
\]
where $X\times_{\mu_n}\mathbb{C}^*$ is the mapping torus, i.e. the quotient of $X\times\mathbb{C}^*$ by the action $\omega\cdot (x,z)=(\omega\cdot x,z\omega^{-1})$.  The fact that this morphism respects the relations in $\mathrm{K}_0^{\hat{\mu}}(\Var/\pt)[\mathbb{L}^{1/2}]$ is easy to show, and the fact that it respects the product and $\lambda$-ring structures follows from the fact that these structures in both rings are provided by convolution (see \cite{DM11} for details).  Note that if the $\mu_n$-action on $X$ is trivial, then
\begin{align*}
\chi_{\MMHS}([X])=&-[(X\times \mathbb{C}^*\xrightarrow{(x,z)\mapsto z}\mathbb{A}^1)_!\underline{\mathbb{Q}}_{X\times \mathbb{C}^*}]\\
=&[(X\xrightarrow{x\mapsto 0}\mathbb{A}^1)_!\underline{\mathbb{Q}}_X]\\
=&[\iota(\HO(X,\mathbb{Q}))]
\end{align*}
where the second equality follows from the existence of the distinguished triangle
\[
(X\times\mathbb{C}^*\hookrightarrow X\times\mathbb{A}^1)_!\underline{\mathbb{Q}}_{X\times\mathbb{C}^*}\rightarrow \underline{\mathbb{Q}}_{X\times\mathbb{A}^1}\rightarrow (X\times\{0\}\hookrightarrow X\times\mathbb{A}^1)_!\underline{\mathbb{Q}}_X
\]
and the isomorphism $(X\times\mathbb{A}^1\xrightarrow{\pi_{\mathbb{A}^1}}\mathbb{A}^1)_!\underline{\mathbb{Q}}_{X\times\mathbb{A}^1}\cong 0$ in $\Db(\MMHS)$.
\subsection{Quivers and potentials}
Throughout, $Q$ will denote a finite quiver, i.e. a pair of finite sets $Q_1$ and $Q_0$ (the arrows and vertices, respectively), along with two maps $s,t\colon Q_1\rightarrow Q_0$ taking an arrow to its source and target, respectively.  We fix $\mathcal{P}$ to be the set of paths in $Q$, including, by convention, a path $e_i$ of length zero for each $i\in Q_0$, beginning and ending at $i$.  For $p\in \mathcal{P}$, let $\lvert p\lvert$ be the length of $p$.  

For a field $K$, which we give the discrete topology, we denote by $KQ$ the free path algebra of $Q$ over $K$.  This is the algebra having as basis the set $\mathcal{P}$, with multiplication defined on this basis by concatenation, where possible, and zero otherwise.  The vector space $KQ_{\geq N}$ spanned by paths of length at least $N$ is a two sided ideal in $KQ$.  We call a left $KQ$-module $M$ nilpotent if for all $m\in M$ there exists an $N\in\mathbb{N}$ such that $KQ_{\geq N}\cdot m=0$.  We denote by $K\lfb Q\rfb$ the topological algebra obtained by completing with respect to the ideal $KQ_{\geq 1}$.  The inclusion of underlying algebras
\[
KQ\rightarrow K\lfb Q\rfb
\]
induces a fully faithful embedding of categories
\[
K\lfb Q\rfb \rmod \rightarrow KQ\rmod
\]
which induces an equivalence of categories between the category of finite-dimensional continuous $K\lfb Q\rfb$-modules and the category of finite-dimensional nilpotent $KQ$-modules.  For $K$ a normed field (for instance, $\CC$ with its usual norm) we define as in \cite[Sec.2.2]{To18}
\[
K\{Q\}\subset K\lfb Q\rfb
\]
the subalgebra consisting of formal linear combinations $\sum_{p\in \mathcal{P}} a_p p$ such that $\lvert a_p\lvert < C^{\lvert p \lvert}$ for some constant $C\in\mathbb{R}_+$ that is independent of $p$.

Given a subset $S\subset K\lfb Q\rfb$ we denote by $K\lfb Q\rfb/\overline{ S}$ the quotient by the topological closure of $S$ (still with respect to the $KQ_{\geq 1}$-adic topology), and we denote by $\langle S\rangle$ the two-sided ideal generated by $S$.  Let $\prod_{i=1}^n a_{i} \in KQ_{\cyc}:=KQ/[KQ,KQ]$ be a single cyclic path in $KQ$, i.e. each $a_i\in Q_1$.  Then we define
\[
\partial W/\partial a=\sum_{a_i=a}a_{i+1}\cdots a_na_1\cdots a_{i-1}
\]
and extend to maps $\partial/\partial a$ from $KQ_{\cyc}$ and $K\lfb Q\rfb_{\cyc}:=K\lfb Q\rfb/\overline{[KQ,KQ]}$ to $KQ$ and $K\lfb Q\rfb$, respectively, by linearity and continuity.  Let $W\in K\lfb Q\rfb_{\cyc}$, then we define
\[
K\lfb Q,W\rfb:= K\lfb Q\rfb/\overline{\langle \partial W/\partial a \;\lvert\; a\in Q_1\rangle}.
\]
and define $K(Q,W)= KQ/\langle \partial W/\partial a\;\lvert \; a\in Q_1\rangle$ if $W$ is algebraic, i.e. it is in the image of the inclusion $KQ_{\cyc}\rightarrow K\lfb Q\rfb_{\cyc}$.  We say that $W$ is analytic if it is in the image of the inclusion $K\{Q\}_{\cyc}:=K\{Q\}/ (\overline{[KQ,KQ]}\cap K\{Q\})\rightarrow K\lfb Q\rfb_{\cyc}$, and for an analytic potential $W$ we likewise define
\[
K\{Q,W\}:=K\{Q\}/(\overline{\langle\partial W/\partial a \;\lvert\;a\in Q_1\rangle}\cap K\{Q\}).
\]
\begin{definition}
An \textit{algebraic} Jacobi algebra $A$ is an algebra isomorphic to an algebra $K(Q,W)$ as defined above.  An analytic Jacobi algebra is one that is isomorphic to $K\{Q,W\}$ as defined above.
\end{definition}

\subsection{Spaces of semistable quiver representations}

In this subsection we set the ground field to be $\CC$.  A $\CC Q$-module $M$ is defined by the tuple of complex vector spaces $(M_i:=e_i\cdot M)_{i\in Q_0}$ and the linear maps $(a\cdot\colon M_{s(a)}\rightarrow M_{t(a)})_{a\in Q_1}$ between them.  We define $\dimv(M)\in\mathbb{N}^{Q_0}$ to be the tuple of numbers $\left(\dim(M_i)\right)_{i\in Q_0}$.

For $\gamma\in\mathbb{N}^{Q_0}$ we define
\[
\AA_{Q,\gamma}:=\prod_{a\in Q_1}\Hom\left(\CC^{\gamma_{s(a)}},\CC^{\gamma_{t(a)}}\right),\quad\quad\Gl_{\gamma}:=\prod_{i\in Q_0} \Gl_{\gamma_i}(\CC).
\]
Then there is an isomorphism
\begin{equation}
\label{stackIso}
\AA_{Q,\gamma}/\Gl_{\gamma}\rightarrow \Mst_{\gamma}(Q)
\end{equation}
where the target is the stack of $\gamma$-dimensional $\CC Q$-modules.  We write $\pi_{\gamma}\colon\AA_{Q,\gamma}\rightarrow \Mst_{\gamma}(Q)$ for the $G_\gamma$-bundle projection.  Denote by $\Msp_{\gamma}(Q)$ the coarse moduli space of the stack $\Mst_{\gamma}(Q)$.  Then
\[
\Msp_{\gamma}(Q)\cong \Spec(\Gamma(\AA_{Q,\gamma})^{G_{\gamma}})
\]
and for a field extension $K\supset \CC$, the $K$-points of $\Msp_{Q,\gamma}$ correspond to $\gamma$-dimensional semisimple $KQ$-modules.  We denote by $p_{\gamma}\colon \Mst_{\gamma}(Q)\rightarrow \Msp_{\gamma}(Q)$ the affinization map.  The affinization is a coarse moduli space of semisimple representations, and the GIT moduli space associated to the trivial linearization.  

Let $\ncHilb_{\gamma}(Q)$ be the fine moduli scheme of pairs $(M,v\in M)$ such that $M$ is a $\gamma$-dimensional $\CC Q$-module and $v$ generates $M$ under the action of $\CC Q$.  This is constructed as an example of a fine moduli space of stable $\CC Q'$-modules for $Q'$, the framed quiver obtained by adding one vertex $\infty$ to $Q$, one arrow from $\infty$ to each of the original vertices of $Q$, and extending the dimension vector by setting the dimension vector at $\infty$ to be $1$; see \cite{King94} for the general construction of GIT moduli spaces of quiver representations, and \cite[Sec.2.2]{MeRe14} for the particular examples that we consider here.  In particular the forgetful map
\begin{equation}
\label{qdef}
q_{\gamma}\colon \ncHilb_{\gamma}(Q)\rightarrow \Msp_{\gamma}(Q)
\end{equation}
is projective, as there is an identification between the coarse moduli spaces of semisimple $\CC Q$-modules and $\CC Q'$-modules, and so (\ref{qdef}) is a GIT quotient map.  

More generally, we denote by $\Mst^{n\framed}_{\gamma}(Q)$ the moduli stack of pairs 
\begin{equation}
\label{frSt}
\{(M,h\colon\mathbb{C}^n\rightarrow M)\lvert\:M\;\textrm{a}\;\gamma\textrm{-dimensional}\;\CC Q\textrm{-module}, h\in\Hom_{\Vect}(\CC^n,M)\}.
\end{equation}

As in (\ref{stackIso}) there is an isomorphism
\[
\Mst^{n\framed}_{\gamma}(Q)\cong \AA_{Q',(\gamma,n)}/\Gl_{\gamma}.
\]
We define $\Msp^{n\sframed}_{\gamma}\subset \Mst^{n\framed}_{\gamma}$ to be the substack defined by the condition that $\image(h)$ generates $M$ as a $\CC Q$-module.  This substack is in fact a scheme, and the fine moduli space of stable pairs (\ref{frSt}).  Then the forgetful map $q^{n}_{\gamma}\colon \Msp^{n\sframed}_{\gamma}(Q)\rightarrow \Msp_{\gamma}(Q)$ is again a GIT quotient map, and so it is proper.  These maps play a key role in cohomological Donaldson--Thomas theory.  In brief: just as we can approximate the cohomology of the \textit{stack} $\Mst_{\gamma}(Q)$ via the \textit{schemes} $\tilde{Z}$ in Section \ref{MMHMsec}, for fixed $i$ we have isomorphisms
\begin{align}
\label{APM}
\Ho^i\!p_{\gamma,*}\phim{\Tr(W)}\QQQ_{\Mst_{\gamma}(Q)}\cong &\Ho^i\!q^n_{\gamma,*}\phim{\Tr(W)}\QQQ_{\Msp^{n\sframed}_{\gamma}(Q)}\\
\nonumber
\Ho^i\!p_{\gamma,!}\phim{\Tr(W)}\QQQ_{\Mst_{\gamma}(Q)}\cong &\Ho^{i}\!\left(q^n_{\gamma,!}\phim{\Tr(W)}\QQQ_{\Msp^{n\sframed}_{\gamma}(Q)}\otimes \LL^{-n\lvert\gamma\lvert}\right)
\end{align}
for $n\gg 0$.  So in particular the objects on the left hand side of (\ref{APM}) enjoy the properties we expect of direct images along a proper map.  We say that $p_{\gamma}$ is \textit{approximated by proper maps}.  See \cite[Sec.4.1]{DaMe15b} for more details of this notion, and \cite{DaMe15b} more generally for applications of this concept.
\begin{convention}
We continue the convention from \cite{DaMe15b} that whenever a space or morphism is defined with a dimension vector as a subscript, then we omit the subscript to denote the union across all dimension vectors.  So for example
\[
q\colon \ncHilb(Q)\rightarrow \Msp(Q)
\]
is the locally projective morphism from the fine moduli scheme of \textit{all} finite-dimensional cyclically generated $\CC Q$-modules to the coarse moduli space.  The same convention applies to sheaves and mixed Hodge modules: if $\bigoplus_{\gamma}\mathcal{F}_{\gamma}$ is a mixed Hodge module with each $\mathcal{F}_{\gamma}$ supported on some space $\Msp_{\gamma}$, we abbreviate the direct sum to $\mathcal{F}$.
\end{convention}

There is a morphism 
\begin{equation}
\label{smap}
\oplus\colon \Msp_{\gamma'}(Q)\times\Msp_{\gamma''}(Q)\rightarrow \Msp_{\gamma'+\gamma''}(Q)
\end{equation}
which at the level of points takes pairs of semisimple modules to their direct sum.  This is a finite morphism by \cite[Lem.2.1]{MeRe14}.  We define a symmetric monoidal structure on $\MMHM(\Msp(Q))$ by setting
\[
\mathcal{F}'\boxtimes_{\oplus}\mathcal{F}'':=\oplus_*\left(\mathcal{F}'\boxtimes\mathcal{F}''\right).
\]

The closed points of each $\Msp_{\gamma}(Q)$ correspond to semisimple $\gamma$-dimensional $\CC Q$-modules, and so there is a distinguished point given by the module $\bigoplus_i S_i^{\gamma_i}$ --- this is the module $M$ given by setting $M_i=\CC^{\gamma_i}$ for all $i\in Q_0$ and letting all paths of length greater than zero act via the zero map.  We denote this point $0_{\gamma}\in\mathcal{M}_{\gamma}(Q)$.  The following is proved in \cite[Lem.2.15, Sec 2.6]{To18}.
\begin{proposition}
\label{Toda_an_prop}
Let $W\in K\{Q\}_{\cyc}$ be an analytic potential, and let $\gamma$ be a dimension vector.  Then there is an analytic neighbourhood $U_{\gamma}$ of $0_{\gamma}\in\Msp_{\gamma}(Q)$ such that $\Tr(W)$ converges absolutely on $U_{\gamma}$ and on $\pi_{\gamma}^{-1}p_{\gamma}^{-1}(U_{\gamma})$.
\end{proposition}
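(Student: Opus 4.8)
The plan is to split the statement into a concrete convergence estimate near the origin of the representation space and a soft argument, using $\Gl_{\gamma}$-invariance, that propagates this convergence to the (non-compact) full preimage. Write $\Tr(W)=\sum_{p}a_{p}\Tr(\rho(p))$, the sum running over cyclic words $p$ in $Q$, with $a_{p}$ the corresponding coefficient of $W$ and $\rho(p)$ the operator obtained by multiplying the matrices attached to the arrows of $p$ in a representation $\rho\in\AA_{Q,\gamma}$. Each summand $\Tr(\rho(p))$ is a $\Gl_{\gamma}$-invariant regular function on $\AA_{Q,\gamma}$, so it descends to a regular function on $\Msp_{\gamma}(Q)=\Spec(\Gamma(\AA_{Q,\gamma})^{\Gl_{\gamma}})$; the content of the proposition is that the infinite sum of these functions converges absolutely on a suitable neighbourhood of $0_{\gamma}$ and on its full preimage in $\AA_{Q,\gamma}$.

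First I would prove absolute convergence on a polydisc $B_{\epsilon}=\{\rho:\lVert\rho\rVert<\epsilon\}$ about the origin of $\AA_{Q,\gamma}$. As $W$ is analytic there is a constant $C$ with $\lvert a_{p}\rvert<C^{\lvert p\rvert}$. Put $D=\max_{i}\gamma_{i}$; if every matrix entry of $\rho$ is bounded by $\epsilon$, then the entrywise estimate for a product of $\lvert p\rvert$ matrices gives $\lvert\Tr(\rho(p))\rvert\leq D^{\lvert p\rvert}\epsilon^{\lvert p\rvert}$, while the number of cyclic words of length $n$ is at most $\lvert Q_{1}\rvert^{n}$. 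Hence the absolute values of the length-$n$ terms sum to at most $(\lvert Q_{1}\rvert CD\epsilon)^{n}$, and the whole series is dominated by a convergent geometric series once $\epsilon<(\lvert Q_{1}\rvert CD)^{-1}$. Fix such an $\epsilon$.

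The key point for the unbounded part is that each summand $\Tr(\rho(p))$ is a class function, hence constant on $\Gl_{\gamma}$-orbits; being continuous and orbit-constant it takes the same value on $\rho$ as on the semisimplification $\rho^{\mathrm{ss}}$ representing the unique closed orbit in the fibre of $p_{\gamma}\pi_{\gamma}$ through $\rho$. Thus both the series and the associated sum of absolute values factor through $\Msp_{\gamma}(Q)$, and at a point $x=p_{\gamma}\pi_{\gamma}(\rho)$ they agree with their values at any representative of the closed orbit over $x$. Consequently, to obtain absolute convergence at every $\rho\in\pi_{\gamma}^{-1}p_{\gamma}^{-1}(x)$ it is enough to exhibit a single representative of the closed orbit over $x$ lying inside $B_{\epsilon}$.

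I would therefore set $U_{\gamma}$ to be the locus of $x\in\Msp_{\gamma}(Q)$ whose closed orbit has a representative of norm $<\epsilon$, and take $S=U_{\gamma}$. The main obstacle is to show that $U_{\gamma}$ is an open neighbourhood of $0_{\gamma}$: since $\Gl_{\gamma}$ is non-compact the fibres of $p_{\gamma}\pi_{\gamma}$ are unbounded, so one cannot simply intersect with a ball. Here I would invoke Kempf--Ness theory: the minimal-norm representatives are exactly the zeros of the moment map for a maximal compact subgroup of $\Gl_{\gamma}$, and the resulting minimal-norm function descends to a continuous function on $\Msp_{\gamma}(Q)$ that vanishes precisely at $0_{\gamma}$, whose closed orbit is that of the origin of $\AA_{Q,\gamma}$. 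Continuity makes the sublevel set $U_{\gamma}$ open, and by construction every closed orbit over $U_{\gamma}$ meets $B_{\epsilon}$. Combined with the previous paragraph this gives absolute convergence of $\Tr(W)$ on $\pi_{\gamma}^{-1}p_{\gamma}^{-1}(U_{\gamma})$, and by orbit-constancy as an analytic function on $U_{\gamma}$ itself.
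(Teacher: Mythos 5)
Your proof is correct, and it is worth noting at the outset that the paper itself contains no argument for this proposition: it is quoted directly from Toda \cite[Lem.~2.15, Sec.~2.6]{To18}, so the honest comparison is with Toda's cited lemma. Your two ingredients --- the geometric-series estimate $\lvert a_p \Tr(\rho(p))\rvert \le (\lvert Q_1\rvert C D \epsilon)^{\lvert p\rvert}$ on an entrywise polydisc, and the reduction, via $\Gl_{\gamma}$-invariance and continuity of each $\Tr(\rho(p))$, to a small-norm representative of the unique closed orbit in each fibre of $p_{\gamma}\pi_{\gamma}$ --- are exactly the content of that lemma; where you differ is in how you establish the key geometric point that every point of $\Msp_{\gamma}(Q)$ near $0_{\gamma}$ admits a closed-orbit representative of small norm. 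Your Kempf--Ness packaging of this is clean and correct: minimal vectors in a fibre are precisely the zeros of the moment map in that fibre and form a single orbit of the maximal compact $\prod_i U(\gamma_i)$, so the minimal-norm function is induced by the norm on $\mu^{-1}(0)$, and the Kempf--Ness homeomorphism $\mu^{-1}(0)/\prod_i U(\gamma_i)\cong \Msp_{\gamma}(Q)^{\ana}$ identifies your sublevel set $U_{\gamma}$ with the image of the open set $\mu^{-1}(0)\cap B_{\epsilon}$, giving openness; this is arguably more conceptual than estimating directly against the finitely many generating invariants $\Tr(p_i)$ of $\Gamma(\AA_{Q,\gamma})^{\Gl_{\gamma}}$. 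Two small points to tighten. First, make sure the norm in the Kempf--Ness step dominates the maximal matrix entry (the standard Hermitian/Frobenius norm does), so that the entrywise estimate really applies to the minimal-norm representative. Second, your closing claim that the sum is an \emph{analytic} function on $U_{\gamma}$ does not follow from pointwise absolute convergence alone (which is all the proposition literally asserts); it needs locally uniform convergence --- but this comes for free from your own bound, since on each sublevel set $\{m\le\epsilon'\}$ with $\epsilon'<\epsilon$ the series is dominated uniformly by the geometric series with ratio $\lvert Q_1\rvert CD\epsilon'$, hence converges locally uniformly on the saturation $\pi_{\gamma}^{-1}p_{\gamma}^{-1}(U_{\gamma})$ and defines a holomorphic invariant function that descends. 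Finally, you implicitly (and correctly) resolved the typo in the statement by reading $S=U_{\gamma}$.
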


\subsection{Stacks of nilpotent modules}
Let $A$ be a finitely generated $\CC$-algebra, with $\mathfrak{m}\subset A$ a two-sided ideal.  We do not assume in this subsection that $\cap_{n\in\mathbb{N}}\mathfrak{m}^n=0$.  Let $\hat{A}$ be the completion of $A$ with respect to $\mathfrak{m}$.  Setting $\Mst_n(A)$ to be the stack of $n$-dimensional $A$-modules, we let $\Mst_{n,\nilp}(A)$ be the reduced substack defined by the equations $\Tr(a)=0$ for $a\in \mathfrak{m}$.  A geometric point of $\Mst_n(A)$ corresponding to an $A\otimes_{\CC} K$-module $\rho$ is in $\Mst_{n,\nilp}(A)$ if and only if every element of $\mathfrak{m}\otimes_{\CC}K$ acts nilpotently on $\rho$.

\begin{definition}
Given a commutative $\CC$-algebra $B$, a flat family of $\mathfrak{m}$-nilpotent $A$-modules over $B$ is a finite projective $B$-module with an action of $A/\mathfrak{m}^n$ for some $n\in \mathbb{N}$.
\end{definition}
As in \cite[Rem.4.31]{DMSS12}, if $B$ is finitely generated, a flat family of nilpotent $A$-modules over $B$ is just the same as a flat family of $A$-modules which is nilpotent at every geometric point.  

\begin{proposition}
\label{nilpProp}
\begin{enumerate}
\item
If $\hat{A}$ is finite-dimensional, the stack $\Mst_{n,\nilp}(A)$ is an open and closed substack of $(\Mst_n(A))_{\reduced}$ for every $n$.
\item
Alternatively, assume that $A=\CC\{Q,W\}$ is an analytic Jacobi algebra, $A$ is finite-dimensional, and $\gamma\in\mathbb{N}^{Q_0}$ is a dimension vector.  Then $\Mst_{\gamma,\nilp}(A)$ is the reduction of an open and closed substack of $\crit(\Tr(W))\cap p^{-1}(U)$, for $U\ni 0_{\gamma}$ an analytic neighbourhood on which $\Tr(W)$ converges. 
\end{enumerate}
\end{proposition}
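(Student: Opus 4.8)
The plan is to reduce both statements to a single clopen-ness principle: once $\hat A$ is finite-dimensional, $\mathfrak{m}$-nilpotency of a finite-dimensional module is a discrete (locally constant) invariant, not merely a closed condition. I first record the pointwise translation of the defining equations, which is needed in both parts. A geometric point of $\Mst_{n,\nilp}(A)$ is a module $\rho$ with $\Tr(\rho(a))=0$ for all $a\in\mathfrak{m}$; since $\mathfrak{m}$ is an ideal it is closed under multiplication, so $a^k\in\mathfrak{m}$ and hence $\Tr(\rho(a)^k)=\Tr(\rho(a^k))=0$ for all $k\geq 1$. As $\rho(\mathfrak{m})\subset\End(\rho)$ is an associative (non-unital) subalgebra on which every trace vanishes, the characteristic-zero trace criterion shows that $\rho(\mathfrak{m})$ is a nilpotent algebra. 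Thus on reduced points the defining equations cut out exactly the nilpotent modules, matching the characterisation recorded before the proposition.

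For (1): finite-dimensionality of $\hat A=\varprojlim A/\mathfrak{m}^n$ forces the descending chain $\hat{\mathfrak{m}}^n$ to stabilise with zero intersection, so $\hat{\mathfrak{m}}^N=0$ for some $N$; pulling back along $A\to\hat A$ this gives $\mathfrak{m}^N=\mathfrak{m}^{N+1}=\cdots=:J=\bigcap_n\mathfrak{m}^n$, whence $J^2=J$ is an idempotent two-sided ideal and $A/J\cong\hat A$ is finite-dimensional. On a nilpotent $n$-dimensional module $\rho(\mathfrak{m})$ is nilpotent of index $\leq n$, so $\mathfrak{m}^{\max(n,N)}=J$ acts as zero; conversely every $A/J$-module is $\mathfrak{m}$-nilpotent. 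Hence $\Mst_{n,\nilp}(A)=\Mst_n(A/J)_{\reduced}$. Closedness is the defining trace equations. Openness, the crux, I would obtain by using that $J$ is finitely generated as a left ideal (automatic in the applications, where $\mathfrak{m}$ is generated by the finitely many arrows) together with $J^2=J$ to write $J=Ae$ for an idempotent $e\in A$; then $JM=0\iff eM=0\iff\rho(e)=0$, and since $\rho(e)$ is an idempotent endomorphism its rank is locally constant in any flat family. So $\{\rho(e)=0\}$, the nilpotent locus, is a union of connected components of $(\Mst_n(A))_{\reduced}$, i.e.\ open and closed.

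For (2): by Proposition \ref{Toda_an_prop} there is an analytic neighbourhood $U\ni 0_\gamma$ in $\Msp_\gamma(Q)$ on which $\Tr(W)$ (and the relevant convergent path-series) converge, so $\Tr(W)$ is holomorphic on $p^{-1}(U)$ and the analytic critical locus $\crit(\Tr(W))\cap p^{-1}(U)$ is exactly the stack of $\gamma$-dimensional $A=\CC\{Q,W\}$-modules lying over $U$, the Jacobi relations $\partial W/\partial a=0$ being the critical-point equations. Among these the nilpotent modules are precisely those lying over $0_\gamma$, since a $\CC Q$-module is nilpotent iff all arrows act nilpotently iff its semisimplification is $0_\gamma$. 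Finite-dimensionality of $A$ gives finite-dimensionality of $\hat A=A/\bigcap_n\mathfrak{m}^n$, so the argument of part (1) supplies an idempotent $e\in A$ with $Ae=\mathfrak{m}^N$ and nilpotent locus $=\{\rho(e)=0\}$. Since $e\in A$ acts, holomorphically by convergence on $U$, as an idempotent endomorphism on every module in $\crit(\Tr(W))\cap p^{-1}(U)$, the locus $\{\rho(e)=0\}$ is open and closed there, and its reduction is $\Mst_{\gamma,\nilp}(A)$.

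The main obstacle is the openness in (1): closedness is formal, but openness is exactly where finite-dimensionality of $\hat A$ is used, and the cleanest route is the reduction of the idempotent ideal $J=\mathfrak{m}^N$ to a genuine idempotent $e$, after which local constancy of $\operatorname{rk}(\rho(e))$ does the work. In (2) the remaining care is bookkeeping: using Proposition \ref{Toda_an_prop} to guarantee that $e$ acts holomorphically on the entire critical locus over $U$, and thereby that the nilpotent modules over $0_\gamma$ are genuinely separated, as an open and closed analytic substack, from the non-nilpotent critical points lying over $U\setminus\{0_\gamma\}$.
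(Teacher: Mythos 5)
Your reductions are sound up to the crux: the trace/Wedderburn argument identifying reduced points with nilpotent modules, the identification $\hat{A}\cong A/\mathfrak{m}^N$ with $J:=\mathfrak{m}^N$ idempotent, and the set-theoretic description of the nilpotent locus as $\{JM=0\}$ are all correct. But the key lemma you invoke --- that a two-sided ideal $J$ with $J^2=J$, finitely generated as a left ideal, equals $Ae$ for an idempotent $e$ --- is false. Take $A$ to be the algebra of upper triangular $2\times 2$ matrices over $\CC$ and $J=\CC e_{11}\oplus \CC e_{12}$ its first row. This is a two-sided idempotent ideal, finite-dimensional (so certainly finitely generated as a left ideal), but the only nonzero idempotents it contains are $e_{11}+\beta e_{12}$, and each $A(e_{11}+\beta e_{12})$ is one-dimensional, so $J\neq Ae$ for any idempotent $e$. (In this example $J=e_{11}A$, and indeed $J=eA$ or even $J=AeA$ would serve your purposes equally well, since then $JM=eAM=eM$; but you give no argument producing a one-sided idempotent generator in general, and in the generality of part (1) --- $A$ merely finitely generated, $\mathfrak{m}$ an arbitrary two-sided ideal, only $\hat{A}$ finite-dimensional --- $A$ is not semiperfect and no idempotent-lifting machinery is available. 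Note also that your proof silently adds the hypothesis that $\mathfrak{m}$ is finitely generated as a one-sided ideal, which is not assumed in the proposition.) So the openness step, which you yourself flag as the crux, is unproven.

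The paper avoids idempotents entirely: it passes to the framed moduli scheme $\Msp^{n\sframed}_n(A)$, identifies the functor of framed flat families of nilpotent modules with the colimit $F(B)=\colim \mathcal{Q}_*(B)$ over the infinitesimal thickenings of the nilpotent locus $\mathcal{Q}$, and observes that when $\hat{A}$ is finite-dimensional this functor is represented by the finite-type scheme $\Grass_{\hat{A}\lmod,n}(\hat{A}^{\oplus n})$; Lemma \ref{complLemma} then forces the thickenings to stabilize and $\mathcal{Q}_{\reduced}$ to be a union of connected components, which is exactly the clopen statement. Your part (2) inherits the gap above, together with a second unaddressed point: Proposition \ref{Toda_an_prop} guarantees convergence only of $\Tr(W)$ on $U$ and on $\pi_{\gamma}^{-1}p_{\gamma}^{-1}(U)$, not that an arbitrary element $e\in\CC\{Q\}$ acts holomorphically on the universal family over all of $\crit(\Tr(W))\cap p^{-1}(U)$, including the non-nilpotent critical points; even choosing a polynomial representative of $e$ (possible since $A$ is finite-dimensional in (2)), the relation $e^2=e$ holds only modulo the \emph{closure} of the Jacobi ideal, whose elements are not known to act by zero at such points without a further convergence argument.
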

\begin{proof}
We consider only (1), the proof for (2) is the same.  The closedness follows from the definitions, so we consider only the openness condition.  The argument is essentially that of \cite[Prop.4.28]{DMSS12}.

Let $\Mst^{n\framed}_{n}(A)$ be the moduli stack of pairs $(\rho, h\colon \CC^n\rightarrow \rho)$ of a $n$-dimensional $A$-module and a homomorphism from the fixed vector space $\CC^n$, and consider the subscheme(!) $\Msp_{n}^{n \sframed}(A)$ defined by the open condition that the image of $h$ generates $\rho$ as an $A$-module.  Then the forgetful map $\pi\colon\Mst^{n\framed}_{n}(A)\rightarrow \Mst_n(A)$ is the projection of a vector bundle, and each $\pi^{-1}(x)\cap \Msp^{n \sframed}_{n}(A)$ is open and dense in $\pi^{-1}(x)$.  Define 
\[
\mathcal{Q}:=\Msp^{n \sframed}_{n}(A)\times_{\Mst_n(A)}\Mst_{n,\nilp}(A).
\]
Then openness of $\Mst_{n,\nilp}(A)$ in $(\Mst_{n}(A))_{\reduced}$ is equivalent to openness of the subscheme $\mathcal{Q}_{\reduced}\subset \Msp_{n}^{n \sframed}(A)_{\reduced}$.

Define $\Msp^{n\sframed}_{n,\nilp}(A)$ to be the functor taking a commutative $\CC$-algebra $B$ to the set of pairs 
\begin{equation}
\label{pairs}
(\mathcal{J},h)/\textrm{isomorphism of pairs}
\end{equation}
where $\mathcal{J}$ is a $B$-flat family of nilpotent $A$-modules over $B$, and $h\colon B^{\oplus n}\rightarrow \mathcal{J}$ is a morphism of $B$-modules such that the induced map $B^{\oplus n}\otimes A\rightarrow \mathcal{J}$ is surjective. 

Then we claim that there is a natural isomorphism of functors $\Msp^{n \sframed}_{n,\nilp}(A)\rightarrow F$, with $F(B)=\colim \mathcal{Q}_*(B)$ defined as in Lemma \ref{complLemma} via the inclusion $\mathcal{Q}\subset \Msp^{n \sframed}_{n}(A)$.  This follows just as in the proof of \cite[Prop.4.28]{DMSS12}; both functors commute with filtered colimits, and so it is enough to check on finitely generated $\CC$-algebras.  But then both sides are defined by the condition of geometric points being sent to the closed subscheme $\mathcal{Q}$.

Now assume that $\hat{A}$ is finite-dimensional.  Then a pair as in (\ref{pairs}) is just a $B$-point of $\Grass_{\hat{A}\lmod, n}(\hat{A}^{\oplus n})$, so that this scheme represents the functor $\Msp^{n\sframed}_{n,\nilp}(A)$, and we deduce the result from Lemma \ref{complLemma}.

\end{proof}

\begin{lemma}\cite[Lem.4.29]{DMSS12}
\label{complLemma}
Let $Z\subset U$ be an inclusion of a closed subscheme inside a scheme of finite type, and let 
\begin{equation}
\label{nest}
Z=Z_1\subset Z_2\subset\ldots \subset U
\end{equation}
be the chain of inclusions defined by powers of the ideal $\mathcal{I}_{Z/U}$.  Define the functor $F$ from commutative $\CC$-algebras to sets by 
\[
F(B)=\colim Z_*(B).
\]
Then if $F$ is represented by a scheme of finite type, the sequence (\ref{nest}) stabilizes to a subscheme $Z_N$, and $Z_{\reduced}\subset U_{\reduced}$ is the inclusion of a union of connected components.
\end{lemma}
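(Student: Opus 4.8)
The plan is to exploit the universal property of the representing scheme $W$ together with the Yoneda lemma, reading the stabilization off from the ``identity element'' of $W$, and then to deduce the clopen statement from idempotency of the stabilized ideal. First I would record two elementary observations. Since each $Z_n\hookrightarrow Z_{n+1}$ is a closed immersion, the induced maps $Z_n(B)\to Z_{n+1}(B)$ are injective, so $F(B)=\colim_n Z_n(B)$ is a filtered union of injections and the canonical maps $Z_n(B)\hookrightarrow F(B)$ are injective for every $\CC$-algebra $B$. By Yoneda, the natural transformations $Z_n\Rightarrow F\cong W$ are represented by morphisms $\phi_n\colon Z_n\to W$ compatible with the inclusions $\iota_n\colon Z_n\hookrightarrow Z_{n+1}$, and the injectivity on $B$-points shows that each $\phi_n$ is a \emph{monomorphism} of schemes.

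Next I would run the universal element argument to force stabilization. Because $W$ is of finite type over $\CC$ it is quasi-compact, so I may cover it by finitely many affine opens $\Spec C_i\hookrightarrow W$. Viewing each inclusion as an element of $F(C_i)=\colim_n Z_n(C_i)$, it factors through some finite stage; taking $N$ to be the maximum over the finite cover yields morphisms $g_i\colon\Spec C_i\to Z_N$ with $\phi_N\circ g_i$ equal to the given open immersion. Since $\phi_N$ is a monomorphism the $g_i$ agree on overlaps, hence glue to a morphism $\psi\colon W\to Z_N$ with $\phi_N\circ\psi=\id_W$. Thus $\phi_N$ is a split epimorphism; being also a monomorphism, it is an isomorphism. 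Comparing $\phi_{N+1}\circ\iota_N=\phi_N$ then exhibits $\phi_{N+1}$ as a monomorphism admitting a section, hence an isomorphism, so $\iota_N$ is an isomorphism; applying the same argument to every $m\geq N$ gives $Z_N=Z_m$, i.e. $\mathcal{I}_{Z/U}^N=\mathcal{I}_{Z/U}^{N+1}$. This is the step I expect to be the main obstacle: the care lies in checking that the factorizations glue (using the monomorphism property) and in the purely categorical fact that a monomorphism which is a split epimorphism is invertible.

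Finally I would deduce the clopen statement. Iterating $\mathcal{I}_{Z/U}^N=\mathcal{I}_{Z/U}^{N+1}$ gives $\mathcal{J}:=\mathcal{I}_{Z/U}^N=(\mathcal{I}_{Z/U}^N)^2$, an idempotent coherent ideal on the Noetherian scheme $U$. Locally, Nakayama's lemma produces an idempotent $e$ with $\mathcal{J}=(e)$, so that $V(\mathcal{J})=Z_N$ is open and closed in $U$. Since $Z_{\reduced}=(Z_N)_{\reduced}$ carries the reduced induced structure on this clopen support, the inclusion $Z_{\reduced}\hookrightarrow U_{\reduced}$ is that of a union of connected components, as claimed.
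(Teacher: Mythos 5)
Your proof is correct. The paper does not actually prove this lemma --- it imports it verbatim from \cite[Lem.4.29]{DMSS12} --- and your argument is essentially the one given there: representability plus quasi-compactness of the finite-type representing scheme $W$ forces a finite affine cover of $W$ (equivalently, the universal element) to factor through a finite stage $Z_N$, so that the monomorphism $\phi_N$ acquires a section and is therefore an isomorphism, after which idempotency of $\mathcal{I}_{Z/U}^N$ together with Nakayama produces the idempotent cutting out $Z_N$ as an open and closed subscheme, whence $Z_{\reduced}\subset U_{\reduced}$ is a union of connected components.
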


\section{Cohomological BPS invariants for Jacobi algebras}
\label{newSec3}
\subsection{Integrality}
\label{BPSsec}
We will assume throughout this section that $Q$ is a symmetric quiver, in the sense that for every pair of vertices $i,j\in Q_0$ there are as many arrows from $i$ to $j$ as from $j$ to $i$.  Everything up to and including Definition \ref{BPSdef} can be generalised to non-symmetric quivers carrying King stability conditions -- see \cite{MeRe14} and \cite{DaMe4,DaMe15b} for details.

Define $\IC'_{\Msp_{\gamma}(Q)}\in\MMHM(\Msp_{\gamma}(Q))$ by 
\[
\IC'_{\Msp_{\gamma}(Q)}:=\begin{cases} \IC_{\Msp_{\gamma}(Q)}&\textrm{if there is a simple }\gamma\textrm{-dimensional } \mathbb{C}Q\mathrm{-module}\\
0&\textrm{otherwise}.\end{cases}
\]

In \cite[Prop.4.3+Thm.4.6]{MeRe14} Meinhardt and Reineke prove the following 
\begin{theorem}
\label{MRthm}
There are equalities in the Grothendieck ring of algebraic mixed Hodge modules on $\Msp(Q)$:
\begin{align}
[q^n_!\IC_{\Msp^{n\sframed}(Q)}]=&\left[\Sym_{\oplus}\left(\bigoplus_{\gamma\neq 0} \IC'_{\Msp_{\gamma}(Q)}\otimes \HO(\CC\PP^{n\lvert \gamma\lvert-1},\QQ)_{\vir}\right)\right]
\end{align}
if $n$ is even, and
\begin{align}
[p_!\IC_{\Mst(Q)}]=&\left[\Sym_{\oplus}\left(\bigoplus_{\gamma\neq 0} \IC'_{\Msp_{\gamma}(Q)}\otimes \HO_c(\pt/\mathbb{C}^*,\QQ)_{\vir}\right)\right].
\end{align}
\end{theorem}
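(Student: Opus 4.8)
The plan is to deduce both identities from Saito's decomposition theorem applied to the proper maps $q^n_\gamma$, combined with a factorization argument that repackages the resulting decomposition as the symmetric-power expression $\Sym_\oplus$. I would prove the framed identity first and then obtain the stack identity from it using the approximation-by-proper-maps relations (\ref{APM}).

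The first step is to record the two structural inputs. Since $\Msp^{n\sframed}_\gamma(Q)$ is a smooth quasi-projective variety (a fine moduli of stable framed representations), its intersection complex is pure; as $q^n_\gamma$ is proper, $q^n_{\gamma,!}\IC_{\Msp^{n\sframed}_\gamma(Q)}=q^n_{\gamma,*}\IC_{\Msp^{n\sframed}_\gamma(Q)}$ is pure and splits, in $\KK(\MMHM(\Msp_\gamma(Q)))$, as a direct sum of shifted Tate-twisted intersection complexes $\IC_{\overline{S}}(\mathcal{L})$ whose supports $\overline{S}$ are closures of strata of the natural stratification of $\Msp_\gamma(Q)$ by isomorphism type of the semisimple module $\bigoplus_k S_k^{m_k}$. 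The task is then to identify these supports and local systems and to show that the whole sum reassembles into $\Sym_\oplus$.

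Next I would analyze the top stratum. Over the open locus $\Msp^{\mathrm{s}}_\gamma(Q)\subset\Msp_\gamma(Q)$ of simple modules, every nonzero framing generates, so the fibre of $q^n_\gamma$ over a simple $M$ is $(\Hom(\CC^n,M)\setminus 0)/\Aut(M)=\CC\PP^{n\lvert\gamma\rvert-1}$; thus $q^n_\gamma$ restricts to a $\CC\PP^{n\lvert\gamma\rvert-1}$-bundle there, contributing $\IC'_{\Msp_\gamma(Q)}\otimes\HO(\CC\PP^{n\lvert\gamma\rvert-1},\QQ)_{\vir}$ (the sheaf being zero precisely when $\Msp^{\mathrm{s}}_\gamma(Q)=\varnothing$, i.e.\ when no simple $\gamma$-dimensional module exists, matching the definition of $\IC'$; the restriction to even $n$ is a normalization keeping the half-Tate twists integral). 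The heart of the argument is a support/factorization lemma: over the locus where a module splits as $M'\oplus M''$ of types $\gamma'+\gamma''=\gamma$, the complex $q^n_!\IC$ restricts to the external product under $\oplus$ of the corresponding complexes for $\gamma'$ and $\gamma''$. Feeding this compatibility into the induction on $\lvert\gamma\rvert$ forces every deeper summand to arise from $\boxtimes_\oplus$-products of top-stratum pieces of smaller dimension vectors, which is exactly the combinatorial content of $\Sym_\oplus$; one then reads off the first identity.

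The main obstacle will be this support/factorization step. Because $q^n_\gamma$ is in general not semismall, the decomposition theorem produces many summands, and controlling all of their supports --- proving that the only genuinely new support appearing at level $\gamma$ is the whole of $\Msp_\gamma(Q)$ carrying $\IC'_{\Msp_\gamma(Q)}$, and that the multiplicities match the $\CC\PP^{n\lvert\gamma\rvert-1}$-cohomology against the bookkeeping of $\Sym_\oplus$ --- is the delicate part, where the symmetry hypothesis on $Q$ and a weight/purity argument enter. Finally, for the stack identity I would pass to the full stack via (\ref{APM}): as $n$ grows the fibres $\CC\PP^{n\lvert\gamma\rvert-1}$ approximate $\CC\PP^\infty=\pt/\CC^*$, and the $\LL$-twists built into the lower-shriek normalization in (\ref{APM}) convert $\HO(\CC\PP^{n\lvert\gamma\rvert-1},\QQ)_{\vir}$ into $\HO_c(\pt/\CC^*,\QQ)_{\vir}$ in the limit; combined with the stabilization asserted in (\ref{APM}), this yields $[p_!\IC_{\Mst(Q)}]$ as the corresponding $\Sym_\oplus$-expression.
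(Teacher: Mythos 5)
First, a structural point: the paper contains no proof of this statement to compare against --- Theorem \ref{MRthm} is quoted verbatim from Meinhardt--Reineke \cite{MeRe14}, so your attempt has to be measured against their argument. Your outline does reproduce the correct skeleton of that argument (smoothness of the framed moduli space, properness of $q^n_\gamma$, purity of the pushforward and Saito's decomposition theorem, the identification of the fibre over a simple module as $\CC\PP^{n\lvert\gamma\rvert-1}$ via Schur's lemma, and an induction organized by the $\oplus$-stratification). But the step you flag as ``the delicate part'' is not a deferrable technicality: it is the entire content of the theorem. In \cite{MeRe14} the Grothendieck-ring identity with \emph{unspecified} summands $\mathcal{DT}_\gamma$ in place of $\IC'_{\Msp_{\gamma}(Q)}$ is the comparatively soft half --- it is a Hall-algebra/generating-function computation in the style of Engel--Reineke, with the DT classes \emph{defined} by inverting the $\Sym_{\oplus}$ expression --- and the theorem proper is the identification of those classes with intersection complexes. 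That identification rests on their support lemma, a genuinely new geometric input (a scaling/dimension estimate on the supports of Verdier self-dual summands, where the hypothesis that $Q$ is symmetric enters through the symmetrized Euler form); nothing in your proposal substitutes for it, so the multiplicity matching you need at each inductive stage remains unproved.

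Second, your proposed replacement for that input --- the ``support/factorization lemma'' asserting that $q^n_!\IC_{\Msp^{n\sframed}(Q)}$ restricts, over the locus of modules $M'\oplus M''$, to the external product under $\oplus$ of the complexes for $\gamma'$ and $\gamma''$ --- is only correct where the summands are pairwise non-isomorphic. Already for the one-loop quiver the fibre of $q^n_\gamma$ over $S_x\oplus S_y$ is $\CC\PP^{n-1}\times\CC\PP^{n-1}$ for $x\neq y$, but over the small diagonal $x=y$ non-split extensions appear in the fibre and the product description fails; the divided-power combinatorics of $\Sym_{\oplus}$ along these diagonals is precisely what the naive factorization misses, and it is why \cite{MeRe14} run the bookkeeping as an identity in the Grothendieck ring via Hall-algebra relations rather than by sheaf-level restriction to strata. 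Your final step --- deducing the stack identity from the framed one via the approximation relations (\ref{APM}), with the $\LL$-twists converting $\HO(\CC\PP^{n\lvert\gamma\rvert-1},\QQ)_{\vir}$ into $\HO_c(\pt/\CC^*,\QQ)_{\vir}$ in the limit --- is sound and consistent with how the paper uses these objects (and with your parenthetical remark that even $n$ is a parity normalization for the half-Tate twists), but as written the core of your argument restates what must be shown rather than proving it.
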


See the discussion around (\ref{SymCon}) for the definition of $\Sym_{\oplus}$.
\begin{proposition}
\label{MRc1}
There are isomorphisms of analytic mixed Hodge modules on $\Msp(Q)^{\ana}$:
\begin{align}
\label{MRiso}
\Ho \!\left(q^n_!\IC_{\Msp^{n\sframed}(Q)}\right)\cong&\Sym_{\oplus}\left(\bigoplus_{\gamma\neq 0} \IC'_{\Msp_{\gamma}(Q)}\otimes \HO(\CC\PP^{n\lvert \gamma\lvert-1},\QQ)_{\vir}\right)
\end{align}
if $n$ is even, and
\begin{align}
\label{MRisop}
\Ho \!\left(p_!\IC_{\Mst(Q)}\right)\cong&\Sym_{\oplus}\left(\bigoplus_{\gamma\neq 0} \IC'_{\Msp_{\gamma}(Q)}\otimes \HO_c(\pt/\mathbb{C}^*,\QQ)_{\vir}\right).
\end{align}
\end{proposition}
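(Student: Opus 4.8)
The plan is to promote the Grothendieck-group identities of Theorem~\ref{MRthm} to genuine isomorphisms, the only missing ingredient being \emph{purity} of both sides together with the semisimplicity of the category of pure polarizable Hodge modules recalled above. The underlying principle is that a cohomologically graded mixed Hodge module which is pure---meaning its $i$th cohomology is pure of weight $i$---is recovered up to isomorphism from its class in $\KK(\MHM(\Msp(Q)))$: the weight grading separates the cohomological degrees, so no cancellation can occur across degrees, and within each fixed weight the Grothendieck group is freely generated by the classes of the simple objects. Thus once both sides of each identity are known to be pure, the equality of classes forces an isomorphism of cohomologically graded objects, and it remains only to establish purity on each side.

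First I would treat the right-hand sides. Each $\IC'_{\Msp_\gamma(Q)}$ is pure of weight zero by construction, while $\HO(\CC\PP^{n\lvert\gamma\rvert-1},\QQ)_{\vir}$ and $\HO_c(\pt/\CC^*,\QQ)_{\vir}$ are pure cohomologically graded (monodromic) mixed Hodge structures, being the cohomology of a smooth projective space and of the classifying stack $\pt/\CC^*$ respectively, each with its normalizing Tate twist. Since $\oplus$ is finite, the product $\boxtimes_\oplus=\oplus_*(\cdot\boxtimes\cdot)$ preserves purity, so every symmetric power $\Sym^m_\oplus$ of a pure object is pure; local finiteness of $\oplus$ ensures that the infinite sum defining $\Sym_\oplus$ is well defined and finite-dimensional in each fixed bidegree. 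Hence both right-hand sides are pure.

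Next the left-hand sides. For even $n$ the morphism $q^n$ is proper and $\IC_{\Msp^{n\sframed}(Q)}$ is a pure Hodge module, so Saito's decomposition theorem gives that $q^n_!\IC_{\Msp^{n\sframed}(Q)}=q^n_*\IC_{\Msp^{n\sframed}(Q)}$ is pure, splitting as a sum of shifted intersection complexes with $\Ho^i$ pure of weight $i$, so that its total cohomology is a pure cohomologically graded object. In the stacky identity the affinization $p$ is not proper, and here I would invoke the approximation by proper maps: by the zero-potential form of~(\ref{APM}) each $\Ho^i p_!\IC_{\Mst(Q)}$ is a Tate twist of $\Ho^i$ of a genuine proper pushforward of a pure object, and since Tate twists and proper pushforwards of pure objects remain pure, $\Ho^i p_!\IC_{\Mst(Q)}$ is pure of weight $i$.

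Combining these purity statements with Theorem~\ref{MRthm} yields the isomorphisms in the category of algebraic mixed Hodge modules on $\Msp(Q)$; applying the analytification functor, which is exact and commutes with $\oplus_*$ and hence with $\Sym_\oplus$, transports them to the asserted isomorphisms of analytic mixed Hodge modules on $\Msp(Q)^{\ana}$. I expect the main obstacle to be the purity of $p_!\IC_{\Mst(Q)}$: the non-properness of $p$ must be circumvented entirely through the approximation by proper maps, with the $\LL$-twists appearing in~(\ref{APM}) tracked precisely so that the weights land in the correct cohomological degrees, and this is also where one must confirm that the infinite symmetric product built from the unbounded object $\HO_c(\pt/\CC^*,\QQ)_{\vir}$ is well behaved.
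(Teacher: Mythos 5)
Your proposal takes essentially the same route as the paper's proof: establish purity of both sides (the decomposition theorem for the projective $q^n$, approximation of $p$ by proper maps for the stacky identity, and finiteness of $\oplus$ for the symmetric powers), invoke semisimplicity of the category of pure polarizable Hodge modules of fixed weight to upgrade the Grothendieck-group identities of Theorem \ref{MRthm} to isomorphisms, and then transport the result through the analytification functor using its compatibility with direct image along proper morphisms. The only difference is one of exposition: you spell out the purity verifications that the paper delegates to the first part of the proof of \cite[Thm.4.10]{DaMe15b}, and your closing caveats about tracking Tate twists in (\ref{APM}) are exactly the points that argument handles.
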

\begin{proof}
The analogous statement for algebraic mixed Hodge modules is proved as in the first part of the proof of \cite[Thm.4.10]{DaMe15b} in the category of algebraic mixed Hodge modules.  The point is to show that the $i$th cohomologically graded piece of both the left hand side and the right hand side are pure, and then use semisimplicity of the category of pure weight $i$ mixed Hodge modules to argue that if they have the same class in the Grothendieck group, they are isomorphic.  The equality is then provided by the Meinhardt--Reineke theorem (Theorem \ref{MRthm}).

We refer to \cite[Sec.4.7]{Hetal} for the required definition of the analytification functor for $\mathcal{D}$-modules.  The salient points are that analytification commutes with taking the de Rham functor (by definition), and with direct image along proper morphisms \cite[Prop.4.7.2]{Hetal}.  These facts, together with the projectivity of $q$ and $\oplus$, and approximation of $p_{\gamma}$ by proper maps $q^n_{\gamma}$, mean that the analytic versions of (\ref{MRiso}) and (\ref{MRisop}) follow by taking the analytification of the algebraic version.
\end{proof}
We recall that a \textit{Serre subcategory} $\mathcal{S}$ of an Abelian category $\mathcal{A}$ is a full subcategory satisfying the condition that for every short exact sequence
\[
0\rightarrow M'\rightarrow M\rightarrow M''\rightarrow 0
\]
in $\mathcal{A}$, $M'$ and $M''$ are objects of $\mathcal{S}$ if and only if $M$ is.

\begin{proposition}
\label{MRc2}
Let $W\in \CC Q_{\cyc}$ be an algebraic potential.  Let $\Msp(Q)_{\mathcal{S}}\subset \Msp(Q)$ be an inclusion of varieties corresponding to the inclusion of a Serre subcategory $\mathcal{S}$, satisfying the condition that for all $M\in \mathcal{S}$, the function $\Tr(W)$ is zero when evaluated on $M$.  For a cohomologically graded mixed Hodge module $\mathcal{F}$ on $\Msp(Q)$ we denote by $\mathcal{F}_{\mathcal{S}}$ the restriction to $\Msp(Q)_{\mathcal{S}}$.  Then there are isomorphisms of algebraic mixed Hodge modules on $\Msp(Q)^{\ana}$:
\begin{align}
\label{MRiso2}
\Ho \!\left(q^n_!\phim{\Tr(W)}\IC_{\Msp^{n\sframed}(Q)}\right)_{\mathcal{S}}=&\Sym_{\oplus}\left(\bigoplus_{\gamma\neq 0} (\phim{\Tr(W)}\IC'_{\Msp_{\gamma}(Q)})_{\mathcal{S}}\otimes \HO\left(\CC\PP^{n\lvert \gamma\lvert-1},\QQ\right)_{\vir}\right)
\end{align}
if $n$ is even, and
\begin{align}
\label{Sint}
\Ho \!\left(p_!\phim{\Tr(W)}\IC_{\Mst(Q)}\right)_{\mathcal{S}}=&\Sym_{\oplus}\left(\bigoplus_{\gamma\neq 0} (\phim{\Tr(W)}\IC'_{\Msp_{\gamma}(Q)})_{\mathcal{S}}\otimes \HO_c\left(\pt/\mathbb{C}^*,\QQ\right)_{\vir}\right).
\end{align}
Alternatively, let $W\in\CC \{Q\}_{\cyc}$ be an analytic potential, such that there exists an open analytic subscheme $U'_\gamma\subset \Msp_{\gamma}(Q)$, on which $\Tr(W)$ defines an absolutely convergent function $f_{\gamma}$ such that the reduced critical locus of $f_\gamma \pi_{\gamma}$ is the subvariety of nilpotent $\mathbb{C}Q$-modules.  Then there are isomorphisms of analytic mixed Hodge modules
\begin{align}
\label{MRiso2nilp}
\Ho \!\left(q^n_!\phim{\Tr(W)}\IC_{\Msp^{n\sframed}(Q)}\lvert_{\nilp}\right)=&\Sym_{\oplus}\left(\bigoplus_{\gamma\neq 0} \phim{\Tr(W)}\IC'_{\Msp_{\gamma}(Q)}\lvert_{\nilp}\otimes \HO(\CC\PP^{n\lvert \gamma\lvert-1},\QQ)_{\vir}\right)
\end{align}
if $n$ is even, and 
\begin{align}
 \label{integralityIso}
\Ho \!\left(p_!\phim{\Tr(W)}\IC_{\Mst(Q)}\lvert_{\nilp}\right)=&\Sym_{\oplus}\left(\bigoplus_{\gamma\neq 0} \phim{\Tr(W)}\IC'_{\Msp_{\gamma}(Q)}\lvert_{\nilp}\otimes \HO_c(\pt/\mathbb{C}^*,\QQ)_{\vir}\right).
\end{align}
\end{proposition}
\begin{proof}
The algebraic statement is proved as \cite[Thm.A]{DaMe15b}, using the commutativity of the vanishing cycles functor with direct image along proper maps, and the Thom--Sebastiani isomorphism.  The condition on $\mathcal{S}$ is needed to apply the Thom--Sebastiani isomorphism.

The statement involving analytic potentials follows from Proposition \ref{MRc1} via the same argument.  Fix a dimension vector $\gamma$, and let $U_\gamma'$ be as in the statement of the theorem, then by (\ref{MRiso}), there is an isomorphism
\begin{align}\label{hiat}
\phim{\Tr(W)}\Ho \!\left(q^n_!\IC_{\Msp^{n\sframed}(Q)}\right)\lvert_{U'_{\gamma}}\cong&\phim{\Tr(W)}\Sym_{\oplus}\left(\bigoplus_{\gamma\neq 0} \IC'_{\Msp_{\gamma}(Q)}\otimes \HO(\CC\PP^{n\lvert \gamma\lvert-1})_{\vir}\right)\lvert_{U'_{\gamma}}\\ 
\nonumber
\cong&\phim{\Tr(W)}\bigoplus_{\underline{\gamma}\in D(\gamma)}\left(\boxtimes_{\oplus,i} \Sym_{\oplus}^{\lambda_i}\left(\IC'_{\Msp_{\gamma_i}(Q)}\otimes \HO(\CC \mathbb{P}^{n\lvert \gamma_i\lvert-1})_{\vir}\right)\right)\lvert_{U'_{\gamma}}
\end{align}
The right hand side decomposes according to decompositions in
\[
D(\gamma):=\{(\gamma^1,\gamma^2,\ldots,\gamma^p,\lambda_1,\ldots,\lambda_p)\lvert\;\gamma^i\in\mathbb{N}^{Q_0},\;\lambda_i\in\mathbb{N},\;\gamma^i\neq \gamma^j \;\textrm{for} \; i\neq j,;\sum_i\lambda_i\gamma^i=\gamma\}.
\]
On the one hand, there is an isomorphism
\[
\phim{\Tr(W)}\Ho \!\left(q^n_!\IC_{\Msp^{n\sframed}(Q)}\right)\lvert_{U'_{\gamma}}\cong \Ho \!\left(q^n_!\phim{\Tr(W)}\IC_{\Msp^{n\sframed}(Q)}\lvert_{(q^n_{\gamma})^{-1}(U'_{\gamma})}\right)
\]
by properness of $q^n_{\gamma}$.  By our assumption on the spaces $U'_{\gamma}$, the mixed Hodge module $\phim{\Tr(W)}\IC_{\Msp^{n\sframed}(Q)}\lvert_{(q^n_{\gamma})^{-1}(U'_{\gamma})}$ is supported on the nilpotent locus.  It follows that the same is true of all summands on the right hand side of (\ref{hiat}).
We fix analytic open subspaces $U''_{\gamma^i}$ such that 
\[
\bigoplus\left(\prod_i\prod_{j\leq \lambda_i}U''_{\gamma^i}\right)\subset U_{\gamma}'
\]
where the direct sum is the map of analytic subspaces defined as in (\ref{smap}).  We shrink each $U''_{\gamma^i}$ to be contained in $U'_{\gamma^i}$ if necessary.  Then via the support conditions on the spaces $U'_{\gamma'}$, and the Thom--Sebastiani isomophism we write the summand corresponding to the decomposition $\underline{\gamma}$ as
\begin{align*}
&\phim{\Tr(W)}\left(\boxtimes_{\oplus,i} \Sym_{\oplus}^{\lambda_i}\left(\IC_{\Msp_{\gamma_i}(Q)}\lvert_{U''_{\gamma^i}}\otimes \HO(\CC \mathbb{P}^{n\lvert \gamma_i\lvert-1})_{\vir}\right)\right)\cong\\
&\left(\boxtimes_{\oplus,i} \Sym_{\oplus}^{\lambda_i}\left(\phim{\Tr(W)}\IC_{\Msp_{\gamma_i}(Q)}\lvert_{U''_{\gamma^i}}\otimes \HO(\CC \mathbb{P}^{n\lvert \gamma_i\lvert-1})_{\vir}\right)\right).
\end{align*}
\end{proof}
\begin{remark}
For algebraic Jacobi algebras, in this paper we will always take $\mathcal{S}$ to be the maximal Serre subcategory of $\mathbb{C}(Q,W)$-modules satisfying the conditions of Proposition \ref{MRc2}.  In other words, $M$ is an object of $\mathcal{S}$ if every subquotient of $M$ satisfies the condition that $\Tr(W)$ is zero when evaluated on $M$.
\end{remark} 
\subsection{Cohomological BPS invariants}

For the uninitiated, we should say something informal about why (\ref{integralityIso}) is a surprising and useful result.   On the one hand, the statement tells us something that we could already guess: that the left hand side is very large.  I.e. on the right hand side we take mixed Hodge modules $\phim{\Tr(W)}\IC'_{\Msp_{\gamma}(Q)}$ for each $\gamma$, restrict to the nilpotent locus, tensor them with $\HO(\pt/\mathbb{C}^*)_{\vir}$ to obtain something that is already infinite dimensional, and then we take the symmetric algebra generated by this infinite-dimensional object.  The surprising thing is that the gigantic complex of mixed Hodge modules $\Ho \!\left(p_!\phim{\Tr(W)}\IC_{\Mst(Q)}\lvert_{\nilp}\right)$ on the left hand side is determined in this elementary way by the much more manageable objects $\phim{\Tr(W)}\IC'_{\Msp_{\gamma}(Q)}$.

We should also say something about why the previous section goes by the name of ``integrality''.  The numerical DT invariants of quivers with potential are calculated by considering Euler characteristics of the hypercohomology of right hand side of \eqref{MRiso2} and are defined by a kind of inclusion-exclusion type formula that involves denominators \cite[Sec.7]{JS08}.  In particular, it is not clear at all that they should be integers.  Unwinding the definitions a little (see \cite[Sec.6.7]{DaMe4}), these numerical invariants are precisely equal to the hypercohomology of $(\phim{\Tr(W)}\IC'_{\Msp_{\gamma}(Q)})_{\mathcal{S}}$.  In particular, integrality of numerical DT invariants follows from \eqref{MRiso2}

These remarks motivate the following definition.
\begin{definition}
\label{BPSdef}
Let $(Q,W)$ be a quiver with algebraic potential.  We define the cohomological BPS invariants for $\CC(Q,W)$ by setting 
\[
\BPS_{\CC(Q,W),\gamma}:=\HO_c(\Msp_{\gamma}(Q)_{\mathcal{S}},\phim{\Tr(W)}\IC'_{\Msp_{\gamma}(Q)})^*,
\]
the dual compactly supported cohomology of the sheaf $\phim{\Tr(W)}\IC'_{\Msp^{\stable}_{\gamma}(Q)}$ restricted to the locus of points representing modules in $\mathcal{S}$.   If $(Q,W)$ is instead a quiver with analytic potential $W\in \CC\{Q\}_{\cyc}$ and finite dimensional Jacobi algebra $\CC\{Q,W\}$ we define
\[
\BPS_{\CC\{Q,W\},\gamma}:=\HO_c(\Msp_{\gamma}(Q),\phim{\Tr(W)}\IC'_{\Msp_{\gamma}(Q)}\lvert_{\nilp})^*.
\]
\end{definition}
For the relation between the above cohomological invariants and the motivic, or refined DT invariants studied in e.g. \cite{KS} see Appendix \ref{CoMo}.
\begin{remark}
\label{altDef}
Assuming that $\CC\{Q,W\}$ is finite-dimensional, by Proposition \ref{nilpProp}(2), the monodromic mixed Hodge module $\phim{\Tr(W)}\IC'_{\Msp_{\gamma}(Q)}\lvert_{\nilp}$ is a direct summand of $\phim{\Tr(W)}\IC_{U}$, for $U\subset  \Msp_{\gamma}(Q)$ an analytic open subspace on which $\Tr(W)$ converges.  By Verdier self duality of the intersection complex and the vanishing cycles functor, there is an isomorphism
\[
\BPS_{\CC\{Q,W\},\gamma}\cong\HO(\Msp_{\gamma}(Q),\phim{\Tr(W)}\IC'_{\Msp_{\gamma}(Q)}\lvert_{\nilp}).
\]
\end{remark}

If we assume that all $\CC(Q,W)$-modules are in $\mathcal{S}$, then as in Remark \ref{altDef}, we deduce that there is an isomorphism
\[
\BPS_{\CC(Q,W),\gamma}\cong\HO(\Msp_{\gamma}(Q),\phim{\Tr(W)}\IC'_{\Msp_{\gamma}(Q)}).
\]

\begin{proposition}
\label{simplesProp}
Let $\CC(Q,W)$ be a finite-dimensional algebraic Jacobi algebra.  Then the support of $\phim{\Tr(W)}\IC'_{\Msp_{\gamma}(Q)}$ (as a perverse sheaf) is zero-dimensional, and so it is also a constructible sheaf.
\end{proposition}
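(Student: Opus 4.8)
The plan is to bound the support of $\phim{\Tr(W)}\IC'_{\Msp_{\gamma}(Q)}$ by a critical locus and then to use finite-dimensionality of $A=\CC(Q,W)$ to show that this critical locus is a finite set of points. If there is no simple $\gamma$-dimensional $\CC Q$-module then $\IC'_{\Msp_{\gamma}(Q)}=0$ and there is nothing to prove, so throughout I may assume $\IC'_{\Msp_{\gamma}(Q)}=\IC_{\Msp_{\gamma}(Q)}$.

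First I would invoke the general support estimate for vanishing cycles: for a holomorphic function $g$ and a constructible complex $\mathcal{F}$ on a variety $V$, the complex $\phi_g\mathcal{F}$, and hence its monodromic upgrade $\phim{g}\mathcal{F}$, is supported on the locus where $g$ fails to be submersive relative to a stratification adapted to $\mathcal{F}$; for $\mathcal{F}=\IC_V$ this is contained in the stratified critical locus of $g$. Applying this with $V=\Msp_{\gamma}(Q)$ and $g=\Tr(W)$, the support of $\phim{\Tr(W)}\IC_{\Msp_{\gamma}(Q)}$ is contained in the critical locus of $\Tr(W)$ on $\Msp_{\gamma}(Q)$.

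The main obstacle is to identify this critical locus on the \emph{singular} coarse space. The function $\Tr(W)$ on $\Msp_{\gamma}(Q)$ is the descent of the $\Gl_{\gamma}$-invariant function $\Tr(W)$ on the smooth affine space $\AA_{Q,\gamma}$, whose honest critical locus $\crit(\Tr(W))$ is precisely the scheme of $\gamma$-dimensional modules over the Jacobi algebra $A=\CC(Q,W)$. I would transfer this downstairs by working étale- or analytic-locally around a point $x\in\Msp_{\gamma}(Q)$ via a Luna slice: this reduces the support computation to a vanishing-cycle computation on a smooth slice through the closed $\Gl_{\gamma}$-orbit of a semisimple module $M_x$, where the support of $\phi$ is the ordinary critical locus. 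The slice is critical for $\Tr(W)$ exactly when $M_x$ satisfies the Jacobi relations, i.e. when $M_x$ is an $A$-module. Hence the support is contained in the set of points of $\Msp_{\gamma}(Q)$ corresponding to semisimple $\CC Q$-modules that are also $A$-modules; since any $\CC Q$-submodule of such a module is an $A$-submodule, these are exactly the semisimple $A$-modules of dimension vector $\gamma$.

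Finally I would deduce finiteness from the hypothesis that $A$ is finite-dimensional. Being Artinian, $A$ has only finitely many isomorphism classes of simple modules $T_1,\dots,T_k$, with dimension vectors $\delta_1,\dots,\delta_k\in\NN^{Q_0}$; a semisimple $A$-module of dimension vector $\gamma$ is a sum $\bigoplus_j T_j^{\oplus n_j}$ with $\sum_j n_j\delta_j=\gamma$, and since each $\delta_j\neq 0$ the multiplicities $n_j$ are bounded, so there are only finitely many such tuples. Distinct such modules are non-isomorphic as $\CC Q$-modules and so yield distinct points of $\Msp_{\gamma}(Q)$; therefore the support is finite, hence zero-dimensional. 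A perverse sheaf with zero-dimensional support has $\mathcal{H}^j=0$ for $j\neq 0$ by the support and cosupport conditions of middle perversity, so it is a skyscraper, i.e. a constructible sheaf concentrated in cohomological degree zero, which is the assertion.
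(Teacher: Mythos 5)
Your proposal is correct, but it takes a genuinely different route from the paper's. The paper deduces the support statement in two lines from the Meinhardt--Reineke-type isomorphism (\ref{MRiso2}): on the \emph{smooth} framed moduli space $\Msp^{n\sframed}(Q)$ the support of $\phim{\Tr(W)}\IC_{\Msp^{n\sframed}(Q)}$ is the honest critical locus, i.e.\ the space of stable framed $\CC(Q,W)$-modules, and the $\Sym_{\oplus}$-decomposition then forces each summand $\phim{\Tr(W)}\IC'_{\Msp_{\gamma}(Q)}$ to be supported on semisimplifications of such modules, i.e.\ on semisimple $\CC(Q,W)$-modules; finiteness then follows exactly as in your last paragraph (finitely many simples, each occurring in the Jordan--H\"older filtration of $A$). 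You instead work directly on the singular coarse space via the micro-local support bound and Luna slices, which buys independence from the integrality machinery --- in particular your argument does not use the symmetry of $Q$, on which the framed-moduli theorem relies --- at the cost of one technical point your sketch glosses over: the phrase ``reduces to a vanishing-cycle computation on a smooth slice'' is not literally accurate, since the \'etale-local model at $x$ is the slice \emph{quotient} $N_x/\!\!/\Aut(M_x)$ rather than the smooth slice $N_x$, and the stratified-critical-locus bound degrades if one must refine the Luna stratification to a Whitney one (criticality along a smaller stratum is a weaker condition). This is patchable: $\Aut(M_x)$-invariance of $\Tr(W)$ forces its differential at $M_x$ to vanish on every nontrivial isotypic summand of the slice, so if $M_x\notin\crit(\Tr(W))$ upstairs then $\Tr(W)$ has nonzero derivative along the fixed subspace $N_x^{\Aut(M_x)}$, and after a local analytic change of coordinates it becomes a coordinate on the smooth factor of $N_x^{\Aut(M_x)}\times\bigl(N'_x/\!\!/\Aut(M_x)\bigr)$, whose vanishing cycles vanish; alternatively one can observe that the image of $\crit(\Tr(W))\subset\AA_{Q,\gamma}$ in $\Msp_{\gamma}(Q)$ is closed, being the image of a closed invariant subset under a GIT quotient. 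Your identification of the critical semisimple $\CC Q$-modules with semisimple $A$-modules, the Artinian counting argument, and the closing perversity/skyscraper observation all match the paper's intent.
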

\begin{proof}
We first claim that the left hand side of (\ref{MRiso2}) is supported on points corresponding to semisimple $\CC(Q,W)$-modules.  Since $\Msp^{n\sframed}(Q)$ is smooth, $\phim{\Tr(W)}\IC_{\Msp^{n\sframed}(Q)}$ is supported on the critical locus of $\Tr(W)$, which is equal to the space of stable framed $\mathbb{C}(Q,W)$-modules. The morphism $q$ takes a stable framed $\mathbb{C}(Q,W)$-module to the semisimplification of the underlying $\mathbb{C}(Q,W)$-module, so the right hand side is supported on the space of semisimple $\mathbb{C}(Q,W)$-modules, as claimed.

So in particular, $\phim{\Tr(W)}\IC_{\Msp_{\gamma}(Q)}$ is supported on points corresponding to semisimple $\CC(Q,W)$-modules.  For a given dimension vector $\gamma$, there are only finitely many of these, as there are finitely many simple $\CC(Q,W)$-modules (each one must occur in the Jordan--Holder filtration of $\CC(Q,W)$).
\end{proof}

We finish this subsection with some remarks about the BPS Lie algebra, defined for any algebraic Jacobi algebra in \cite{DaMe15b}.  We do not give full definitions, since it will turn out that this Lie algebra is rather trivial for finite-dimensional Jacobi algebras --- see Corollary \ref{ZB}, though see also Conjecture \ref{SRC} for a reason for not totally disregarding the cohomological Hall algebra structure that induces the Lie bracket on BPS cohomology.

In \cite{COHA} Kontsevich and Soibelman defined the cohomological Hall algebra $\mathcal{H}_{Q,W}$ associated to an (algebraic) quiver with potential.  In \cite{DaMe15b} it was shown that there is a perverse filtration on $\mathcal{H}_{Q,W}$, and in particular that $\BPS_{\CC(Q,W)}\otimes\LL^{1/2}$ is the first perverse piece of this filtration, and is preserved by the commutator bracket on $\mathcal{H}_{Q,W}$.  As such, the \textit{BPS Lie algebra} is defined to be
\begin{equation}
\label{BPSalgdef}
\mathfrak{g}_{\CC(Q,W)}^+:=\bigoplus_{\gamma>0}\BPS_{\CC(Q,W),\gamma}\otimes\LL^{1/2}.
\end{equation}
For $\CC\{Q,W\}$ a finite-dimensional analytic Jacobi algebra we can define the Lie algebra $\mathfrak{g}_{\CC\{ Q,W\}}$ the same way; the proof that it is a Lie algebra goes exactly the same way as in the algebraic case.
\subsection{Proof of Theorem \ref{ThmB}}
We can now prove our main theorem on Jacobi algebras.
\begin{theorem}
\label{Jacthm}
Let $A=\CC\{Q,W\}$ be an analytic Jacobi algebra, with $Q$ a symmetric quiver, and assume that $A$ is finite dimensional.  Then
\begin{enumerate}
\item
The refined BPS invariants $\omega_{A,\gamma}(z_1,z_2)$ and $\omega_{A,\gamma}(q^{1/2})$ have only nonnegative coefficients.
\item
For $\gamma\in\mathbb{N}^{Q_0}$ the BPS cohomology $\BPS_{A,\gamma}\in\Db(\MMHS)$ is concentrated entirely in cohomological degree zero, and is self-dual.  There is thus an equality $\omega_{A,\gamma}=\dim(\BPS_{A,\gamma})$.
\item
For all but finitely many $\gamma\in\mathbb{N}^{Q_0}$ the BPS invariant $\omega_{A,\gamma}$ vanishes.
\item
There is an equality $\dim(A)=\sum_{\gamma} \dim(\BPS_{A,\gamma}) \lvert \gamma\lvert^2$.
\end{enumerate}
Alternatively, let $A=\CC(Q,W)$ be a finite-dimensional algebraic Jacobi algebra, for a symmetric quiver $Q$.  Then all but the fourth assertions are true of the BPS invariants for $\CC(Q,W)$, with part (4) replaced by 
\begin{itemize}
\item[(4')]  
There is an inequality $\dim(\CC(Q,W))\geq \sum_{\gamma}\dim(\BPS_{\CC(Q,W),\gamma})\lvert\gamma\lvert^2$.
\end{itemize}

\end{theorem}
\begin{proof}
The first statement follows directly from the second, which we now prove.  By Proposition \ref{nilpProp}, assuming that it is nonzero, $\phi_{\Tr(W)}\IC'_{\Msp^{\stable}_{\gamma}(Q)}\lvert_{\nilp}$ is equal to $\phi_{\Tr(W)}\IC_{U}$ for some analytic open $U\subset \Msp^{\stable}_{\gamma}(Q)$ containing $0_{\gamma}$ on which $\Tr(W)$ converges.  Then the result follows from the facts that $\IC_{U}$ is Verdier self-dual, and the vanishing cycles functor commutes with the Verdier duality functor, both of which are functors on $\MMHM(\Msp^{\stable}_{\gamma}(Q))$, and not merely the derived category; in other words, they are t exact for the perverse t structure.

The argument for part (3) is a categorified version of the argument in \cite{HuTo18} for Proposition \ref{HTprop} below.  Consider the left hand side of (\ref{MRiso2nilp}), with $n=2$.  The mixed Hodge module $(\phim{\Tr(W)}\IC_{\Msp^{2\sframed}_{\gamma}(Q)})_{\nilp}$ is supported on the moduli space of $A$-modules equipped with a surjection of $A$-modules from $A^{\oplus 2}$, and in particular, it has empty support after restricting to $\Msp^{2\sframed}_{\gamma}(Q)$ for any $\gamma$ with $\gamma_i> 2\cdot \underline{\dim}(A)_i$ for some $i\in Q_0$.  Since in each degree $\gamma$ we are considering a complex of monodromic mixed Hodge modules supported at a point, we can treat both sides of (\ref{MRiso2nilp}) as $\mathbb{N}^{Q_0}$-graded and cohomologicaly graded monodromic mixed Hodge structures.  

Passing to the underlying vector spaces via the symmetric monoidal functor $\rforg$, in degree $2\cdot \underline{\dim}(A)$, the left hand side of (\ref{MRiso2nilp}) is the vanishing cycle cohomology of a function with an isolated singularity, and in particular it is nonzero, while on the right hand side we have a free exterior algebra on a $\mathbb{N}^{Q_0}$-graded set of generators, with $2 \omega_{A,\gamma}\lvert \gamma\lvert$ generators in degree $\gamma$ (here the $2\lvert\gamma \lvert$ factor is the dimension of the cohomology of $\CC\mathbb{P}^{(2\lvert\gamma\lvert-1)}$).  For such an exterior algebra, the highest degree in which the algebra is nonzero is given precisely by the degree of the top exterior power, which is given by the right hand side in (4).  This also establishes the first part of (3) --- a graded exterior algebra is finite-dimensional if and only if it is finitely generated.

We next consider algebraic Jacobi algebras.  By our standing assumption, $\mathcal{S}$ is the Serre subcategory of $\CC(Q,W)$-modules containing those $M$ satisfying the condition that for all submodules $M'\subset M$ and all quotient modules $M\rightarrow M''$, the function $\Tr(W)$ is zero when evaluated on $M'$ and $M''$.  In particular, $\Tr(W)$ is zero when evaluated on $M$.  The argument for (1) and (2) is the same, now using Proposition \ref{simplesProp} to argue as above that the BPS cohomology for $\CC(Q,W)$ is concentrated in degree zero.  

We next consider part (4').  Write $\{T_1,\ldots, T_r\}$ for the simple modules, counted with multiplicity, occurring in the Jordan--Holder filtration of $A$, numbered so that only $T_1,\ldots T_{r'}$ are in $\mathcal{S}$.  In particular
\[
\underline{\dim}(A)=\sum_{n=1}^r\underline{\dim}(T_n).
\]
Then the left hand side of (\ref{MRiso2}) is supported on semisimple modules $\bigoplus_{p\in P} T_{n_p}$ admitting a surjection from $A^{\oplus 2}$, with each $n_p$ less than or equal to $r'$.  It follows that it has zero support if
\begin{equation}
\label{reducedIn}
\gamma_i> 2\sum_{n=1}^{r'}\underline{\dim}(T_n)_i
\end{equation}
for some $i\in Q_0$.  So the underlying vector space of the right hand side of (\ref{MRiso2}) is an exterior algebra, with $2\lvert\gamma\lvert\omega_{\CC(Q,W),\gamma}$ generators in degree $\gamma$, that is zero when restricted to any dimension vector $\gamma$ satisfying the inequality (\ref{reducedIn}).  Parts (3) and (4') follow as in the analytic case.
\end{proof}

Setting $z_1=z_2=q^{1/2}=1$ we deduce the following
\begin{corollary}
Let $A$ be a finite-dimensional analytic or algebraic Jacobi algebra associated to a symmetric quiver with potential.  Then all but finitely many of the BPS invariants $\omega_{A,\gamma}$ vanish, and they are all positive.
\end{corollary}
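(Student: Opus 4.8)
The plan is to read off this corollary as the coarsest numerical shadow of Theorem~\ref{Jacthm}, obtained by specializing the refined invariants to their least refined realization. Recall from the diagram of refinements in Section~\ref{specSec} that $\wth(\mathcal{L},q^{1/2},q^{1/2})=\wtm(\mathcal{L},q^{1/2})$ and that $\wtm(\mathcal{L},1)=\chi(\mathcal{L})$, so setting $z_1=z_2=q^{1/2}=1$ collapses both refined invariants $\omega_{A,\gamma}(z_1,z_2)$ and $\omega_{A,\gamma}(q^{1/2})$ to the single numerical invariant $\omega_{A,\gamma}=\chi(\BPS_{A,\gamma})$. Thus the whole statement is a matter of controlling the integer $\chi(\BPS_{A,\gamma})$ uniformly in $\gamma$, which is exactly what the theorem already packages.

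First I would invoke part~(2) of Theorem~\ref{Jacthm}: each $\BPS_{A,\gamma}$ is concentrated in cohomological degree zero. For an object concentrated in degree zero the alternating sum defining $\chi$ has a single term, so $\chi(\BPS_{A,\gamma})=\dim(\BPS_{A,\gamma})=\dim(\rforg(\BPS_{A,\gamma}))$, which by construction is a nonnegative integer. Hence every $\omega_{A,\gamma}$ is a nonnegative integer, and it is strictly positive precisely when $\BPS_{A,\gamma}\neq 0$. The finiteness claim is then part~(3) of the theorem verbatim: $\omega_{A,\gamma}$ vanishes for all but finitely many $\gamma$, equivalently $\BPS_{A,\gamma}=0$ outside a finite set of dimension vectors. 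Since parts~(2) and~(3) are asserted in both the analytic and the algebraic settings, this argument needs no case distinction.

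There is no real obstacle at the level of the corollary; the substance all sits inside Theorem~\ref{Jacthm}. Concentration in degree zero is extracted there from the fact that the underlying perverse sheaf $\phim{\Tr(W)}\IC'_{\Msp_{\gamma}(Q)}\lvert_{\nilp}$ is Verdier self-dual and supported at finitely many points (Propositions~\ref{nilpProp} and~\ref{simplesProp}), while the finiteness of the nonvanishing locus comes from the integrality isomorphism~(\ref{MRiso2nilp}) together with the vanishing of the nilpotent locus of $\Msp^{2\sframed}_{\gamma}(Q)$ whenever some coordinate $\gamma_i$ exceeds $2\,\dimv(A)_i$. The one point worth flagging, and essentially the only thing the proof must say, is terminological: ``positive'' here means ``nonnegative, and strictly positive exactly when the cohomological invariant is nonzero,'' since the infinitely many vanishing invariants are genuinely $0$.
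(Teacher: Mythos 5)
Your proposal is correct and follows exactly the paper's route: the paper derives this corollary in one line by setting $z_1=z_2=q^{1/2}=1$ in Theorem \ref{Jacthm}, and your argument simply unpacks that specialization via parts (2) and (3), including the (accurate) caveat about what ``positive'' means here. No gap; your version is just a more explicit writing-out of the same deduction.
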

Recall that we call $W\in\CC Q_{\cyc}$ quasi-homogeneous if there is a grading of the arrows of $Q$ with nonnegative integers such that $W$ is homogeneous of degree $d>0$ with respect to the induced grading of $\CC Q_{\cyc}$.
\begin{corollary}
\label{pureCor}
Let $A=\CC(Q,W)$ be a finite-dimensional Jacobi algebra, for which $W$ is quasihomogeneous.  Then $\BPS_{\CC(Q,W),\gamma}$ is pure, and so the refined BPS invariants $\omega_{\gamma}(q^{1/2})$ belong to the subset $\mathbb{N}\subset\mathbb{Z}[q^{\pm 1/2}]$.
\end{corollary}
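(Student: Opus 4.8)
The plan is to reduce the statement to a single purity assertion, and then to supply that purity from the grading $\CC^*$-action. First I would note that the numerical conclusion is immediate once purity is known: by Theorem \ref{Jacthm}(2) the monodromic mixed Hodge structure $\BPS_{\CC(Q,W),\gamma}$ is concentrated in cohomological degree zero, and for a pure MMHS concentrated in degree zero the remark following the definition of $\wtm$ records that its weight polynomial $\wtm(-,q^{1/2})=\omega_{\gamma}(q^{1/2})$ lands in $\NN\subset\ZZ[q^{\pm 1/2}]$. So the entire content is to show that $\BPS_{\CC(Q,W),\gamma}$, equivalently the point-supported object $\phim{\Tr(W)}\IC'_{\Msp_{\gamma}(Q)}$ of Proposition \ref{simplesProp}, is \emph{pure} of weight zero.

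Next I would set up the grading action. Fixing a grading $Q_1\to\NN$, $a\mapsto w_a$, for which $W$ is homogeneous of degree $d>0$, one obtains a $\CC^*$-action on $\AA_{Q,\gamma}$ scaling the $a$-component of a representation by $t^{w_a}$. This action commutes with $\Gl_{\gamma}$, hence descends to the stack $\Mst_{\gamma}(Q)$ and to the affinization $\Msp_{\gamma}(Q)$, and $\Tr(W)$ is homogeneous of weight $d$ with respect to it. The same action lifts to the smooth approximating schemes $\tilde{Z}$ of Section \ref{MMHMsec} and to the spaces $\Msp^{n\sframed}_{\gamma}(Q)$ used to compute the BPS sheaves, compatibly with the proper maps $q^n_{\gamma}$ and with $\oplus$.

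The heart of the argument, and the step I expect to be the main obstacle, is the purity of vanishing cycles of a quasihomogeneous function: on a smooth variety $X$ equipped with a $\CC^*$-action under which a regular function $f$ on $X$ has strictly positive weight, the monodromic mixed Hodge module $\phim{f}\QQQ_X$ is pure, and so (after the weight-zero normalization built into $\IC$) is $\phim{f}\IC_X$. The mechanism is that the $\CC^*$-action forces the monodromy operator on vanishing cohomology to be of finite order, with eigenvalues $d$th roots of unity and semisimple action; this kills the weight-graded pieces off the diagonal. This is the quasihomogeneous purity already exploited in \cite{DaMe15b}, and I would either invoke it directly for the induced action on the $\tilde{Z}$ or reprove it via the weight/monodromy-degeneration argument on which it rests.

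Finally I would transport purity through the constructions of Section \ref{BPSsec}. Intersection complexes are pure by Saito's theory; proper pushforward along $q^n_{\gamma}$ and along $\oplus$ preserves purity by the decomposition theorem; restriction to the $\mathcal{S}$-locus (equivalently, by Proposition \ref{nilpProp}, the nilpotent locus) is perverse-exact on sheaves supported on $\crit(\Tr(W))$, exactly as in the proof of Theorem \ref{Jacthm}(2); and passing to the summand $\IC'$ preserves purity. Hence $\phim{\Tr(W)}\IC'_{\Msp_{\gamma}(Q)}$ is pure, its cohomology $\BPS_{\CC(Q,W),\gamma}$ is a pure MMHS, and being self-dual and concentrated in degree zero it is pure of weight zero. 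The membership $\omega_{\gamma}(q^{1/2})\in\NN$ then follows from the reduction of the first paragraph.
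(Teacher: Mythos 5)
Your proposal is correct and follows essentially the same route as the paper: the paper's proof also uses the $\CC^*$-action induced by the grading on $Q_1$ making $\Tr(W)$ equivariant, the zero-dimensional (hence proper, and by invariance origin-supported) support of $\phim{\Tr(W)}\IC'_{\Msp_{\gamma}(Q)}$ from Proposition \ref{simplesProp}, the equivariant purity theorem of \cite[Cor.3.2, Rem.3.4]{DMSS12} that you propose to invoke, and the degree-zero concentration to pin the weight at zero and conclude $\omega_{\gamma}(q^{1/2})\in\NN$. Your extra scaffolding (transporting purity through the framed spaces and proper maps $q^n_{\gamma}$) is unnecessary since the paper applies vanishing cycles directly to the pure $\IC$ of the singular coarse space, but it is harmless, and your semisimple-monodromy heuristic for the purity input matches the paper's own remark that purity is equivalent to semisimplicity of the monodromy.
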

\begin{proof}
Fix a degree $\gamma\in\mathbb{N}^{Q_0}$, and assume that there is a simple $\gamma$-dimensional $\mathbb{C}Q$-module, otherwise the statements are trivial (in degree $\gamma$).  The (shifted) mixed Hodge module $\mathcal{F}=\IC_{\Msp_{\gamma}(Q)}$ is a pure mixed Hodge module on $\Msp_{\gamma}(Q)$.  The function $\Tr(W)$ is $\mathbb{C}^*$-equivariant, where $\Msp_{\gamma}(Q)$ is given the $\mathbb{C}^*$-action determined by the grading on the arrows $Q_1$.  Furthermore, $\phim{\Tr(W)}\mathcal{F}$ has proper support by Proposition \ref{simplesProp}.  In fact by $\CC^*$-invariance, we see that it is supported at the origin.  Then purity follows by \cite[Cor.3.2, Rem.3.4]{DMSS12}.  The statement regarding the $q$-refined BPS invariants follows since $\phim{\Tr(W)}\IC_{\Msp_{\gamma}(Q)}$ is a monodromic mixed Hodge module (i.e. lives in cohomological degree zero) supported at the point $0_{\gamma}\in\Msp_\gamma(Q)$, a zero-dimensional subscheme, and so $\HO(\phim{\Tr(W)}\IC_{\Msp_{\gamma}(Q)})$ is concentrated in degree zero.  By the definition of purity for complexes, it is pure of weight zero.
\end{proof}

Under the assumptions of Corollary \ref{pureCor}, the monodromy action on $\forg(\BPS_{\CC(Q,W),\gamma})$ is semisimple, and we can write
\[
\forg(\BPS_{\CC(Q,W),\gamma})\cong \mathcal{G}^0\oplus\mathcal{G}^{\neq 0}
\]
where the monodromy action on the first factor is trivial, and on the second factor has no invariant subrepresentation.  Then via Proposition \ref{mSprop}, Corollary \ref{pureCor} implies that $\mathcal{G}^0$ is pure of weight zero, while $\mathcal{G}^{\neq 0}$ is pure of weight -1.

Since in the quasihomogeneous case the Serre subcategory $\mathcal{S}$ contains all $\CC(Q,W)$-modules, we deduce from the proof of Theorem \ref{Jacthm} the following
\begin{corollary}
Let $A=\CC(Q,W)$ be a finite-dimensional Jacobi algebra, with $W$ quasihomogeneous.  Then there is an equality $\dim(A)=\sum_{\gamma} \dim(\BPS_{A,\gamma}) \lvert \gamma\lvert^2$
\end{corollary}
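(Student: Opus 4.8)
The plan is to upgrade the inequality (4$'$) of Theorem \ref{Jacthm} to an equality by observing that quasihomogeneity forces the Serre subcategory $\mathcal{S}$ to be the entire category of $\CC(Q,W)$-modules. First I would record why: writing $d>0$ for the degree of $W$ and $w(a)\geq 0$ for the weights of the arrows, the induced function $\Tr(W)$ on each $\AA_{Q,\gamma}$ is homogeneous of degree $d$ for the corresponding $\GG_m$-action, so the Euler relation reads $d\cdot\Tr(W)=\sum_{a\in Q_1}w(a)\sum_{x}x\,\partial_x\Tr(W)$, the inner sum running over the matrix entries of $\rho(a)$. Every $\partial_x\Tr(W)$ vanishes on $\crit(\Tr(W))$, so $\Tr(W)$ vanishes identically on the critical locus, i.e.\ on every $\CC(Q,W)$-module. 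As the category of $\CC(Q,W)$-modules is closed under subquotients, $\Tr(W)$ vanishes on all subquotients as well, whence $\mathcal{S}$ contains every module.

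With $\mathcal{S}$ the full category I would simply rerun the proof of Theorem \ref{Jacthm}. In its notation the factors $T_1,\dots,T_{r'}$ lying in $\mathcal{S}$ now exhaust the Jordan--Hölder factors $T_1,\dots,T_r$ of $A$, so $\sum_{n=1}^{r'}\underline{\dim}(T_n)=\underline{\dim}(A)$ and the support bound (\ref{reducedIn}) sharpens to $\gamma_i>2\,\underline{\dim}(A)_i$, matching the analytic case exactly. Applying the symmetric monoidal functor $\rforg$ to the $n=2$ instance of (\ref{MRiso2}) (whose restriction to $\mathcal{S}$ is now trivial) presents the right-hand side as a free graded exterior algebra with $2\lvert\gamma\lvert\,\omega_{A,\gamma}$ generators in $\mathbb{N}^{Q_0}$-degree $\gamma$. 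Since every generator has nonnegative degree, the unique top nonzero graded piece is the top exterior power, sitting in degree $\sum_{\gamma}2\lvert\gamma\lvert\,\omega_{A,\gamma}\cdot\gamma$.

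It then remains to identify the top nonzero degree of the left-hand side as precisely $2\,\underline{\dim}(A)$. The sharpened support bound shows it vanishes whenever $\gamma_i>2\,\underline{\dim}(A)_i$ for some $i$, so I need nonvanishing in degree $\gamma=2\,\underline{\dim}(A)$. There the only stable framed $\CC(Q,W)$-module is $A^{\oplus 2}$ with its tautological framing, since any quotient of $A^{\oplus 2}$ of full dimension vector is $A^{\oplus 2}$ itself; thus the critical locus of $\Tr(W)$ on the smooth space $\Msp^{2\sframed}_{\gamma}(Q)$ is the single point $(A^{\oplus 2},\id)$. An isolated critical point has strictly positive Milnor number, so its vanishing cycle cohomology is nonzero, and this survives the proper pushforward along $q^2$. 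Equating the two descriptions of the top degree gives $\sum_{\gamma}2\lvert\gamma\lvert\,\omega_{A,\gamma}\cdot\gamma=2\,\underline{\dim}(A)$ in $\mathbb{N}^{Q_0}$, and applying $\lvert\cdot\lvert$ yields $\sum_{\gamma}\lvert\gamma\lvert^2\,\omega_{A,\gamma}=\dim(A)$, which is the asserted equality by Theorem \ref{Jacthm}(2).

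The only step requiring genuine input is the first paragraph: the Euler-relation observation that quasihomogeneity collapses $\mathcal{S}$ to the full module category, which is exactly what turns the inequality (4$'$) into the equality (4). Everything afterwards is the bookkeeping of the earlier proof, and the nonvanishing in top degree is the same isolated-singularity input already used in the analytic case, so I do not expect a further obstacle there.
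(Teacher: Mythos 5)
Your proposal is correct and takes essentially the same route as the paper: the paper's (one-line) proof likewise observes that quasihomogeneity forces the Serre subcategory $\mathcal{S}$ to contain all $\CC(Q,W)$-modules — the Euler-relation argument you spell out is exactly the implicit justification — and then deduces the equality by re-running the proof of Theorem \ref{Jacthm}(4) as in the analytic case. Your remaining paragraphs (sharpened support bound, top exterior power versus the isolated-singularity nonvanishing at $\gamma=2\,\underline{\dim}(A)$) reproduce that argument faithfully, with no gap.
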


We next consider the implications of Theorem \ref{Jacthm} for the BPS Lie algebra.
\begin{corollary}
\label{ZB}
Let $A$ be either an algebraic or analytic Jacobi algebra associated to a symmetric quiver, and assume that $A$ is finite dimensional.  Then the BPS Lie bracket on $\mathfrak{g}_{A}^+$ vanishes.
\end{corollary}
\begin{proof}
The BPS Lie bracket defined in \cite{DaMe15b} preserves the cohomological grading, since it is defined to be the commutator bracket in the cohomological Hall algebra of \cite{COHA}.  The right hand side of (\ref{BPSalgdef}) lies entirely in cohomological degree one, since by Theorem \ref{Jacthm} each $\BPS_{A,\gamma}$ lies in cohomological degree zero, and the half Tate twist shifts degree by one.  Since the Lie bracket of two degree one elements lands in degree two, we conclude that the bracket vanishes.
\end{proof}

\begin{example}
\label{oneLoopEx}
Let $Q$ be the quiver with one vertex and one loop, which we will denote $X$, and let $W=X^{d+1}$.  Then $A=\CC(Q,W)$ is nilpotent, so we have also $A\cong \CC\{Q,W\}$.  The motivic Donaldson--Thomas theory of this quiver and potential was considered in \cite{DM11}.  

There is an isomorphism $\CC(Q,W)\cong\CC[X]/X^d$, which is the contraction algebra of a width $d$, $(0,-2)$ curve (indeed this fact was subsequently used in \cite{DM12} to calculate the motivic DT invariants of $(0,-2)$ curves).  In particular, this Jacobi algebra is finite, and Theorem \ref{Jacthm} applies.  Since a $(0,-2)$ curve has length one, only the first Gopakumar--Vafa invariant of such a curve is nonzero.  In particular, we deduce from Theorem \ref{Jacthm} that 
\begin{equation}
\label{OLm}
\BPS_{A,n}=0
\end{equation}
for $n\geq 2$.  On the other hand, the first cohomological BPS invariant is by definition equal to
\begin{equation}
\label{OLe}
\BPS_{A,1}:=\HO(\mathbb{A}^1,\phim{X^{d+1}}\QQQ_{\mathbb{A}^1})\otimes\LL^{-1/2}.
\end{equation}
Taking classes in the Grothendieck group of monodromic mixed Hodge structures, equations (\ref{OLm}) and (\ref{OLe}) together provide an alternative proof of the Hodge-theoretic version of \cite[Thm.1.1]{DM11}.  Assertion (4) of Theorem \ref{Jacthm} becomes the equality
\[
\dim(\CC[X]/X^d)=\dim(\HO(\mathbb{A}^1,\phim{X^{d+1}}\QQQ_{\mathbb{A}^1})).
\]
\end{example}
\begin{example}
The unmodified part (4) (as opposed to (4')) of Theorem \ref{Jacthm} can fail for algebraic Jacobi algebras.  For instance, take the same quiver as Example \ref{oneLoopEx}, but let $W\in X^3\ZZ[X]$ be a nonhomogeneous polynomial, which we write as $W=X^{e+1}D$ for some non-constant polynomial $D$ with nonzero constant term.  Then 
\[
\BPS_{A,1}\cong \HO(\mathbb{A}^1,\phim{X^{e+1}}\underline{\mathbb{Q}}_{\mathbb{A}^1})\otimes\LL^{-1/2}
\]
and so 
\[
\dim(\BPS_{A,1})=e<\dim\left(\CC(Q,W)\right).
\]
\end{example}

\begin{example}
Consider the doubled $\textrm{A}_2$ quiver
\[
\xymatrix{
1\ar@/_1pc/[rr]^x&& 2\ar@/_1pc/[ll]_y
}
\]
with potential $W=(xy)^{d+1}$ where $d\geq 1$.  Then the (algebraic) Jacobi algebra $A=\CC(Q,W)\cong \CC\{Q,W\}$ is finite-dimensional, and so Theorem \ref{Jacthm} applies.  By considering moduli stacks of semistable representations for nondegenerate stability conditions it is not too hard to calculate the cohomological BPS invariants explicitly, and find that
\begin{align*}
\BPS_{A,(0,1)}\cong \BPS_{A,(1,0)}&\cong \mathbb{Q}\\
\BPS_{A,(1,1)}&=\HO(\mathbb{A}^1,\phim{X^{d+1}}\QQQ_{\mathbb{A}^1})\otimes\LL^{-1/2}\\
\BPS_{A,\gamma}&=0\;\;\;\;\;\;\textrm{for}\;\gamma\notin\{(1,0),(0,1),(1,1)\}.
\end{align*}
Then Theorem \ref{Jacthm}(4) gives
\[
\dim(A)=1+1+d\cdot 2^2=4d+2,
\]
which one can verify by writing down the Jacobi algebra directly.
\end{example}

\section{Background on contractible curves}
\label{curvesec}

\subsection{Coherent sheaves on flopping curves}
As in the introduction, we fix a threefold flopping contraction $f\colon X\rightarrow Y$, where $p\in Y$ is an isolated singularity in the affine variety $Y$, and $C=f^{-1}(p)$ is a thickened rational curve.  

Inside the category $\Coh(X)$ of coherent sheaves on $X$ we will mostly be concerned with $\Coh_{\cpct}(X)$, the full subcategory containing those sheaves with compact support.  Since $Y$ is assumed to be affine, the set-theoretic support of any such $\mathcal{J}$ is a union of points of $X$, and the exceptional curve $C$.  It follows that any such $\mathcal{J}$ is obtained by taking iterated extensions of sheaves of the form $\OO_{C_{\reduced}}(d)$ for $d\in\mathbb{Z}$ and $\mathcal{O}_x$ for $x\in X$.  We define $\rk(\mathcal{J})$ to be the number of sheaves of the first kind appearing in some (equivalently, any) such filtration, in other words the length of $\mathcal{J}$ at the generic point of $C$.  We define the morphism

\begin{align}\label{chern}
\mathrm{cl}\colon \KK(\Coh_{\cpct}(X))\rightarrow &\mathbb{Z}^2\\\nonumber
[\mathcal{J}]\mapsto &(\rk(\mathcal{J}),\chi(\mathcal{J})).
\end{align}

We fix a divisor $D\subset X$ with $D\cdot C_{\reduced}>0$, i.e. pick a relatively ample divisor for $p$.  Then we define the slope of a compactly supported sheaf $\mathcal{J}$ by 
\[
\mu(\mathcal{J}):=\frac{\chi(\mathcal{J})}{ D\cdot \ch_{2}(\mathcal{J})}=(D\cdot C_{\reduced})^{-1} \frac{\chi(\mathcal{J})}{\rk(\mathcal{J})}
\]
following the usual convention that a sheaf with zero-dimensional support is defined to have slope $\infty$.  A sheaf $\mathcal{J}\in \Coh_{\cpct}(X)$ is called semistable if $\mu(\mathcal{J}')\leq \mu(\mathcal{J})$ for all proper subsheaves $\mathcal{J}'\subset \mathcal{J}$.  Since this definition does not change if we scale $\mu$ by a fixed constant, we could have defined $\mu(\mathcal{J})$ to be $\chi(\mathcal{J})/\rk(\mathcal{J})$.

Let $I$ be the ideal sheaf of $C_{\reduced}$ inside the hypersurface $X_0$ from Section \ref{flopping_curves_sec}, and let $I^{(i)}$ be its $i$th symbolic power, i.e. the ideal of functions vanishing to order $i$ along $C_{\reduced}$.  Then $C_i$ is defined to be the subscheme of $X$ defined by $I^{(i)}$.  We have $C_{\reduced}=C_1$ and $C=C_l$, where $l$ is the length of $C$.  It is proved in \cite[Lem.3.2]{Katz08} that for all $i\leq l$ there are equalities $\dim(\HO^0(X, C_i))=1$ and $\HO^1(X,C_i)=0$.  

Let $\Coh_C(X)$ denote the full subcategory of coherent sheaves with set-theoretic support on $C_{\red}$.  This category is equivalent to the category of coherent sheaves on the formal completion $\hat{X}$ of $X$ along $C_{\red}$.  Inside $\Coh(\hat{X})$ we consider the relative Serre twisting sheaf $\OO_{\hat{X}}(D')$ where $D'$ is a divisor satisfying $D'\cdot C_{\red}=1$, and the Serre twist $\mathcal{F}(1)\coloneqq \mathcal{F}\otimes\OO_{\hat{X}}(D')$.  We abuse notation by continuing to write $\mathcal{F}(1)$ for $\mathcal{F}\in\Coh(X)$ a coherent sheaf with set-theoretic support in $C_{\red}$.

\subsection{Derived equivalences}
\label{DEsec}
Let $\mathcal{C}\subset \Db(\Coh (X))$ be the full subcategory consisting of objects $\mathcal{J}$ such that $f_*\mathcal{J}=0$ (we remind the reader of our convention that all functors are derived).  Following Bridgeland \cite{Flops}, inside the category $\Coh(X)$ we consider the torsion structure given by
\begin{align*}
\Torsion_{-1}=&\{\mathcal{J}\in \Coh(X) \textrm{ such that }\Ho^1(f_*\mathcal{J})=0\textrm{ and }\Hom(\mathcal{J},\mathcal{C})=0\}\\
\Tfree_{-1}=&\{\mathcal{J}\in\Coh(X)\textrm{ such that }\Ho^0(f_*\mathcal{J})=0\}.
\end{align*}
The Abelian subcategory ${}^{-1}\!\Per(X/Y)$ is defined to be the full subcategory of objects $\mathcal{J}\in \mathrm{Ob}(\Db(\Coh(X)))$ such that
\[
\Ho^{-1}(\mathcal{J})\in \Tfree_{-1},\quad\quad
\Ho^0(\mathcal{J})\in\Torsion_{-1},\quad\quad\Ho^{i\neq 0,1}(\mathcal{J})=0.
\]

Let $\mathcal{L}$ be ample relative to $p$, and let
\begin{equation}
\label{Ndef}
0\rightarrow \OO_X^{\oplus r}\rightarrow\mathcal{M}\rightarrow \mathcal{L}\rightarrow 0
\end{equation}
be the extension associated to a minimal generating set of $\HO^1(X,\mathcal{L}^{-1})$ as a $\HO^0(Y,\OO_Y)$-module, and set $\mathcal{P}=\OO_X\oplus\mathcal{M}$.
\begin{proposition}\cite[Thm.A]{FlopsAndNoncommRings}
\label{VdBequiv}
The vector bundle $\mathcal{P}$ is a projective generator for ${}^{-1}\!\Per(X/Y)$, and there is an equivalence of categories
\[
\Psi=\Hom_{Y}(\mathcal{P},-)\colon \Db(\Coh(X))\rightarrow \Db(A\rmod)
\]
where $A:=\End_X(\mathcal{P})$, inducing an equivalence of categories between ${}^{-1}\!\Per(X/Y)$ and $A\rmod$.
\end{proposition}

We denote by $\Phi=-\otimes{}_A\mathcal{P}$ the quasi-inverse to $\Psi$.  Write ${}^{-1}\!\Per_C(X/Y)$ for the category of perverse coherent sheaves set-theoretically supported on $C$.  As in \cite{FlopsAndNoncommRings} it is often useful to consider instead the category ${}^{-1}\!\Per(\hat{X}/\hat{Y})$ where $\hat{Y}$ is obtained by completing at the ideal of functions vanishing at $0$, and $\hat{X}$ is obtained by completing at the ideal sheaf $\mathcal{I}_{C}$.  The completion of modules then induces a functor ${}^{-1}\!\Per(X/Y)\rightarrow {}^{-1}\!\Per(\hat{X}/\hat{Y})$, which is fully faithful and preserves subobjects, when restricted to ${}^{-1}\!\Per_C(X/Y)$.  Then the following proposition follows from Proposition 3.5.7 in \cite{FlopsAndNoncommRings}.

\begin{proposition}\cite{FlopsAndNoncommRings}
The simple objects of ${}^{-1}\!\Per_C(X/Y)$ are 
\begin{align*}
S_1:=&\OO_C\\
S_2:=&\OO_{C_{\reduced}}(-1)[1].
\end{align*}
\end{proposition}
\begin{proposition}
\label{no_exotics}
Let $\mathcal{J}$ be a compactly supported coherent sheaf on $Y$ satisfying $\chi(\mathcal{J})=0$.  Then $\mathcal{J}$ is semistable if and only if it admits a filtration
\[
0=\mathcal{J}_0\subset\mathcal{J}_1\subset\ldots\subset \mathcal{J}_n=\mathcal{J}
\]
with each subquotient $\mathcal{J}_i/\mathcal{J}_{i-1}$ isomorphic to $\mathcal{O}_{C_{\reduced}}(-1)$.
\end{proposition}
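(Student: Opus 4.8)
The plan is to prove both implications, the reverse one (the existence of such a filtration implies semistability) being routine and the forward one carrying the content. First I would record that $\mathcal{O}_{C_{\reduced}}(-1)$ is itself stable of slope $0$: it is a line bundle on $C_{\reduced}\cong\PP^1$ with $\chi=0$, and any proper nonzero subsheaf is either a line subbundle $\mathcal{O}_{C_{\reduced}}(-1-m)$ with $m\geq 1$, of slope $-m<0$, or is zero-dimensional, of slope $+\infty$; the latter cannot occur since $\mathcal{O}_{C_{\reduced}}(-1)$ is pure. As the slope-$0$ semistable sheaves form a full abelian subcategory of $\Coh_{\cpct}(X)$ that is closed under extensions, any iterated extension of copies of $\mathcal{O}_{C_{\reduced}}(-1)$ is semistable of slope $0$, which is the reverse implication.

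For the forward implication I take $\mathcal{J}$ semistable with $\chi(\mathcal{J})=0$; I may assume $\mathcal{J}\neq 0$, so $\rk(\mathcal{J})\geq 1$ and $\mu(\mathcal{J})=0$. First I would show $\mathcal{J}$ is pure of dimension one, since a zero-dimensional subsheaf would have slope $+\infty>0=\mu(\mathcal{J})$, contradicting semistability; in particular $\mathcal{J}$ is set-theoretically supported on $C_{\reduced}$. The key step is then to prove $H^0(X,\mathcal{J})=0$. Suppose $s\colon\mathcal{O}_X\to\mathcal{J}$ is a nonzero section; its image is a quotient $\mathcal{O}_Z$ of $\mathcal{O}_X$, and as a subsheaf of the pure one-dimensional sheaf $\mathcal{J}$ it is itself pure of dimension one, so $\rk(\mathcal{O}_Z)\geq 1$. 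From $0\to I_Z\to\mathcal{O}_X\to\mathcal{O}_Z\to 0$, the vanishing $R^1f_*\mathcal{O}_X=0$ (rational singularities) together with $R^{\geq 2}f_*=0$ (the fibres of $f$ are one-dimensional) forces $R^1f_*\mathcal{O}_Z=0$, whence $\chi(\mathcal{O}_Z)=h^0(X,\mathcal{O}_Z)\geq 1$ and $\mu(\mathcal{O}_Z)>0$. This contradicts semistability, so no nonzero section exists and $R^0f_*\mathcal{J}=H^0(X,\mathcal{J})=0$ (and then $\chi(\mathcal{J})=0$ gives $R^1f_*\mathcal{J}=0$ as well).

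With $R^0f_*\mathcal{J}=0$ in hand, $\mathcal{J}$ lies in the torsion-free class of the torsion pair underlying the perverse heart, so the shift $\mathcal{J}[1]$ is an object of ${}^0\!\Per_C(X/Y)$. This heart is a finite-length category with the two simple objects $S_1=\mathcal{O}_C$ and $S_2=\mathcal{O}_{C_{\reduced}}(-1)[1]$, whose classes under the isomorphism $\ch\colon\KK(\Coh_{\cpct}(X))\xrightarrow{\sim}\ZZ^2$ are $(l,1)$ and $(-1,0)$. Writing the composition series of $\mathcal{J}[1]$ as $a$ copies of $S_1$ and $b$ copies of $S_2$, the identity $(al-b,a)=[\mathcal{J}[1]]=(-\rk(\mathcal{J}),0)$ forces $a=0$ and $b=\rk(\mathcal{J})$; in particular the simple $\mathcal{O}_C$ cannot occur. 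Thus $\mathcal{J}[1]$ is an iterated extension of $S_2$ in ${}^0\!\Per_C(X/Y)$; since $\Ext^1$ in the heart between copies of $S_2$ coincides with $\Ext^1_X(\mathcal{O}_{C_{\reduced}}(-1),\mathcal{O}_{C_{\reduced}}(-1))$, every such extension has the form $\mathcal{K}[1]$ with $\mathcal{K}$ an iterated extension of $\mathcal{O}_{C_{\reduced}}(-1)$ in $\Coh(X)$, and desuspending yields the required filtration of $\mathcal{J}$.

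The hard part is the middle step: converting the purely numerical hypothesis $\chi(\mathcal{J})=0$ into the geometric statement that $\mathcal{J}$, suitably shifted, lands in the perverse heart. The delicate point is that $\mathcal{O}_{C_{\reduced}}(-1)$ sits exactly on the boundary of the torsion pair (its slope equals that of $\mathcal{J}$), so slopes alone will not separate it from $\mathcal{J}$; the essential input is the pushforward computation $R^1f_*\mathcal{O}_Z=0$, which rules out global sections and thereby places $\mathcal{J}$ in the torsion-free class. Once membership in the perverse heart is secured, the two-dimensionality of $\KK(\Coh_{\cpct}(X))$ renders the remainder purely arithmetic, and in particular automatically forbids $\mathcal{O}_C$ from appearing among the composition factors.
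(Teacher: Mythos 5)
Your proposal is correct, and its overall architecture is the same as the paper's: both proofs move $\mathcal{J}[1]$ into the perverse heart ${}^0\!\Per_C(X/Y)$, invoke the fact that the only simple objects there are $S_1=\OO_C$ and $S_2=\OO_{C_{\reduced}}(-1)[1]$, kill the $S_1$-multiplicity by the class computation in $\KK(\Coh_{\cpct}(X))\cong\ZZ^2$ (the paper phrases this as: $\chi(\Phi(M))=0$ forces every Jordan--H\"older factor to be $S_2$, using $\chi(\OO_C)=1$), and desuspend the resulting filtration. Where you genuinely diverge is the pivotal step, heart membership. The paper works on the module side: it writes $\Psi(\mathcal{J})$ as a two-term complex $M_0\xrightarrow{d}M_1$ and proves $d$ injective, since a nonzero kernel would contain a simple submodule, producing either a map $\OO_{C_{\reduced}}(-1)[1]\to\mathcal{J}$ (zero for degree reasons) or a nonzero map $\OO_C\to\mathcal{J}$ (excluded by semistability and $\chi(\OO_C)=1$). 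You instead prove $f_*\mathcal{J}=0$ geometrically, via $R^1f_*\OO_X=0$ (rationality of the cDV point, equivalently Grauert--Riemenschneider plus crepancy) and $R^{\geq 2}f_*=0$, forcing $R^1f_*\OO_Z=0$ and hence $\chi(\OO_Z)=h^0>0$ for the image of any section. This is a worthwhile variant: in fact your computation is essentially what is needed to make rigorous the paper's terse parenthetical killing $\OO_C\to\mathcal{J}$, since the image of such a map is only a \emph{quotient} of $\OO_C$, and one must argue (e.g.\ by right-exactness of $R^1f_*$ on sheaves supported on the fibre, or by your argument) that this image has positive Euler characteristic.

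One step deserves more care than you give it. You pass from $f_*\mathcal{J}=0$ to membership of $\mathcal{J}$ in the torsion-free class, but the definition of $\Tfree_0$ displayed in Section \ref{DEsec} carries a second condition, $\Hom(\mathcal{C},\mathcal{J})=0$, which you neither verify nor mention --- and which, as literally stated, \emph{fails} for the sheaves in question: $\OO_{C_{\reduced}}(-1)$ itself lies in $\mathcal{C}$ (its derived pushforward vanishes), so any $\mathcal{J}$ containing $\OO_{C_{\reduced}}(-1)$ as a subsheaf admits nonzero maps from $\mathcal{C}$. The same observation shows that the displayed definition cannot be the intended one, since it would exclude $S_2=\OO_{C_{\reduced}}(-1)[1]$ from the heart, contradicting the quoted result of Van den Bergh: in the correct Bridgeland--Van den Bergh formulation of this perversity the Hom-vanishing clause sits on the \emph{torsion} side, and for sheaves supported on the exceptional fibre the torsion-free class is cut out by $f_*F=0$ alone. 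This last fact is not automatic, but it follows from exactly the paper's mechanism: if $\Hom(\mathcal{P},F)\neq 0$ then the nilpotent module $\Hom(\mathcal{P},F)$ contains a simple submodule, giving a nonzero map $\OO_{C_{\reduced}}(-1)[1]\to F$ (impossible in degree $0$) or $\OO_C\to F$, which yields a nonzero global section, forbidden by your pushforward computation. So your inference is correct, but you should either justify it along these lines or route around the torsion pair entirely as the paper does; as written, your proof silently uses only half of the paper's stated membership criterion.
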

\begin{proof}
The if part follows from the closure of the category of semistable objects of fixed slope under taking extensions, so we focus on the only if part.  A compactly supported sheaf $\mathcal{J}$ decomposes as a direct sum $\mathcal{J}=\mathcal{J}_0\oplus \mathcal{J}'$ where $\mathcal{J}_0$ is set theoretically supported on $C$, and $\mathcal{J}'$ is set-theoretically supported away from $C$.  Then $\chi(\mathcal{J}')$ is the length of $\mathcal{J}'$, and is in particular nonzero if $\mathcal{J}'$ is.  So by semistability, $\mathcal{J}$ is set-theoretically supported on $C$.

We may write $\Psi(\mathcal{J})\cong [\ldots\rightarrow 0\rightarrow M_0\xrightarrow{d} M_1\rightarrow 0\rightarrow \ldots]$, where $\ker(d)\in \Psi(F_{-1})$ and $\coker(d)\in \Psi(T_{-1})$ are nilpotent modules.  If $d$ is not injective, there must be an inclusion $S_i\hookrightarrow \ker(d)$ for one of $i=1,2$, giving rise either to a nonzero map $\OO_{C_{\reduced}}(-1)[1] \rightarrow \mathcal{J}$ or a nonzero map $\OO_{C}\rightarrow \mathcal{J}$.  The first is not possible since $\mathcal{J}$ is a coherent sheaf, while the second is ruled out by semistability and $\chi(\OO_C)=1$.  So we deduce that $\Psi(\mathcal{J})\cong M[-1]$ for some nilpotent $A$-module.  Since $M$ is nilpotent it is obtained by iterated extensions from the two modules $S_1$ and $S_2$.  Since $\chi(\Phi(M))=0$, it follows that $M$ is obtained by taking iterated extensions of the simple $S_2=\Psi(\OO_{C_{\reduced}}(-1)[1])$, and in particular admits a filtration
\[
0=M_0\subset M_1\subset\ldots\subset M_n=M
\]
where each subquotient is isomorphic to $S_2$.  Applying $\Phi[-1]$ to this filtration, we obtain the desired filtration for $\mathcal{J}$.
\end{proof}

\subsection{Moduli spaces of semistable coherent sheaves}
The construction and basic properties of moduli spaces of semistable shaves on algebraic varieties is treated generally in \cite{HL}.  In the case of sheaves on contractible curves, these constructions are somewhat simplified.
\begin{lemma}
Let $\mathcal{J}_1$ and $\mathcal{J}_2$ be coherent sheaves on an algebraic variety, generated by global sections, with $\HO^1(X,\mathcal{J}_i)=0$ for $i=1,2$, and let 
\[
0\rightarrow \mathcal{J}_1\rightarrow \mathcal{J}\rightarrow \mathcal{J}_2\rightarrow 0
\]
be a short exact sequence.  Then $\mathcal{J}$ is generated by global sections and satisfies $\HO^1(X,\mathcal{J})=0$.
\end{lemma}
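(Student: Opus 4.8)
The plan is to extract everything from the long exact sequence in sheaf cohomology attached to the given short exact sequence, supplemented by a single diagram chase comparing the evaluation-of-global-sections maps.

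For the vanishing statement I would simply write down the relevant fragment of the long exact sequence,
\[
\cdots\rightarrow \HO^1(X,\mathcal{J}_1)\rightarrow \HO^1(X,\mathcal{J})\rightarrow \HO^1(X,\mathcal{J}_2)\rightarrow\cdots
\]
Both outer groups vanish by hypothesis, so $\HO^1(X,\mathcal{J})$ is squeezed to zero. This part is immediate.

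For global generation the key preliminary observation is that, because $\HO^1(X,\mathcal{J}_1)=0$, the connecting map vanishes and the induced sequence of global sections
\[
0\rightarrow \HO^0(X,\mathcal{J}_1)\rightarrow \HO^0(X,\mathcal{J})\rightarrow \HO^0(X,\mathcal{J}_2)\rightarrow 0
\]
is short exact. Tensoring this exact sequence of vector spaces with $\OO_X$ over the base field preserves exactness (each $\HO^0(X,\mathcal{J}_i)$ is a vector space, hence flat, so this can be checked stalkwise where it reduces to the exactness of the original sequence). This produces a commutative ladder whose top row is
\[
0\rightarrow \HO^0(X,\mathcal{J}_1)\otimes\OO_X\rightarrow \HO^0(X,\mathcal{J})\otimes\OO_X\rightarrow \HO^0(X,\mathcal{J}_2)\otimes\OO_X\rightarrow 0
\]
and whose bottom row is the given short exact sequence, with vertical arrows the three evaluation maps $\mathrm{ev}_1,\mathrm{ev},\mathrm{ev}_2$. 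The outer evaluation maps $\mathrm{ev}_1$ and $\mathrm{ev}_2$ are surjective precisely because $\mathcal{J}_1$ and $\mathcal{J}_2$ are generated by global sections, and the snake lemma (equivalently, the surjectivity form of the four lemma) applied to this ladder then forces $\mathrm{ev}$ to be surjective, which is exactly the assertion that $\mathcal{J}$ is generated by global sections.

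There is no genuine obstacle here; the only points requiring care are bookkeeping ones. First, one must notice that the surjectivity of $\HO^0(X,\mathcal{J})\rightarrow \HO^0(X,\mathcal{J}_2)$, equivalently the exactness of the top row of the ladder, is exactly where the hypothesis $\HO^1(X,\mathcal{J}_1)=0$ is consumed. Second, since global generation is a stalk-local condition, the concluding argument should be read on stalks: given a local section of $\mathcal{J}$ near a point, push it to $\mathcal{J}_2$, lift it along $\mathrm{ev}_2$ and the surjective top row, and correct the difference by a section coming from $\mathcal{J}_1$ via $\mathrm{ev}_1$.
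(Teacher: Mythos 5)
Your proposal is correct and is essentially the paper's own argument: the paper's proof is exactly the diagram chase on the morphism from the sequence of evaluation maps $\HO(X,\mathcal{J}_i)\otimes\OO_X\rightarrow\mathcal{J}_i$ to the given short exact sequence, which you have simply written out in full, with the long exact cohomology sequence supplying both the vanishing of $\HO^1(X,\mathcal{J})$ and the exactness of the top row. The only cosmetic difference is that the paper phrases the ladder as a morphism of distinguished triangles, whereas you work at the level of $\HO^0$ and $\HO^1$ separately; the content is identical.
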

\begin{proof}
This is a diagram chase, using the morphism of distinguished triangles
\[
\xymatrix{
\HO(X,\mathcal{J}_1)\otimes \OO_X\ar[r]\ar[d]&\HO(X,\mathcal{J})\otimes\OO_X\ar[r]\ar[d]&\HO(X,\mathcal{J}_2)\otimes\OO_X\ar[d]\ar[r]&\\
\mathcal{J}_1\ar[r]&\mathcal{J}\ar[r]&\mathcal{J}_2\ar[r]&.
}
\]
\end{proof}
Combining with Proposition \ref{no_exotics}, we deduce the following
\begin{corollary}
Let $X\rightarrow Y$ be a flopping curve contraction.  Let $\mathcal{J}$ be a semistable coherent sheaf on $X$ with $\chi(\mathcal{J})=0$.  Then $\mathcal{J}$ is set-theoretically supported on $C_{\red}$, the sheaf $\mathcal{J}(1)$ is generated by global sections, and $\dim(\Hom(\OO_{X},\mathcal{J}(1)))=\rk(\mathcal{J})$.
\end{corollary}
Now fix $r$, and a $r$-dimensional vector space $\mathbb{V}$.  The Grothendieck quot scheme $\Quot(\mathbb{V},(r,r))$ is defined to be the fine moduli scheme parameterising quotients
\[
h'\colon \OO_X\otimes_{\CC} \mathbb{V}\twoheadrightarrow \mathcal{J}'
\]
where $\chi(\mathcal{J}')=r=\rk(\mathcal{J}')$.  We consider the open subscheme $\Quot^{\circ}(\mathbb{V},(r,r))\subset \Quot(\mathbb{V},(r,r))$ given by the condition that the target sheaf is semistable, and $\HO^0(h')$ is surjective.  Note that semistability forces the set-theoretic support to lie in $C_{\red}$, for otherwise $\mathcal{J}'$ splits as a direct sum, with one summand of infinite slope.  Applying the relative Serre twist, $\Quot^{\circ}(\mathbb{V},(r,0))$ is isomorphic to the stack of pairs $(\mathcal{J},h)$, where $\mathcal{J}=\mathcal{J}'(1)$ is a semistable sheaf with $\rk(\mathcal{J})=r$ and $\chi(\mathcal{J})=0$, and $h'\lvert_{\mathbb{V}}$ is a choice of basis for $\HO^0(X,\mathcal{J}(1))$, where we embed $\mathbb{V}\subset \mathcal{O}_X\otimes_{\mathbb{C}} \mathbb{V}$ via the embedding $\mathbb{C}\subset \mathcal{O}_X$ given by the constant functions.  

Letting $\Gl_r(\CC)$ act on $h'$ via precomposition, we obtain the isomorphism
\[
\Mst_{r,0}^{\sst}(X)\cong \Quot^{\circ}(\mathbb{V},(r,0))/\Gl_r(\CC),
\]
i.e. the moduli stack of sheaves is a global quotient stack.  The points of the coarse moduli space $\Msp_{r,0}^{\sst}(X)$ are given by S-equivalence classes of sheaves, or equivalently, polystable sheaves.  In particular, by Proposition \ref{no_exotics} we deduce that $\left(\Msp_{r,0}^{\sst}(X)\right)_{\reduced}\cong \pt$.  Likewise, the Chow variety $\Chow_X(r[C_{\reduced}])$ is isomorphic to a point, and the natural map 
\begin{equation}
\label{chowiso}
\pt=\left(\Msp_{r,0}^{\sst}(X)\right)_{\reduced}\rightarrow \Chow_X(r[C_{\reduced}])=\pt
\end{equation}
is obviously an isomorphism.

If we fix the Euler characteristic to be one, the geometry becomes even more well-behaved.  The following theorem is due to Sheldon Katz.
\begin{theorem}\cite[Proof of Prop 3.3]{Katz08}
\label{Katzprop}
Let $\Msp_{r,1}^{\sst}(X)$ be the \textit{fine} moduli space of stable one-dimensional sheaves $\mathcal{J}$ with $\chi(\mathcal{J})=1$ and $\rk(\mathcal{J})=r$.  Then for $r\leq l$, $\Msp_{r,1}^{\sst}(X)$ has exactly one closed point, corresponding to the sheaf $\OO_{C_r}$, otherwise it is empty.  The length of the structure sheaf $\mathcal{O}_{\Msp_{r,1}^{\sst}(X)}$ is $n_{C,r}$.
\end{theorem}

\subsection{The contraction algebra}
\label{CAsec}
Before reminding the reader of the definition of the contraction algebra from \cite{DW16}, we recall the type of deformation functor that it represents.  Let $\Art_1$ be the category of pointed finite-dimensional algebras, i.e. finite-dimensional $\mathbb{C}$-algebras $\Gamma$ equipped with a retraction of algebras $p\colon\Gamma\rightarrow\CC$, such that $\ker(p)^N=0$ for $N\gg 0$.  Let $\mathcal{A}$ either be $\QCoh(U)$ for $U$ a quasi-projective scheme $\CC$-scheme, or $\Mod(\Lambda)$ for some algebra $\Lambda$.  Let $a\in\mathrm{ob}(\mathcal{A})$.  Then the noncommutative deformation functor $\Def_a^{\mathcal{A}}$ is defined to be the functor taking an element $(\Gamma,p)$ of $\Art_1$ to the set of isomorphism classes of triples $(b,\tau, \delta)$ where $b\in\mathcal{A}$, $\tau:\Gamma\rightarrow \Hom_{\mathcal{A}}(b)$ is a homomorphism of algebras, and $\delta\colon(\Gamma/\ker(p))\otimes_{\Gamma}b\rightarrow a$ is an isomorphism, such that $-\otimes_{\Gamma} b\colon \Gamma\rmod \rightarrow \mathcal{A}$ is exact.  See \cite{DW16} for details regarding the notion of isomorphism of deformations, and \cite{La02,Er07,Seg08,KSDef} for more treatment of noncommutative deformation theory.

Write $\hat{R}$ for the ring of functions $\Gamma(Y)$ completed at the ideal $\mathcal{I}_0$ corresponding to the singular point $0$.  Let $\mathcal{L}'$ be a line bundle on $\hat{X}$ such that $\mathcal{L}'\cdot C_{\reduced}=1$, and define $\mathcal{M}'$ with respect to the short exact sequence 
\[
0\rightarrow \OO_{\hat{X}}^{\oplus r'}\rightarrow\mathcal{M}'\rightarrow\mathcal{L}' \rightarrow 0
\]
analogous to (\ref{Ndef}), and define $\mathcal{P}'=\OO_{\hat{X}}\oplus\mathcal{M}'$ and $\hat{A}:=\End_{\hat{X}}(\mathcal{P}')=\End_{\hat{Y}}(\hat{R}\oplus M)$ where $M=p_*\mathcal{M}'$.  Van den Bergh shows in \cite[Sec.3.4]{FlopsAndNoncommRings} that by picking such an $\mathcal{L}'$, and feeding it back into \cite[Sec.3.2]{FlopsAndNoncommRings} we get an equivalence of categories between ${}^{-1}\!\Per(\hat{X}/\hat{Y})$ and $\hat{A}\rmod$ as in Proposition \ref{VdBequiv}.
\begin{definition}\cite{DW16}
The \textit{contraction algebra} $A_{\con}$ is the quotient $\hat{A}/I$, where $I$ is the two sided ideal of endomorphisms of $\mathcal{P}'$ factoring through the summand $\hat{R}$.
\end{definition}
Defining $S=\Hom_{\hat{X}}(\mathcal{M}',\OO_{C_{\reduced}})$ we obtain a simple $A_{\con}$-module, which is $S_2$ when considered as a $\hat{A}$-module via the surjection $\hat{A}\rightarrow A_{\con}$.

\begin{theorem}\cite{DW16}
\label{DWthm}
The contraction algebra $A_{\con}$ is finite-dimensional, and represents the isomorphic deformation functors $\Def^{\QCoh(Y)}_{\OO_{C_{\reduced}}}$, $\Def^{A_{\con}\rmod}_{S}$ and $\Def^{\hat{A}\rmod}_{S_2}$.
\end{theorem}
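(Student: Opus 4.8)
The plan is to transport all three deformation functors to a single algebraic model via Van den Bergh's equivalence, to establish representability there, and to deduce finite-dimensionality separately from the contraction geometry. First I would compare the geometric functor with the algebraic one: by the completed form of Proposition \ref{VdBequiv}, the heart ${}^0\!\Per(\hat X/\hat Y)$ is identified with $\hat A\rmod$, carrying the perverse simple $S_2 = \OO_{C_{\reduced}}(-1)[1]$ to the simple $\hat A$-module of the same name. Since noncommutative deformation functors are intrinsic to an abelian category and are transported across equivalences, and since they are unchanged by a cohomological shift and by twisting with the autoequivalence $-\otimes\OO_X(-1)$, this would yield $\Def^{\QCoh(Y)}_{\OO_{C_{\reduced}}} \cong \Def^{\hat A\rmod}_{S_2}$. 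The point needing care is the passage between $\QCoh(Y)$, $\Coh(X)$ and the completion $\hat X$: because every infinitesimal deformation of $\OO_{C_{\reduced}}$ is set-theoretically supported on $C_{\reduced}$, hence scheme-theoretically on an arbitrarily small formal neighbourhood, the functors computed on $Y$, on $X$ and on $\hat X$ coincide, using that $p$ is an isomorphism away from $0$, that $Rp_*\OO_X = \OO_Y$, and the vanishing $\HO^1(X, C_i) = 0$ recalled above, which keeps the deformed families in degree zero under $p_*$.

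Next I would pass to the contraction algebra. The ideal $I\subset\hat A$ of endomorphisms factoring through the summand $\hat R$ annihilates $S_2$, and since any deformation over $(\Gamma, p)\in\Art_1$ is, by nilpotence of $\ker(p)$, an iterated self-extension of $S_2$, the ideal $I$ also acts by zero on the whole family; hence every deformation of $S_2$ over $\hat A$ factors through $A_{\con} = \hat A/I$, giving $\Def^{\hat A\rmod}_{S_2} \cong \Def^{A_{\con}\rmod}_{S}$. It then remains to show that $A_{\con}$ represents $\Def^{A_{\con}\rmod}_{S}$. As the exceptional fibre is a single irreducible curve, $A_{\con}$ is a basic connected algebra whose unique simple module is $S$, that is, a local algebra with residue field $\CC$; applying the representability theorem for noncommutative deformations of a simple module, it suffices to identify the prorepresenting deformation algebra with $A_{\con}$ itself, which one reads off from the presentation $A_{\con} = \hat A/I$ together with the tangent space $\Ext^1_{A_{\con}}(S,S)$ and its higher Massey-product obstructions.

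The main obstacle is finite-dimensionality of $A_{\con}$, which is also what promotes ``prorepresents'' to ``represents'', since a finite-dimensional local algebra already lies in $\Art_1$. This is the only genuinely non-formal step: writing $A_{\con} = \End_{\hat Y}(\hat R\oplus N)/I$, one must show that the surviving endomorphisms, namely those not factoring through $\hat R = \OO_{\hat Y}$, form an $\hat R$-module of finite length supported only at the singular point $0$. This is exactly where the flopping hypothesis is used, as finiteness of $A_{\con}$ is equivalent to the contractibility of $C_{\reduced}$; I would deduce it from crepancy of $p$ and the one-dimensionality of the fibre over $0$, the remaining assertions being transport of structure along an equivalence together with the standard deformation-theoretic representability.
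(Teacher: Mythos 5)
First, note that the paper offers no proof of this statement at all: it is quoted verbatim from \cite{DW16}, so your attempt can only be measured against the Donovan--Wemyss argument. Your overall architecture is in fact the right one and matches theirs (transport along the completed Van den Bergh equivalence to reduce to $\Def^{\hat{A}\rmod}_{S_2}$, pass to the quotient $A_{\con}=\hat{A}/I$, invoke representability for deformations of a simple, and prove finiteness by a separate support argument). But two steps are genuinely broken as written. The first is your passage from $\Def^{\hat{A}\rmod}_{S_2}$ to $\Def^{A_{\con}\rmod}_{S}$, which rests on the principle that an ideal annihilating $S_2$ annihilates every iterated self-extension of $S_2$. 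This is false for a general ideal: the Jacobson radical of $\hat{A}$ annihilates $S_2$, so your inference would force every flat family over $(\Gamma,p)\in\Art_1$ to be semisimple, i.e.\ the whole deformation theory would be trivial and $A_{\con}$ would be $\CC$ --- contradicting already the width-$d$ example where $A_{\con}\cong\CC[x]/x^d$. The step is rescued only by the specific shape of $I$: it is the two-sided ideal generated by the idempotent $e\in\hat{A}$ projecting onto the summand $\hat{R}$, and an idempotent that kills a module kills all iterated extensions of it, since for $0\to M'\to M\to M''\to 0$ with $eM'=eM''=0$ one gets $eM\subseteq M'$ and hence $eM=e(eM)\subseteq eM'=0$. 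Without invoking the idempotent, your argument proves too much.

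The second gap is finite-dimensionality, which you propose to deduce from ``crepancy of $p$''. Crepancy is irrelevant here (contraction algebras of non-crepant small contractions are just as finite-dimensional), and appealing to ``finiteness of $A_{\con}$ is equivalent to contractibility of $C_{\reduced}$'' imports a deeper theorem in order to prove a statement that theorem presupposes. The working argument is a localization: since $p$ is an isomorphism away from $p^{-1}(0)$, the module $N=p_*\mathcal{N}'$ is locally free on the punctured spectrum of $\hat{R}$, so for any prime $\mathfrak{p}\neq\mathfrak{m}$ the identity of $N_{\mathfrak{p}}$ factors through a free module, giving $(A_{\con})_{\mathfrak{p}}=(\hat{A}/I)_{\mathfrak{p}}=0$; as $A_{\con}$ is a finitely generated $\hat{R}$-module supported at the closed point, it has finite length, hence finite $\CC$-dimension. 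Two smaller points: your representability step is vague at exactly the crux --- the content is the prorepresentability theorem identifying the prorepresenting object of $\Def^{\hat{A}\rmod}_{S_2}$ with the (completion of the) idempotent quotient $\hat{A}/\hat{A}e\hat{A}$, after which finiteness, established independently, discharges the completion and upgrades ``prorepresents'' to ``represents'' on $\Art_1$; you order these steps correctly, but the identification is the theorem itself, not something ``read off'' from the presentation and Massey products. And the vanishing $\HO^1(X,C_i)=0$ you cite concerns Katz's moduli spaces and does no work in the comparison between $Y$, $X$ and $\hat{X}$.
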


\begin{proposition}
\label{ConAl}
The contraction algebra $A_{\con}$ is an analytic Jacobi algebra for a quiver $Q$ with analytic potential $W\in K\{Q\}_{\cyc}$.  The quiver $Q$ has one vertex, and zero, one, or two loops, depending on whether $C$ is of $N$-type $(-1,-1)$, $(0,-2)$ or $(1,-3)$.
\end{proposition}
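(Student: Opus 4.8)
The plan is to extract both the quiver and the potential directly from the deformation theory of the simple object $S_2=\OO_{C_{\reduced}}(-1)[1]$. By Theorem \ref{DWthm}, $A_{\con}$ prorepresents $\Def^{\hat{A}\rmod}_{S_2}$, and transporting along the equivalence $\Psi$ this is the noncommutative deformation functor of $\OO_{C_{\reduced}}(-1)[1]$ inside $\Db(\Coh(\hat{X}))$. Such a functor is controlled by the graded self-extension algebra $E^{\bullet}:=\Ext^{\bullet}_{\hat{X}}(\OO_{C_{\reduced}}(-1),\OO_{C_{\reduced}}(-1))$ together with its minimal $A_{\infty}$-structure, so the whole proposition reduces to computing $E^{\bullet}$ and identifying the shape of this $A_{\infty}$-structure.

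First I would pin down the quiver. Since $\OO_{C_{\reduced}}(-1)$ is a simple object of the perverse heart, $E^0=\Hom=\CC$, so there is a single idempotent and hence a single vertex. The number of loops is then $\dim_{\CC}E^1$, the tangent space of the deformation functor. To compute $E^{\bullet}$ I would use that $i\colon C_{\reduced}\hookrightarrow\hat{X}$ is the regular embedding of a smooth rational curve with normal bundle $N\cong\OO(a)\oplus\OO(b)$, whence $\mathcal{E}xt^p_{\hat{X}}(i_*\OO(-1),i_*\OO(-1))\cong i_*(\wedge^{p} N)$. As $C_{\reduced}\cong\PP^1$ has cohomological dimension one, the local-to-global spectral sequence degenerates and yields $E^n\cong\bigoplus_{p+q=n}\HO^q(\PP^1,\wedge^{p}N)$; in particular $E^1\cong\HO^0(\PP^1,N)=\HO^0(\OO(a))\oplus\HO^0(\OO(b))$. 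For $(a,b)\in\{(-1,-1),(0,-2),(1,-3)\}$ this has dimension $0$, $1$, and $2$ respectively, which is exactly the claimed number of loops.

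Next I would produce the potential. Because the contraction $p$ is crepant, $\hat{X}$ is a smooth Calabi--Yau threefold near $C$, so Serre duality endows $E^{\bullet}$ with a nondegenerate cyclic pairing $E^i\otimes E^{3-i}\to\CC$ (the degeneration above also gives $E^2\cong(E^1)^{*}$ and $E^0\cong(E^3)^{*}$, consistently). This makes the minimal model a \emph{cyclic} (CY$_3$) $A_{\infty}$-algebra, and by the standard correspondence between cyclic $A_{\infty}$-structures and superpotentials the higher products $m_k\colon(E^1)^{\otimes k}\to E^2\cong(E^1)^{*}$ assemble, via the pairing, into a formal potential $W\in\CC\lfb Q\rfb_{\cyc}$ of order $\geq 3$ whose Jacobi algebra $\CC\lfb Q\rfb/\overline{\langle\partial W/\partial a\rangle}$ prorepresents the deformation functor. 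Identifying this with $A_{\con}$ via Theorem \ref{DWthm} then presents $A_{\con}$ as a completed Jacobi algebra for $(Q,W)$.

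The last, and hardest, step is to upgrade formal to analytic, i.e. to show $W\in\CC\{Q\}_{\cyc}$ and $A_{\con}\cong\CC\{Q,W\}$. Since the products $m_k$ are ultimately computed from the algebraic resolution $X\rightarrow Y$, I would obtain the required convergence from the analytic estimates of Toda \cite{To18}. I expect the main obstacle to be precisely the interaction of two delicate inputs: arranging a genuinely cyclic minimal model, so that the relations are the cyclic derivatives of a \emph{single} $W$ (which is where the Calabi--Yau pairing must be shown compatible with homotopy transfer), while simultaneously keeping the structure constants convergent with bounds uniform in the word length. By comparison, the dimension counts fixing the underlying quiver are routine.
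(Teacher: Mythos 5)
Your proposal is correct and follows essentially the same route as the paper's proof: prorepresentability of the noncommutative deformation functor via Theorem \ref{DWthm}, the cyclic $A_{\infty}$-structure on $\Ext^{\bullet}(S_2,S_2)$ (from the local Calabi--Yau property) packaging the higher products into a superpotential $W$ whose completed Jacobi algebra, via Koszul duality with the Ginzburg dga, prorepresents the functor, and analyticity of $W$ supplied by Toda \cite[Lem.4.1]{To18}. Your explicit local-to-global computation $\Ext^1\cong\HO^0(\PP^1,\mathcal{O}(a)\oplus\mathcal{O}(b))$, yielding the loop counts $0,1,2$ for the three $N$-types, fills in a detail the paper leaves implicit, while the two difficulties you flag (a genuinely cyclic minimal model, and convergence of the structure constants) are exactly the points the paper discharges by citation to \cite{KSDef,Seg08} and \cite{To18} respectively.
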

\begin{proof}
By \cite{DW16}, $A_{\con}$ represents the deformation functor $\Def^{\hat{A}\rmod}_{S_2}$.  Let $(\Ext(S_2,S_2),b_{\bullet})$ be the cyclic $A_{\infty}$-endomorphism algebra of $S_2$, and let $W(x)=\sum_{i\geq 2}\frac{1}{i+1}\langle b_{i}(x,\ldots,x),x\rangle$ be the formal function on $V=\Ext^1(S_2,S_2)$ arising from the cyclic $A_{\infty}$-structure.  Then it is well-known (see e.g. \cite{Seg08, KSDef, Koszul}) that $\Def^{\hat{A}\rmod}_{S_2}$ is represented by the zeroeth cohomology of the Koszul dual of $(\Ext(S_2,S_2),b_{\bullet})$, which is by construction the Ginzburg dga $\hat{\Gamma}(Q,W)$ for the pair $Q,W$ (see \cite{ginz}), where $Q$ is a quiver with arrows given by a basis for $V$.  The analyticity of the potential $W$ is given by \cite[Lem.4.1]{To18}.
\end{proof}

\begin{remark}
Since $A_{\con}$ is a finite-dimensional formal Jacobi algebra, in principle we could have used \cite[Thm.3.16]{HuZh18} to produce a formal isomorphism $G$ of the completed free path algebra $\widehat{\mathbb{C} Q}$ such that $G_*W$ is algebraic, and considered the cohomological DT theory of $\mathbb{C}(Q,G_*W)$, which is given in terms of algebraic (not analytic) mixed Hodge modules.  The automorphism $G$ induces an automorphism $G_{\gamma}$ of the formal completion of $\Rep_{\gamma}(Q)$ around the nilpotent locus.  The resulting motivic Donaldson--Thomas invariant, defined for the possibly formal function $\Tr(W)$ via \cite[Rem.3.4]{BuJoMe13}, is the same for $G_*W$ and $W$, since the motivic vanishing cycles are determined by spaces of arcs from the nilpotent locus, and $G_{\gamma}$ acts on these via isomorphisms.  We prefer to work at the level of analytic potentials, since the question of whether $G$ can be chosen to be convergent in a sense that enables us to extend $G_{\gamma}$ to an analytic neighbourhood of the nilpotent locus seems to be open\footnote{In fact this question has subsequently been settled in the affirmative by Hua and Keller; see \cite[Thm.4.4]{HuKe19}.}.  In any case we will want to be able to consider vanishing cycles for infinite-dimensional Jacobi algebras when we come to Conjecture \ref{SRC}, where the results of \cite{HuZh18} no longer apply.
\end{remark}
\begin{remark}
The question of \textit{which} finite-dimensional Jacobi algebras arise as contraction algebras is open, but according to a conjecture of Brown and Wemyss, for any potential $W$ on a select list of symmetric quivers giving rise to finite $A=\CC\{Q,W\}$, the algebra $A$ is a contraction algebra for some curve.  See \cite{BW21} for background and further developments regarding this conjecture.  In this paper we deal only with irreducible contractible curves, so in fact the list of quivers we are concerned with is quite short: the zero loop quiver, the one loop quiver, and the two loop quiver.  In the first two cases the conjecture is clearly true, but for the two loop quiver it is still open.  We refer to \cite{HuKe18,Iy20,IySh18,IySm18,BW21} for some recent classification results on finite-dimensional Jacobi algebras.
\end{remark}

\subsection{From the contraction algebra to Gopakumar--Vafa invariants}

We denote by $\Msp_r^{m\sframed} (A_{\con})$ the fine moduli space of pairs $(M,(v_1,\ldots,v_m))$ where $M$ is a $r$-dimensional $A_{\con}$-module and $v_1,\ldots,v_m\in M$ generate $M$ under the action of $A_{\con}$.  If we set $m=1$ then this is the closed subscheme of $\ncHilb_{\gamma}(Q)$, the points of which correspond to stable framed $A$-modules.  Due to the parity condition in Theorem \ref{MRthm} and Proposition \ref{MRc1}, we will instead focus on the case in which $m\geq 2$ is even.  The scheme $\Msp_r^{m\sframed} (A_{\con})$ is  typically highly singular; in Donaldson--Thomas theory, when confronted with a singular complex scheme $T$, the basic enumerative invariant is not quite the Euler characteristic of the underlying scheme, but the Euler characteristic weighted by the Behrend function $\nu_T$.  The following theorem relates this measure to cohomological DT theory:

\begin{theorem}\cite{PaPr01}.
\label{PPProp}
Let $f\in \Gamma(T')$ be a holomorphic function on a smooth complex manifold, and assume that the complex variety $T$ is isomorphic to $\crit(f)$ as an analytic space.  Then
\[
\chi(T,\nu_T)=\chi(\HO(T',\phi_f\IC_{T'})).
\]
\end{theorem}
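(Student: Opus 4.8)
The plan is to deduce the global identity from a pointwise comparison of the two integrands and then integrate against the topological Euler-characteristic measure on $T$. Both sides can be written as integrals over $T$. The left-hand side is by definition
\[
\chi(T,\nu_T)=\int_T \nu_T(x)\,d\chi(x),
\]
where the integral means stratifying $T$ so that $\nu_T$ is constant on strata and summing $\nu_T(x)\cdot\chi(S)$ over the strata $S$. The right-hand side I would control by the general principle that for any bounded constructible complex $\mathcal{G}$ on a complex variety there is an equality
\[
\chi\bigl(\HO(T',\mathcal{G})\bigr)=\int_{T'}\chi_x(\mathcal{G})\,d\chi(x),\qquad \chi_x(\mathcal{G})=\sum_i(-1)^i\dim\mathcal{H}^i(\mathcal{G})_x .
\]
This rests on additivity of the compactly supported Euler characteristic in distinguished triangles together with the equality $\chi=\chi_c$ for complex algebraic (or suitably tame analytic) spaces, applied to a stratification adapted to $\mathcal{G}$. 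Since $\phi_f\IC_{T'}$ is supported on $\crit(f)\cap f^{-1}(0)\subseteq T$, the right-hand integral is in fact an integral over $T$.

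The \emph{heart} of the matter, which is the content of \cite{PaPr01} (equivalently, Behrend's microlocal description of $\nu_T$), is the pointwise identity
\[
\nu_T(x)=(-1)^{\dim(T')}\bigl(1-\chi(F_x)\bigr),
\]
where $F_x$ denotes the Milnor fibre of $f$ at $x\in T$. Here I would first recall that the stalk cohomology of the vanishing cycle complex computes the reduced cohomology of the Milnor fibre, so that, with the perverse normalization of $\phi_f$ used in this paper, the stalkwise Euler characteristic of $\phi_f$ applied to the shifted constant sheaf $\QQQ_{T'}=\IC_{T'}(\QQQ_{T'})$ is
\[
\chi_x\bigl(\phi_f\,\IC_{T'}(\QQQ_{T'})\bigr)=(-1)^{\dim(T')}\bigl(1-\chi(F_x)\bigr)=\nu_T(x).
\]
Feeding this into the integration formula of the first paragraph then produces the theorem.

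The main obstacle is the sign and normalization bookkeeping, which must be carried out to confirm the precise form of the equality. One must simultaneously track three conventions: the perverse shift built into $\phi_f$, the cohomological shift in $\IC_{T'}(\QQQ_{T'})$, and the normalizing half-Tate twist $\LL^{\dim(T')/2}$ in the definition of $\IC_{T'}$, the latter contributing a factor $\chi(\LL^{1/2})^{\dim(T')}=(-1)^{\dim(T')}$ to $\chi$ by the computation $\chi(\LL^{1/2})=-1$. Reconciling these against the sign $(-1)^{\dim(T')}$ in the Behrend identity is exactly what pins down the stated equality (as opposed to an equality up to a global sign), so this is the step I would treat most carefully. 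A secondary technical point is the local-to-global passage itself, namely the justification of $\chi(\HO(T',\mathcal{G}))=\int_{T'}\chi_x(\mathcal{G})\,d\chi$ for $\mathcal{G}=\phi_f\IC_{T'}$; this is unproblematic here because $T\cong\crit(f)$ is assumed to be a genuine analytic space, so $\phi_f\IC_{T'}$ is bounded constructible with support in $T$, and because $\chi=\chi_c$ allows one to interchange ordinary hypercohomology with the compactly supported theory underlying the additivity argument.
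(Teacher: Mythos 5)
The paper offers no proof of this statement: it is imported directly from \cite{PaPr01} (stated for algebraic stacks in \cite{Behr09}), the author adding only the remark that the original proof of \cite[Cor.2.4]{PaPr01} already covers the analytic spaces needed here. Your reconstruction --- writing both sides as integrals against $d\chi$ via the standard local-to-global formula $\chi(\HO(T',\mathcal{G}))=\int_{T'}\chi_x(\mathcal{G})\,d\chi$, and invoking the pointwise identity $\nu_T(x)=(-1)^{\dim(T')}\bigl(1-\chi(F_x)\bigr)$, which you correctly treat as the imported content of \cite{PaPr01}/Behrend rather than something to re-derive --- is precisely the argument of the cited sources (including the shift/twist bookkeeping you rightly flag as the only delicate normalization point), so it coincides with the proof the paper delegates to the literature.
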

This is stated in Behrend's original paper \cite[Sec.1.2]{Behr09}, which dealt with algebraic stacks.  The original proof of \cite[Cor.2.4]{PaPr01} is for analytic subspaces.  It follows that if, with the assumptions of Theorem \ref{PPProp} the underlying topological space of $T$ is a single isolated point $p$, then $\nu_T(p)$ is equal to the Milnor number of $f$, i.e. 
\[
\nu_T(p)=\dim_{\CC}(\OO_T).  
\]

Finally, we state the relation between weighted Euler characteristics and the Gopakumar--Vafa invariants.  This is a slight generalisation of \cite[Thm.4.4]{HuTo18}.  

\begin{theorem}\cite{HuTo18}
\label{HTprop}
Write $\mathcal{Z}^m(t):=\sum_{i\geq 0}\chi(\Msp_i^{m\sframed} (A_{\con}),\nu_{\Msp_i^{m\sframed} (A_{\con})})t^i$.  Then there is an equality of generating series
\[
\mathcal{Z}^m(t)=\prod_{1\leq i\leq l}(1-(-1)^{m\cdot i} t^i)^{m\cdot i\cdot n_{i}}.
\]
\end{theorem}
The proof is a minor adaptation of the proof of \cite[Thm.4.4]{HuTo18}, and so we only sketch it: see \cite{HuTo18} for more details.
\begin{proof}
Recall that we define $\mathcal{C}$ to be the full subcategory of objects such that $f_*\mathcal{J}=0$.  Let $\mathcal{C}_0=\mathcal{C}\cap \Coh(X)$.  Let $\mathcal{L}'_1,\ldots,\mathcal{L}'_n$ be line bundles defined by divisors $D_1,\ldots,D_n$ on $\hat{X}$, each intersecting $C$ at pairwise disjoint points, and satisfying $D_i\cdot C=1$.  Then Van den Bergh's equivalence $\Psi$ of Proposition \ref{VdBequiv} induces the equivalence $\Psi\colon \mathcal{C}_0\rightarrow A_{\con}\rmod$.  For each $i=1,\ldots,n$ there is a diagram of functors, commuting up to natural equivalence
\[
\xymatrix{
\ar[dr]_{\otimes \mathcal{O}_{D_i}}\mathcal{C}_0\ar[rr]^-{\Psi}&&A_{\con}\rmod\ar[dl]^{\mathrm{forg}}\\
&\mathrm{Vect}.
}
\]
So the data of an $A_{\con}$-module $\rho$ and $n$ elements of the underlying vector space is equivalent to the data of a coherent sheaf $\mathcal{J}$ in $\mathcal{C}_0$ along with a section $s$ of $\mathcal{J}\otimes \mathcal{O}_{\hat{X}}(\sum_{i\leq n} D_i)$.  As in \cite[Thm.4.4]{HuTo18}, the condition on this data to define a surjection $A_{\con}^{\oplus n}\rightarrow \rho$ is the same as the condition for the pair $(\mathcal{J},s)$ to be a parabolic stable pair, in the terminology of \cite{To15}.  Then the result is a direct application of \cite[Prop.3.16]{To15}.
\end{proof}

\section{Refined invariants for contractible curves}
\label{finalsec}
\subsection{Refined Gopakumar--Vafa invariants for contractible curves, \`a la Katz \cite{Katz08}}
\label{alaK}
We start by giving an approach to refined Gopakumar--Vafa invariants arising from the combination of the original definition of Katz for the Gopakumar--Vafa invariants of a flopping curve, with the work of Joyce et al.  

Recall that by part of Katz's work, the reduced subscheme of the fine moduli scheme $\Msp^{\stable}_{r,1}(X)$ is a single point (Theorem \ref{Katzprop}).  The unreduced scheme is a fine moduli scheme, and the map $\Msp^{\stable}_{r,1}(X)\rightarrow \Mst^{\stable}_{r,1}(X)$ to the stack of stable sheaves is a $\mathbb{C}^*$-bundle.  By \cite[Cor.2.13]{PTVV} this latter stack is the underlying algebraic stack of a (-1)-shifted symplectic stack.  In \cite{BBJ18} the authors use this fact to endow $\Msp^{\stable}_{r,1}(X)$ with a $d$-critical locus description in the sense of \cite{Joy13}.  Since the underlying scheme is a point, and so any neighborhood of the point is the entire scheme, it follows also from \cite[Thm.5.18]{BBJ18} that $\Msp^{\stable}_{r,1}(X)$ is the scheme-theoretic critical locus of a function $g$ on a smooth algebraic variety $U$.  The virtual canonical bundle $K=K_{M,s}$ on $M=(\Msp^{\stable}_{r,1})_{\reduced}$ is then a line bundle on $M$, which is clearly trivial (since $M$ is a point), so that we can pick an isomorphism $\mathcal{O}_{M_{\reduced}}^{\otimes 2}\cong K_{M,s}$.  In the language of \cite{Joy13}, this d-critical scheme admits trivial orientation.  We abbreviate $L=\mathcal{O}_{M_{\reduced}}$.  Given the data of the oriented $d$-critical scheme $(M,s,L)$, \cite[Thm.6.9]{Br12} constructs a canonical Verdier self-dual monodromic mixed Hodge module $\Phi_{M,s,L}$ on $M$.  By construction, this is just the monodromic mixed Hodge module $\phim{g}\IC_U$.  Considered as a cohomologically graded monodromic mixed Hodge structure, it is concentrated in degree zero.

The above discussion establishes part (1) of the following theorem, while part (2) follows by Theorem \ref{Katzprop} and Theorem \ref{PPProp}.  Part (3) is a consequence of the definition of the dimension of a monodromic mixed Hodge structure, and part (1).
\begin{theorem}
\label{KGVprop}
Let
\[
\GV'_{C,r,0}:=\HO(\Msp^{\stable}_{r,1},\Phi_{M,s,L})
\]
be the genus zero cohomological Gopakumar--Vafa invariants\footnote{The subscript zero for the Gopakumar--Vafa invariant refers to the genus, the subscript 1 on the right hand side refers to the Euler characteristic of the coherent sheaves we are considering.} defined for the flopping curve $C$ as above.  Then
\begin{enumerate}
\item
The invariants $\GV'_{C,r,0}$ are Verdier self-dual complexes of monodromic mixed Hodge structures, concentrated in cohomological degree zero (i.e. they are monodromic mixed Hodge structures).
\item
There is an equality $\dim(\GV'_{C,r,0})=n_{C,r}$.
\item
If $n_{C,r}$ vanishes, then so does $\GV'_{C,r,0}$.
\end{enumerate}
\end{theorem}

\subsection{Refined Gopakumar--Vafa invariants from BPS cohomology; proof of Theorem \ref{ThmA}}
\label{ThmA_proof_sec}
In \cite{MaTo18} a proposal is given for the definition of (all genus) Gopakumar--Vafa invariants, which we briefly recall.  Let $\beta\in\HO_2(Y,\mathbb{Z})$ be a homology class.  Then $M=\Msp_{(\beta,1)}(Y)$ is a fine moduli space, and moreover a d-critical scheme, which is assumed to carry a special kind of orientation $K^{1/2}$ called Calabi--Yau orientation data (see the appendix to \cite{MaTo18}).  Then as above, $M=\Msp^{\sstable}_{(\beta,1)}(Y)$ is a fine moduli scheme, and we obtain the sheaf $\Phi_{M,s,K^{1/2}}\in \Perv(M)$.  Maulik and Toda define the numbers $n_{g,\beta}$ via
\[
\sum_{i\in \mathbb{Z}}\chi (\Ho^i(\pi_*\Phi_{M,s,K^{1/2}}))y^i=\sum_{g\geq 0} n_{g,1,\beta}(y^{1/2}+y^{-1/2})^{2g}.
\]
Here $\pi$ is the Hilbert-Chow map.  This map is projective, and the sheaf $\Phi_{M,s,K^{1/2}}$ is Verdier self-dual, so that its direct image along $\pi$ is also Verdier self dual, and the definition makes sense (i.e. the left hand side is invariant under $y\mapsto y^{-1}$).

The correct way to generalise this definition for non-primitive classes in $\mathbb{Z}^2$, i.e. classes of the form $(r,0)$, as opposed to $(r,1)$, is to replace the sheaf $\Phi_{M,s,K^{1/2}}$ by the BPS sheaf (see \cite{To17} for an application of this definition to wall crossing), i.e. we first define
\[
\BPSs_{\Mst^{\sstable}_{r,\beta}(Y)}:=\Ho^1(p_{r,\beta,*}\Phi_{M,s,K^{1/2}})
\]
where
\[
p_{r,\beta}\colon \Mst_{r,\beta}^{\sstable}\rightarrow \Msp_{r,\beta}^{\sstable}
\]
is the map from the moduli stack to the coarse moduli space, and then define
\[
\sum_{i\in \mathbb{Z}}\chi (\Ho^i(\pi_*\BPSs_{\Mst^{\sstable}_{r,\beta}(Y)}))y^i=\sum_{g\geq 0} n_{g,r,\beta}(y^{1/2}+y^{-1/2})^{2g}.
\]

By Proposition \ref{ConAl}, the contraction algebra is an analytic Jacobi algebra: $A_{\con}\cong \CC\{Q,W\}$ for some analytic potential $W$.  By Theorem \ref{DWthm} $A_{\con}$ is finite-dimensional, and so by Proposition \ref{nilpProp}, the stack of $\CC\{Q,W\}$-modules is an open and closed substack of any open analytic neighbourhood in which the function $\Tr(W)$ is defined, and we can define the cohomological BPS invariants for $A_{\con}$ as in Definition \ref{BPSdef}.  By \cite{To18} there is an isomorphism of stacks
\begin{equation}
\label{derIso}
\Rep_r(\CC\{Q,W\})\cong \Mst_{r,0}^{\sstable}(Y).
\end{equation}
The stack $\Rep_r(\CC\{Q,W\})$ has a d-critical structure\footnote{The question of whether the isomorphism (\ref{derIso}) preserves the oriented d-critical structures on either side is still open, and will be returned to in future work.  See, for example, the discussion around \cite[Rem.A.1]{To17}.} $s'$ coming from its presentation as a global critical locus, with a canonical orientation, which is moreover Calabi--Yau in the sense of Maulik and Toda (see \cite[Thm.6.4.2]{Dav10a}), and we therefore obtain an oriented d-critical structure on $\Mst_{r,0}^{\sstable}(Y)$, for which the resulting monodromic mixed Hodge module $\Phi_{M,K,s'}$ is the vanishing cycle complex of the function $\Tr(W)$.  By \cite{DaMe15b} there is an isomorphism
\[
\Ho^1\left((\Rep_r(\CC Q)\xrightarrow{p} \Msp_r(Q))_*\underline{\mathbb{Q}}_{\Rep_r(\CC Q)}\right)\cong \IC_{\Msp^{\stable}_r(Q)}
\]
and so 
\[
\HO(\Msp_r(Q),\Ho^1(p_*\Phi_{M,K,s})')\cong \BPS_{\mathbb{C}\{Q,W\},r}.
\]
Putting this all together, we see that the BPS cohomology for the stack of semistable sheaves on $X$ of class $(r,0)$, with the d-critical structure coming from the isomorphism (\ref{derIso}), is given by
\[
\BPS_{\Mst^{\sstable}_{(r,0)}(X)}=\BPS_{A_{\con},r}.
\]

\begin{proposition}
\label{FL}
There is an equality for all $r\in\mathbb{N}$
\[
\dim(\BPS_{A_{\con},r})=n_{C,r}
\]
\end{proposition}
\begin{proof}
Setting $n=2$ in (\ref{MRiso2}) the graded Euler characteristic of the left hand side is given by
\begin{align*}
\sum_{i\geq 0}\chi\left(\HO(\Msp^{2\sframed}_i(Q),\phim{\Tr(W)}\IC_{\Msp^{2\sframed}_i(Q)}\lvert_{\nilp})\right)t^i=&\sum_{i\geq 0}\chi \left(\Msp^{2\sframed}_i(A_{\con}),\nu_{\Msp^{2\sframed,\nilp}_i(A_{\con})}\right)t^i\\ =&\mathcal{Z}^2(t)
\end{align*}
by Theorem \ref{PPProp}.  The graded Euler characteristic of the right hand side of (\ref{MRiso2}) is given by
\[
\prod_{1\leq i\leq l}(1- t^i)^{2i\cdot \dim(\BPS_{A_{\con},i})}
\]
and so the result follows directly from Theorem \ref{HTprop}.
\end{proof}

We have seen that $\Msp_{r,0}^{\sstable}(Y)$ is isomorphic to a point, as is the Chow variety.  Also, $\pi_*\BPSs_{\Mst^{\sstable}_{r,0}(Y)}$ is a perverse sheaf, and so all higher genus Gopakumar--Vafa invariants vanish, and we finally define
\begin{definition}
\label{GV_co_def}
We define the genus zero Gopakumar--Vafa cohomology for the contractible curve $C$ by setting
\[
\GV_{C,r,0}=\BPS_{A_{\con},r}.
\]
\end{definition}
Then Theorem \ref{ThmA} follows from Proposition \ref{FL} and Theorem \ref{ThmB}.
\begin{remark}
Since Theorem \ref{ThmB} identifies the dimension of $\GV_{C,r,0}$ with $n_{C,r}$, the analogue of part (3) of Theorem \ref{KGVprop} holds.  We emphasize this since, a priori, it is very difficult to demonstrate vanishing of refined invariants from vanishing of numerical ones.  Were this not the case, the paper \cite{DM12} calculating motivic DT invariants of $(0,-2)$ curves would have been very short.  This vanishing result, along with the vanishing
\[
n_{C,r}=0\quad \textrm{for }r>l(C)
\]
means that it is possible to calculate refined invariants by hand.  
\end{remark}
\begin{remark}
For an application of this vanishing, see the paper \cite{VG19}, which extends \cite{BrWe17} to the refined setting.  In the earlier paper, a pair of length 2 flopping curves are given that have the same Gopakumar--Vafa invariants, but which are non-isomorphic, and are distinguished by their contraction algebras.  In \cite{VG19} it is shown that these flopping curves are not distinguished by their refined Gopakumar--Vafa/BPS invariants either.
\end{remark}
\begin{definition}
We define successively more refined invariants $n_{C,r}(q^{1/2})$, $n_{C,r}(z_1,z_2)$, $n_{C,r,\MMHS}$ by taking $\wtm$, $\wth$ and the class in the Grothendieck group $\KK(\MMHS)$, respectively, of $\GV_{C,r,0}$.  These specializations are as defined in Section \ref{specSec}.
\end{definition}
The following is a special case of Corollary \ref{pureCor}.
\begin{proposition}
Let $A_{\con}\cong \CC(Q,W)$, where $W$ is quasi-homogeneous.  Then $n_{C,r}(q^{1/2})\in\mathbb{N}$.
\end{proposition}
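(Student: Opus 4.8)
The plan is to reduce the statement directly to Corollary \ref{pureCor}. By the definition immediately preceding this proposition we have $\GV_{r,0}=\BPS_{A_{\con},r}$, and $n_{C,r}(q^{1/2})$ is by definition $\wtm(\GV_{r,0},q^{1/2})$. Thus it suffices to show that $\wtm(\BPS_{A_{\con},r},q^{1/2})\in\mathbb{N}$, which is exactly the $q$-refined BPS invariant $\omega_{A_{\con},r}(q^{1/2})$. The only work is therefore to transport the purity conclusion of Corollary \ref{pureCor}, which is phrased for the \emph{algebraic} Jacobi algebra $\CC(Q,W)$, across the isomorphism $A_{\con}\cong\CC(Q,W)$ to the (a priori analytically defined) invariant of the contraction algebra.

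The first step is to unwind Definition \ref{BPSdef} and match the analytic invariant $\BPS_{A_{\con},r}=\HO_c(\Msp_r(Q),\phim{\Tr(W)}\IC'_{\Msp_r(Q)}|_{\nilp})^*$ with the algebraic invariant $\BPS_{\CC(Q,W),r}=\HO_c(\Msp_r(Q)_{\mathcal{S}},\phim{\Tr(W)}\IC'_{\Msp_r(Q)})^*$ occurring in Corollary \ref{pureCor}. Since $W$ is quasi-homogeneous, the function $\Tr(W)$ on $\Msp_r(Q)$ is $\mathbb{C}^*$-equivariant for the action induced by the grading on the arrows, so exactly as in the proof of Corollary \ref{pureCor} the monodromic mixed Hodge module $\phim{\Tr(W)}\IC_{\Msp_r(Q)}$ is supported at the single point $0_r\in\Msp_r(Q)$. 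This point is nilpotent, so restriction to the nilpotent locus leaves the object unchanged; and in the quasi-homogeneous case $\mathcal{S}$ contains all $\CC(Q,W)$-modules, so restriction to $\Msp_r(Q)_{\mathcal{S}}$ also leaves it unchanged. Hence the two restrictions produce the same monodromic mixed Hodge structure, and $\BPS_{A_{\con},r}\cong\BPS_{\CC(Q,W),r}$.

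With this identification, Corollary \ref{pureCor} applies directly: $\BPS_{\CC(Q,W),r}$ is pure, and therefore $\omega_r(q^{1/2})=\wtm(\BPS_{\CC(Q,W),r},q^{1/2})$ lies in $\mathbb{N}\subset\mathbb{Z}[q^{\pm 1/2}]$. Combining this with the previous paragraphs gives $n_{C,r}(q^{1/2})=\omega_{A_{\con},r}(q^{1/2})\in\mathbb{N}$, as claimed. The one point requiring care, and the only real obstacle, is precisely the comparison between the analytic BPS invariant of the contraction algebra and the algebraic one governed by Corollary \ref{pureCor}; once the quasi-homogeneity argument collapses the support of the vanishing cycle sheaf onto $0_r$, the distinction between restricting to the nilpotent locus and restricting to $\mathcal{S}$ disappears, and the proposition follows as an immediate special case.
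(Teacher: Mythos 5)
Your proof is correct and follows exactly the route the paper intends: the paper states this proposition with no written argument beyond the sentence that it is ``a special case of Corollary \ref{pureCor}'', and your reduction to that corollary is precisely that specialization. The extra work you do --- using quasi-homogeneity to collapse the support of $\phim{\Tr(W)}\IC'_{\Msp_r(Q)}$ onto $0_r$ so that restriction to the nilpotent locus and restriction to $\Msp_r(Q)_{\mathcal{S}}$ (with $\mathcal{S}$ all modules) coincide, identifying $\BPS_{A_{\con},r}$ with $\BPS_{\CC(Q,W),r}$ --- is exactly the glue the paper leaves implicit, and you supply it correctly.
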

The above proposition is enough to make one wonder whether $q$-refined Gopakumar--Vafa invariants carry any extra information at all:
\begin{conjecture}
\label{purityQ}
The monodromic mixed Hodge structure $\GV_{C,r,0}$ is pure for every flopping curve $C$, and every $r$.  Equivalently $n_{C,r}(q^{1/2})=n_{C,r}$ for all $C$ and $r$.
\end{conjecture}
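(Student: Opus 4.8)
The plan is to reduce the conjecture to a statement about monodromy and then to attack that statement geometrically. By Theorem \ref{Thm2} we have $\GV_{C,r,0}\cong\BPS_{A_{\con},r}$, and by Proposition \ref{ConAl} the contraction algebra is an analytic Jacobi algebra $\CC\{Q,W\}$; by Theorem \ref{Jacthm} the monodromic mixed Hodge structure $\BPS_{A_{\con},r}$ is concentrated in cohomological degree zero, is Verdier self-dual, and (by Proposition \ref{simplesProp} and Proposition \ref{nilpProp}) is supported at the single point $0_r\in\Msp_r(Q)$. Thus it is the vanishing cycle cohomology $\HO(\Msp_r(Q),\phim{\Tr(W)}\IC'_{\Msp_r(Q)}\lvert_{\nilp})$ of the analytic function $\Tr(W)$ at an isolated critical point. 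Using the description of the weight filtration on vanishing cohomology in \cite[(5.1.6.2)]{Sa88} together with Proposition \ref{mSprop}, purity of $\GV_{C,r,0}$ is equivalent to the vanishing of the logarithm of the unipotent part of the monodromy, i.e. to semisimplicity of the Milnor monodromy of $\Tr(W)$ at $0_r$. So the entire problem reduces to establishing that this monodromy is semisimple.

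The cleanest sufficient condition is quasi-homogeneity, which already settles the conjecture by Corollary \ref{pureCor}, since then $\Tr(W)$ is $\CC^*$-invariant and its monodromy has finite order. The first route I would pursue is therefore to show that the analytic potential can always be arranged to be quasi-homogeneous. Here I would exploit the $A_\infty$ origin of $W$ from Proposition \ref{ConAl}: the potential is built from the cyclic minimal $A_\infty$ structure on $\Ext^\bullet(S_2,S_2)$, and the symbolic-power filtration $I^{(\bullet)}$ defining the thickenings $C_1\subset C_2\subset\cdots\subset C_l$ induces a candidate weight grading on this $\Ext$-algebra. If one can show that the cyclic structure is compatible with, and of strictly positive weight for, such a grading, then $W$ is quasi-homogeneous and Corollary \ref{pureCor} finishes. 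The obstruction is that the higher $A_\infty$ products need not be homogeneous for any such grading — which is precisely why the potentials mined by Brown and Wemyss in $N$-type $(1,-3)$ are typically not weighted-homogeneous on the nose — so this route cannot be expected to work unconditionally.

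Failing a direct quasi-homogeneity statement, the second route is a degeneration argument modelled on Katz's computation of $n_{C,r}$ recalled in Section \ref{flopping_curves_sec}: deforming $f\colon\Delta\to\Def(X_0)$ to a generic nearby map replaces $C$ by a configuration of $(-1,-1)$-curves, each rigid and contributing a pure weight-zero monodromic mixed Hodge structure. The proposal is to realise this as a one-parameter family of analytic potentials $W_t$ with $W_0=W$ and $W_t$ conifold-like (hence pure) for generic $t$, and to argue that the vanishing cohomologies assemble into a family of monodromic mixed Hodge structures whose total dimension is constantly $n_{C,r}$ by Theorem \ref{Katzprop}. Constancy of dimension together with the self-duality supplied by Theorem \ref{Jacthm} severely constrains the weight filtration at the special fibre, and one would try to combine upper-semicontinuity of the sizes of the Jordan blocks of the monodromy under specialization with their known triviality at the generic point. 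The main obstacle — and the reason this remains a conjecture — is exactly the control of the unipotent part of the monodromy at $t=0$: specialising a family of semisimple monodromies can a priori produce nontrivial Jordan blocks, so one genuinely needs an extra geometric input to rule this out, for instance a global $\CC^*$- or contact structure on Reid's local model, or an identification of $\GV_{C,r,0}$ with the manifestly better-behaved invariant $\GV'_{r,1}$ of Theorem \ref{KGVprop} via Conjecture \ref{SRC}.
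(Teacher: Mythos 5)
You have not proved the statement, and neither does the paper: this is Conjecture \ref{purityQ}, which the paper explicitly leaves open even for $r=1$, supported only by the quasi-homogeneous case (Corollary \ref{pureCor}) and computer experiments of Brown and Wemyss. Your opening reduction is correct and coincides with the paper's own remark in Section \ref{MMHMsec}: via \cite[(5.1.6.2)]{Sa88} the weight filtration on vanishing cohomology is the monodromy filtration, so purity of $\GV_{C,r,0}=\BPS_{A_{\con},r}$ is equivalent to semisimplicity of the monodromy of $\Tr(W)$ at the isolated support point. But both of your routes stall exactly where the paper does. For route 1, there is no reason the cyclic $A_\infty$-structure on $\Ext^\bullet(S_2,S_2)$ is homogeneous for any positive grading --- as you concede, the mined length-$\geq 2$ potentials are not weighted-homogeneous --- and there is an additional analytic subtlety you pass over: even the weaker statement that $W$ can be made \emph{algebraic} by a convergent (not merely formal) change of coordinates is open, as the paper notes in its remark following Proposition \ref{ConAl} on \cite[Thm.3.16]{HuZh18}, so one cannot even reduce to the algebraic setting, let alone a graded one.

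For route 2, the decisive gap is the one you name but cannot fill: semisimplicity of monodromy is not a closed condition in families, so specializing the conifold-like potentials $W_t$ to $W_0=W$ can create nontrivial Jordan blocks, and your constraints do not exclude this. Constancy of $\dim(\BPS_{A_{\con},r})=n_{C,r}$ fixes only the dimension of the vanishing cohomology, and Verdier self-duality forces the weight filtration to be symmetric about the centre, not concentrated there --- a self-dual impure monodromic mixed Hodge structure with weights spread symmetrically is perfectly consistent with everything established in Theorems \ref{Jacthm} and \ref{Thm2}. The paper's citation of \cite{PoMO} and \cite[Sec.4]{AC73} is precisely the point that general singularity theory cannot supply the missing input: isolated plane-curve singularities with non-semisimple monodromy exist, so any proof must use geometry special to contraction algebras. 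Finally, your fallback of identifying $\GV_{C,r,0}$ with $\GV'_{r,1}$ via Conjecture \ref{SRC} would not help even if that conjecture were proved: Theorem \ref{KGVprop} asserts only that $\GV'_{r,1}$ is self-dual and concentrated in degree zero --- the same properties Theorem \ref{Thm2} already gives for $\GV_{r,0}$ --- and contains no purity statement, so the transfer is circular with respect to the property you need.
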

This conjecture is open even for $r=1$.  For instance it is known (e.g. see \cite{PoMO}, or \cite[Sec.4]{AC73}) that there exist functions $f(x,y)$ in two variables with an isolated singularity at the origin, such that the monodromy action on the cohomology of the Milnor fibre is not semisimple (this is equivalent to impurity of vanishing cycle cohomology, since by definition \cite[(5.1.6.2)]{Sa88} the weight filtration on the vanishing cycle cohomology is the monodromy filtration).  On the other hand, extensive computer checks\footnote{Many thanks go to Michael Wemyss and Gavin Brown for running these calculations on the database of contraction algebra potentials that they have mined.} go a long way towards confirming the conjecture, at least for $r=1$. In other words, functions in two variables that arise from Abelianizing potentials on the two loop quiver that give rise to finite-dimensional (analytic) Jacobi algebras appear to always have semisimple monodromy.  

For further evidence of the conjecture, for (infinitely many) length 2 flopping curves, and $r=1,2$, the conjecture is proved as a consequence of the total description of the refined invariants found by Okke van Garderen \cite[Thm.A]{VG19}. 

In any case, the more refined invariants $n_{C,r}(z_1,z_2)$ are not necessarily integers.  For instance, let $C$ be a type $(-2,0)$ curve of width $d$.  Then by Example \ref{oneLoopEx}
\[
n_{C,0}(z_1,z_2)=(z_1z_2)^{-1/2}\sum_{i=1}^d z_1^{i/(d+1)}z_2^{(d+1-i)/(d+1)}.
\]
For the rank 1 refined BPS invariants worked out for length two curves by van Garderen, fractional exponents also appear, arising from nontrivial monodromy actions on middle cohomology of (arbitrary genus) complex curves.

\subsection{Cohomological strong rationality conjecture}
\label{CSRconj}
We finish the paper by proposing a mechanism for proving the strong rationality conjecture using a version of the cohomological Hall algebra of \cite{COHA} adapted for analytic Jacobi algebras.  In order to not add greatly to the length of the paper, and since the goal of this section is merely to state a conjecture, we will be a little more sketchy.

There is an obvious similarity between Theorems \ref{ThmA} and \ref{KGVprop}.  Indeed, we could have proved Theorem \ref{ThmA} by instead \textit{defining} the refined Gopakumar--Vafa invariant to be $\GV'_{C,r,0}$.  This similarity is not accidental --- the reason for suspecting that Theorem \ref{ThmA} was true is the easier (given the existing literature) Theorem \ref{KGVprop}, and the Conjecture \ref{SRC} below.  This conjecture implies as a special case that Theorems \ref{ThmA} and \ref{KGVprop} are logically equivalent, once one upgrades (\ref{derIso}) to an isomorphism of (-1)-shifted symplectic stacks.

The derived equivalence (\ref{VdBequiv}) induces an isomorphism of Grothendieck groups
\[
\Phi:\mathbb{Z}^2=\KK(\Coh_C(X))\rightarrow \KK(\hat{A}\rmod_{\nilp})=\mathbb{Z}^2
\]
between the Grothendieck groups of the category of coherent sheaves supported on $C$ and the category of continuous (i.e. nilpotent) modules for the complete noncommutative crepant resolution $\hat{A}$ of Section \ref{CAsec}, which by the same argument as Proposition \ref{ConAl} (i.e. the results of \cite{To18}) is an analytic Jacobi algebra for some quiver $Q$ with convergent formal potential $W$.  The identifications with $\mathbb{Z}^2$ are given by taking rank and Euler characteristic of 1-dimensional sheaves, and dimension vectors of $\hat{A}$-modules.  Then we recover both flavours of cohomological Gopakumar--Vafa invariants as the BPS cohomology $\BPS_{\mathbb{C}\{Q,W\},\Phi(r,d)}$ for $d=0,1$.

Via \cite{To18} for each $\gamma\in\KK(\hat{A}\rmod)$ there is an analytic open neighbourhood of $0_{\gamma}$ in the stack $\mathfrak{M}_{\gamma}(Q)$, for which one may define the function $\Tr(W)$ and take the critical locus, to obtain a stack isomorphic to an analytic open neighbourhood $\Mst$ of the stack of perverse coherent sheaves supported on $C$, inside the stack of all perverse coherent sheaves on $X$.  

Let $\gamma\in\mathbb{N}^2$ be such that $\Phi^{-1}(\gamma)=(a,b)$ with $a\neq 0$.  Then it follows from \cite[Lem.4.9]{To17} that for $U\subset \Msp_{\gamma}(Q)$ an open neighbourhood of $0_{\gamma}$ on which $\Tr(W)$ is holomorphic, $\phim{\Tr(W)}\IC_U$ is supported at $0_{\gamma}$.  In particular, $\BPS_{\mathbb{C}\{Q,W\},\gamma}$ is well defined for all such $\gamma$.  

On the other hand, for $\gamma=\Phi(0,1)$, and $U$ a contractible analytic neighbourhood of $0_{\gamma}\in \Msp_{\gamma}(Q)$ on which $\Tr(W)$ is holomorphic, one can show that $\HO(U, \phim{\Tr(W)}\IC_{U})\cong \HO(\tilde{U},\mathbb{Q})\otimes\LL^{-3/2}$, where $\tilde{U}$ is the neighbourhood of $C$ corresponding to point sheaves for which the semisimplification (as an $A$-module) lies in $U$.  So in particular, $\HO(U, \phim{\Tr(W)}\IC_{U})\cong \HO(C,\mathbb{Q})\otimes\LL^{-3/2}$ is independent of $U$.  The Tate twist is given by the dimension of $\tilde{U}$.  We consider the direct sum across \textit{all} dimension vectors 
\[
\mathcal{H}_{Q,W}=\bigoplus_{\gamma\in\mathbb{N}^{Q_0}}\HO(p_{\gamma}^{-1}(U_{\gamma}),\phim{\Tr(W)}\IC_{p_{\gamma}^{-1}(U_{\gamma})}).
\]
By the same construction as \cite{COHA}, this carries an associative product, and we have embeddings
\[
\BPS_{\mathbb{C}\{Q,W\},\gamma}\otimes \HO(\pt/\mathbb{C}^*)_{\vir}\hookrightarrow \mathcal{H}_{Q,W,\gamma}
\]
for which the induced map
\[
\Sym\left(\bigoplus_{\gamma\in\mathbb{N}^{2}}\BPS_{\mathbb{C}\{Q,W\},\gamma}\otimes \HO(\pt/\mathbb{C}^*)_{\vir}\right)\rightarrow \mathcal{H}_{Q,W}
\]
is the PBW isomorphism from \cite{DaMe15b}.  In particular, letting $u$ denote the (cohomological degree 2) generator in $\HO(\pt/\CC^*,\QQ)\cong\mathbb{Q}[u]$, and letting $1_C$ denote the (cohomological degree 0) generator of $\HO(C,\mathbb{Q})$, we consider the class $u1_C\in \mathcal{H}_{Q,W,\Phi(0,1)}$.  This has cohomological degree zero, due to the Tate twists.
\begin{conjecture}
\label{SRC}
Let $A=\mathbb{C}\{Q,W\}$ be a noncommutative crepant resolution as above.  For $\gamma\in\mathbb{N}^{Q_0}$, the commutator map
\[
[u1_C,-]\colon\mathcal{H}_{Q,W,\gamma}\rightarrow \mathcal{H}_{Q,W,\gamma+\Phi(0,1)}
\]
maps $\BPS_{\mathbb{C}\{Q,W\},\gamma}$ isomorphically to $\BPS_{\mathbb{C}\{Q,W\},\gamma+\Phi(0,1)}$.
\end{conjecture}
The main evidence for the conjecture comes from the related study of 3-folds $\tilde{X}\times\mathbb{C}$, for $\tilde{X}\rightarrow \mathbb{C}^2/\Gamma$ the resolution of a du Val singularity, where the above operator does indeed provide an isomorphism between cohomological BPS invariants\footnote{This will be explained in forthcoming work of Shivang Jindal.}.  Note that an implication of the conjecture is that $\BPS_{\mathbb{C}\{Q,W\},\Phi(r,n)}$ is independent of $n$.  This is a categorified upgrade of the statement that the Donaldson--Thomas invariant for coherent sheaves of rank $r$ and Euler characteristic $d$ are independent of $d$.  In particular, it implies the independence/strong rationality conjectures in \cite{Tod14}, \cite{StablePairs}, \cite{To17}, for flopping curves.

\appendix
\section{Relation between cohomological and motivic DT invariants}
\label{CoMo}
The motivic DT invariants assigned to an (algebraic) Jacobi algebra $A$ in \cite{KS} are equal to the classes $D[\BPS_{A,\gamma}]\in \KK(\MMHS)$ under $\chi_{\MMHS}$, where $D$ is the involution of $\KK(\MMHS)$ induced by Verdier duality, and $\BPS_{A,\gamma}$ is as in Definition \ref{BPSdef}.  This seems to be well-known to the experts (e.g. see \cite[Sec.7.10]{COHA}), but for the convenience of the reader we recall some of the details.

Let $X$ be a smooth complex variety, let $S\subset X$ be a subvariety, and let $f\in \Gamma(X)$ be a regular function.  We refer to \cite{DL01} for the definition of the motivic vanishing cycle 
\[
[\phi_f]\in\mathrm{K}^{\hat{\mu}}_0(\Var/X)[[\AA^1]^{\pm 1/2}]
\]
of $f$.  This is an element of the naive Grothendieck ring of varieties over $X$.  We pick the normalization of this class such that, if $f=0$, $[\phi_f]=[X\xrightarrow{\id_X}X]$.  Given $S\subset X$ we define the map
\begin{align*}
\int_S\colon \mathrm{K}^{\hat{\mu}}_0(\Var/X)[[\AA^1]^{\pm 1/2}]\rightarrow &\mathrm{K}^{\hat{\mu}}_0(\Var/\pt)[[\AA^1]^{\pm 1/2}]\\
[Y\rightarrow X]\mapsto& [Y\times_X S].
\end{align*}
By \cite[Thm.4.2.1]{DL98}, there is an equality 
\[
\chi_{\MMHS}\left(\int_S[\phi_f]\right)=\left[\HO_c\left(X,(\phim{f}\QQQ_X)_S\right)\right],
\]
and so there is an equality
\[
\chi_{\MMHS}\left(\int_S[\phi_f]\cdot [\mathbb{A}^1]^{-\dim(X)/2}\right)=[\HO_c\left(X,(\phim{f}\IC_X)_S\right)].
\]
By Verdier self-duality of $\phim{f}\IC_X$, there is an isomorphism $\HO_c(X,\phim{f}\IC_X)^*\cong \HO(X,\phim{f}\IC_X)$ and so 
\[
D\circ \chi_{\MMHS}\left(\int_X[\phi_f]\cdot[\mathbb{A}^1]^{-\dim(X)/2}\right)=[\HO(X,\phim{f}\IC_X)].
\]
We let $\tilde{\mathrm{K}}_0^{\hat{\mu}}(\Var/\pt)=\mathrm{K}^{\hat{\mu}}_0(\Var/\pt)[[\AA^1]^{\pm 1/2}]\left[ [\Gl_n(\mathbb{C})]^{-1}]\lvert \;n\geq 1\right]$ be the localization formed by formally inverting the classes of the general linear groups.  By the identity
\[
[\Gl_n(\mathbb{C})]=\prod_{i=1}^n([\mathbb{A}^i]-1)
\]
this is the same as the localization with respect to the classes $(1-[\mathbb{A}^n])$ for all $n$.  We let $\KK(\MMHS)_{\leq n}=\chi_{\textrm{wt}}^{-1}(q^n\mathbb{Z}[q^{-1/2}])$.  Since every MMHS has a filtration with subquotients given by pure MMHSs, there is a surjection of Abelian groups $\KK(\MMHS)_{\leq n}\rightarrow \KK(\MMHS)_{\leq n-1}$ and we let $\widehat{\KK}(\MMHS)$ be the inverse limit, with the natural algebra structure.  Let $X$ be a smooth $\Gl_{\gamma}$-equivariant variety, carrying a $\Gl_{\gamma}$-invariant algebraic function $f$, and let $\mathcal{X}=X/\Gl_{\gamma}$ be the quotient stack.  Let $S\subset X$ be a $G$-invariant subvariety, and let $\mathcal{S}\subset \mathcal{X}$ be the induced inclusion of global quotient stacks.  The lowest $l$ for which the weight $l$ piece of 
\[
\HO^i_c(\mathcal{X},\phim{f}\IC_{\mathcal{X}}\lvert_{\mathcal{S}})^*
\]
is nonzero tends to infinity as $i$ tends to infinty, and so the infinite sum
\[
[\HO_c(\mathcal{X},\phim{f}\IC_{\mathcal{X}}\lvert_{\mathcal{S}})^*]=\sum_{i\in \mathbb{Z}}[\HO^i_c(\mathcal{X},\phim{f}\IC_{\mathcal{X}}\lvert_{\mathcal{S}})^*]
\]
converges in $\widehat{\KK}(\MMHS)$.  The class $D\chi_{\MMHS}([\Gl_n(\mathbb{C})])$ is invertible in $\widehat{\KK}(\MMHS)$ and so there is a unique map
\[
J:\tilde{\mathrm{K}}_0^{\hat{\mu}}(\Var/\pt)\rightarrow \widehat{\KK}(\MMHS)
\]
extending $D\circ \chi_{\MMHS}$.  This is a homomorphism of $\lambda$-rings; by construction of the $\lambda$-ring structure, the map $D\circ \chi_{\MMHS}$ is a $\lambda$-ring homomorphism, which extends to a $\lambda$-ring homomorphism
\begin{equation}
\label{localLambda}
\tilde{\mathrm{K}}_0^{\hat{\mu}}(\Var/\pt)\rightarrow\KK(\MMHS)[(\mathbb{L}^n-1)^{-1}\lvert n\geq 1]
\end{equation}
by \cite[Ex.3.5.4(4)]{DM11}.  The inclusion of the right hand side of (\ref{localLambda}) into the ring of formal power series is again a $\lambda$-ring homomorphism by construction.
  
\begin{proposition}
\label{MotComp}
Let $\mathcal{X},\mathcal{S},f$ be as above.  Then 
\begin{equation}
\label{LOP}
J\left(\int_S[\phi_f]\cdot [\mathbb{A}^1]^{-\dim(\mathcal{X})/2}\cdot [\Gl_{\gamma}]^{-1}\right)=[\HO_c(\mathcal{X},(\phim{f}\IC_{\mathcal{X}})_{\mathcal{S}})^*].
\end{equation}
\end{proposition}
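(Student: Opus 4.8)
The plan is to reduce the statement on the quotient stack $\mathcal{X}=X/G$ (writing $G=\Gl_{\gamma}$) to the variety-level computation $\chi_{\MMHS}(\int_{S}[\phi_{h}])=[\HO_c(Z,(\phim{h}\QQQ_Z)_{S})]$ recorded just before the statement, by feeding the finite-dimensional approximations $\tilde{X}'=(X\times V')/G$ of (\ref{ZTdef}) into both sides and matching powers of $\LL$. Set $d=\dim(X)$ and $g=\dim(G)$, so that $\dim(\mathcal{X})=d-g$ and $\dim(\tilde{X}')=d+\dim(V)-g$, and write $\tilde{S}'=(S\times V')/G$.

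First I would produce a motivic comparison on the approximation. The function $\tilde{f}$ on $\tilde{X}'$ pulls back to $f\circ\pr$ on $X\times V'$ along both the projection $\pr\colon X\times V'\to X$ and the quotient $G$-bundle $\pi_G\colon X\times V'\to\tilde{X}'$, both of which are smooth. Invoking compatibility of the motivic vanishing cycle with smooth pullback and integrating over $S\times V'=\pi_G^{-1}(\tilde{S}')$, the freeness of the $G$-action on $X\times V'$ gives
\[
[V']\cdot\int_S[\phi_f]=[G]\cdot\int_{\tilde{S}'}[\phi_{\tilde{f}}]
\]
in $\tilde{\mathrm{K}}_0^{\hat{\mu}}(\Var/\pt)$; this is consistent with the normalization $[\phi_0]=[\id]$, since for $f=0$ it reduces to $[\tilde{S}']=[S][V'][G]^{-1}$. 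Rearranging, $\int_S[\phi_f]\cdot[G]^{-1}=\int_{\tilde{S}'}[\phi_{\tilde{f}}]\cdot[V']^{-1}$.

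Next I would run both sides of (\ref{LOP}) through this approximation. On the cohomological side, the definition of stacky vanishing cycle cohomology in Section \ref{MMHMsec} gives
\[
\HO_c\!\left(\mathcal{X},(\phim{f}\IC_{\mathcal{X}})_{\mathcal{S}}\right)\cong\HO_c\!\left(\tilde{X}',(\phim{\tilde{f}}\QQQ_{\tilde{X}'})_{\tilde{S}'}\right)\otimes\LL^{-\dim(V)+(g-d)/2}.
\]
Since $\phim{\tilde{f}}\IC_{\tilde{X}'}$ is Verdier self-dual we have $\HO_c(-)^{*}\cong\HO(-)$ and $D\LL^{k}=\LL^{-k}$, so the right-hand side of (\ref{LOP}) equals $D\chi_{\MMHS}(\int_{\tilde{S}'}[\phi_{\tilde{f}}])\cdot\LL^{\dim(V)+(d-g)/2}$. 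For the left-hand side I would substitute the rearranged motivic identity and use that $J$ is a ring homomorphism extending $D\circ\chi_{\MMHS}$, obtaining $D\chi_{\MMHS}(\int_{\tilde{S}'}[\phi_{\tilde{f}}])$ multiplied by $J([V'])^{-1}\cdot J([\AA^1])^{-\dim(\mathcal{X})/2}$. As $J([\AA^1])=\LL^{-1}$ and, in the limit of increasing codimension of $V\setminus V'$, $J([V'])\to\LL^{-\dim(V)}$, these twists combine to $\LL^{\dim(V)+(d-g)/2}$; the two sides then agree, proving (\ref{LOP}). Independence of the approximation — on the cohomological side by the discussion following (\ref{ZTdef}), on the motivic side by the canonicity of $[G]^{-1}$ in the localized ring — justifies the limit.

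The hard part will be the first step: establishing the smooth-pullback compatibility of the \emph{motivic} vanishing cycle in its $\hat{\mu}$-equivariant form and descending $[\phi_{f\circ\pr}]$ along the free $G$-bundle $\pi_G$, since $[\phi_{\tilde{f}}]$ is a delicate object whose behaviour under smooth morphisms must be drawn from the Denef--Loeser theory. Once the displayed motivic identity is in hand, what remains is the bookkeeping of Tate twists together with convergence in $\widehat{\KK}(\MMHS)$, which is controlled by the weight stabilization already used to construct $J$.
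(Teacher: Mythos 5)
Your argument is correct in substance, but it takes a genuinely different route from the paper's. You transport the \emph{motivic} vanishing cycle up and down the approximation bundle: smooth-pullback compatibility of $[\phi_f]$ along $X\times V'\to X$, plus descent along the Zariski-locally trivial $\Gl_{\gamma}$-bundle $X\times V'\to \tilde{X}'$, give the identity $[V']\cdot\int_S[\phi_f]=[\Gl_{\gamma}]\cdot\int_{\tilde{S}'}[\phi_{\tilde{f}}]$, after which only the Denef--Loeser comparison on the smooth variety $\tilde{X}'$ and Tate-twist bookkeeping remain. The paper never invokes any functoriality of the motivic vanishing cycle: it fixes the Grassmannian approximations $U_N\subset V_N$, compares the cohomology of $X$ and $\tilde{X}=(X\times U_N)/\Gl_{\gamma}$ directly via the Serre spectral sequence of the fibre bundle $\tilde{X}\to\prod_i\Gr(N,\gamma(i))$ (yielding the product formula (\ref{SpecSeqK}) in $\widehat{\KK}(\MMHS)$), and the only motivic input beyond the Denef--Loeser comparison already recorded before the proposition is the naive class identity $\prod_i[\Gr(N,\gamma(i))]\cdot[\Gl_{\gamma}]=[U_N]$ coming from specialness of $\Gl_{\gamma}$; the limit $N\to\infty$ then sends $D[\HO_c(U_N,\QQ)\otimes\LL^{-N\lvert\gamma\lvert}]$ to $1$. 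The trade-off: your route requires the $\hat{\mu}$-equivariant smooth-pullback property of the Denef--Loeser vanishing cycle --- this is a genuine extra input, but a quotable one (it is in the work of Bittner and of Guibert--Loeser--Merle), so your flagged ``hard part'' is not a gap --- and in exchange it works for arbitrary approximations $V'\subset V$ as in (\ref{ZTdef}) and transfers verbatim to any special group; the paper's route confines all vanishing-cycle theory to the fixed smooth variety $X$ and handles the stackiness by purely cohomological means.

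Two small repairs. First, the sentence ``since $\phim{\tilde{f}}\IC_{\tilde{X}'}$ is Verdier self-dual we have $\HO_c(-)^{*}\cong\HO(-)$'' is both unnecessary and false as stated: restriction to the locally closed $\tilde{S}'$ destroys self-duality. What your computation actually uses is only the $K$-theoretic identity $[\mathcal{L}^{*}]=D[\mathcal{L}]$, i.e.\ the involution $D$ already built into $J$; delete the appeal to self-duality and nothing changes. Second, $[V']$ is not invertible in $\tilde{\mathrm{K}}_0^{\hat{\mu}}(\Var/\pt)$ (the localization only inverts the classes $[\Gl_n(\CC)]$, equivalently $[\AA^1]^{\pm 1/2}$ and $1-[\AA^n]$), so the ``rearranged'' motivic identity $\int_S[\phi_f]\cdot[\Gl_{\gamma}]^{-1}=\int_{\tilde{S}'}[\phi_{\tilde{f}}]\cdot[V']^{-1}$ is illegitimate where you assert it; instead apply $J$ to the unrearranged identity and divide in $\widehat{\KK}(\MMHS)$, where $J([V'])=\LL^{-\dim(V)}(1-x)$ with the weights of $x$ tending to $-\infty$ as $\codim_V(V\setminus V')$ grows, hence invertible there --- which is in any case what your limiting argument amounts to.
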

\begin{proof}
Set $V_N=\prod_{i\in Q_0}\Hom(\mathbb{C}^N,\mathbb{C}^{\gamma(i)})$ for $N\gg 0$, and let $U_N\subset V_N$ be the subvariety of $Q_0$-tuples of surjective morphisms.  Set
\begin{align*}
\tilde{X}=&(X\times U_N)/\Gl_N\\
\tilde{S}=&S\times_X \tilde{X}.
\end{align*}
Consider the Serre spectral sequence with 
\[
E_2^{p,q}=\HO\left(\prod_{i\in Q_0}\Gr(N,\gamma(i)),\mathbb{Q}\right)\otimes \HO_c(X,\phim{f}\QQQ_{X}\lvert_S).
\]
Since this spectral sequence converges to $\HO_c(\tilde{X},\phim{f}\underline{\mathbb{Q}}_{\tilde{X}}\lvert_{\tilde{S}})$ we deduce that
\begin{align}
\label{SpecSeqK}
&[\HO_c(\tilde{X},\phim{f}\underline{\mathbb{Q}}_{\tilde{X}}\lvert_{\tilde{S}})\otimes\LL^{-N\lvert\gamma\lvert-\dim(\mathcal{X})/2}]=\\&\nonumber \prod_i[\HO(\Gr(N,\gamma(i)),\mathbb{Q})]\cdot[\HO_c(X,\phim{f}\underline{\mathbb{Q}}_X\lvert_{S}\otimes\LL^{-N\lvert\gamma\lvert-\dim(\mathcal{X})/2}].
\end{align}
Letting $N$ tend to infinity, the left hand side of (\ref{SpecSeqK}) converges to the right hand side of (\ref{LOP}).  The group $\Gl_{\gamma}$ is special, meaning that principal $\Gl_{\gamma}$ bundles are Zariski locally trivial.  It follows from the cut and paste relations in $\mathrm{K}_0^{\hat{\mu}}(\Var/\mathbb{C})$ that 
\[
[\Gr(N,\gamma_i)]\cdot[\Gl_{\gamma}]=[U_N].
\]
Applying $J$, we deduce
\[
D[\HO(\Gr(N,\gamma_i),\mathbb{Q})\otimes\LL^{-N\lvert\gamma\lvert}]\cdot D[\HO_c(\Gl_{\gamma},\mathbb{Q})]=D[\HO_c(U_N,\mathbb{Q})\otimes\LL^{-N\lvert\gamma\lvert}].
\]

The right hand side tends to $1$ as we let $N$ tend to infinity, and the result follows.
\end{proof}
We next consider the motivic DT invariants of a quiver $Q$ with potential $W$.  As above, we should choose a Serre subcategory $\mathcal{S}$ of the category of representations of $\mathbb{C}(Q,W)$, and we make our standard assumption that $\mathcal{S}$ is the maximal permissible Serre subcategory, containing all modules $M$ such that on all subquotients of $M$, the value of $\Tr(W)$ is zero.  

Motivic DT invariants are defined in terms of plethystic exponentials, which we very briefly recall.  Given $\alpha$ an element in a $\lambda$-ring, we define $\Exp(\alpha)=\sum_{n\geq 0}\sigma^n(\alpha)$, when this sum makes sense.  So for instance if $B=\KK(\mathcal{A_{\mathbb{N}}})$ is the Grothendieck ring of $\mathbb{N}$-graded objects of an Abelian tensor category, completed so that $[V]=\sum_{n\in \mathbb{N}}[V_i]$ for $V_i$ the $i$th graded piece of $V_i$, and $V\in \mathcal{A}_{\mathbb{N}}$ is concentrated in strictly positive degrees, then
\begin{align*}
\Exp([V])=&\sum_{n\geq 0}[\Sym^n(V)]\\
=&[\Sym(V)].
\end{align*}
Let $(Q,W)$ be an algebraic quiver with potential.  Let 
\begin{align*}
\chi\colon\mathbb{Z}^{Q_0}\times \mathbb{Z}^{Q^0}\rightarrow &\mathbb{Z}\\
(\gamma,\gamma')\mapsto& \sum_{i\in Q_0} \gamma_i\gamma'_i-\sum_{a\in Q_1} \gamma_{s(a)}\gamma'_{t(a)}
\end{align*}
denote the Euler form.  There is an equality
\[
\dim(\mathbb{A}_{Q,\gamma}/\Gl_{\gamma})=-\chi(\gamma,\gamma).
\]
Let $g_{\gamma}\in\Gamma(\mathbb{A}_{Q,\gamma})$ be the function induced by $\Tr(W)$.  Then the motivic DT invariants are defined by the equation
\begin{align*}
\sum_{i\in\mathbb{N}^{Q_0}}\int_{\mathcal{S}}[\phi_{g_{\gamma}}]\cdot[\mathbb{A}^1]^{\chi(\gamma,\gamma)/2}\cdot[\Gl_{\gamma}]^{-1}T^{\gamma}=&\Exp\left(\sum_{0\neq i\in \mathbb{N}^{Q_0}}\Omega_{Q,W,\gamma}[\mathbb{C}^*]^{-1}\cdot[\mathbb{A}^1]^{1/2}T^{\gamma}\right).
\end{align*}
We deduce from Proposition \ref{MotComp} that
\[
J\left(\int_{\mathcal{S}}[\phi_{g_{\gamma}}]\cdot[\mathbb{A}^1]^{\chi(\gamma,\gamma)/2}\cdot[\Gl_{\gamma}]^{-1}\right)=\left[\HO\left(\Mst_{\gamma}(Q),(\phim{\Tr(W)}\IC_{\Mst_{\gamma}(Q)})_{\mathcal{S}}\right)\right].
\]
Taking the hypercohomology of (\ref{Sint}), we deduce that
\begin{align*}
&\sum_{\gamma\in\mathbb{N}^{Q_0}}J\left(\int_{\mathcal{S}}[\phi_{g_{\gamma}}]\cdot[\mathbb{A}^1]^{\chi(\gamma,\gamma)/2}\cdot[\Gl_{\gamma}]^{-1}\right)T^{\gamma}=\\
&\Exp\left(\sum_{0\neq \gamma\in\mathbb{N}^{Q_0}}[\BPS_{\mathbb{C}(Q,W),\gamma}]\cdot [\HO(\pt/\mathbb{C}^*)_{\vir}]T^{\gamma}\right)
\end{align*}
and we deduce the following
\begin{proposition}
There is an equality in $\widehat{\KK}(\MMHS)$
\[
J(\Omega_{Q,W,\gamma})=[\BPS_{Q,W,\gamma}].
\]
In particular, the left hand side lies in the image of the map $\KK(\MMHS)\hookrightarrow \widehat{\KK}(\MMHS)$.
\end{proposition}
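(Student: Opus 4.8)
The plan is to deduce the identity by comparing two generating series that share the same left-hand side, and then stripping off a plethystic exponential. First I would apply the $\lambda$-ring homomorphism $J$ to the defining equation for the motivic invariants $\Omega_{Q,W,\gamma}$. Because $J$ is a homomorphism of $\lambda$-rings it commutes with the plethystic exponential $\Exp$, so the right-hand side of that definition becomes
\[
\Exp\left(\sum_{0\neq\gamma}J(\Omega_{Q,W,\gamma})\cdot J([\mathbb{C}^*])^{-1}\cdot J([\mathbb{A}^1])^{1/2}\,T^{\gamma}\right),
\]
while its left-hand side is exactly the series $\sum_{\gamma}J\left(\int_{\mathcal{S}}[\phi_{g_{\gamma}}]\cdot[\mathbb{A}^1]^{\chi(\gamma,\gamma)/2}\cdot[\Gl_{\gamma}]^{-1}\right)T^{\gamma}$ already computed via Proposition \ref{MotComp}.

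Next I would invoke the second generating-series identity, obtained above by taking hypercohomology of (\ref{MRiso2nilp}), which expresses the very same left-hand side as $\Exp\left(\sum_{0\neq\gamma}[\BPS_{\mathbb{C}(Q,W),\gamma}]\cdot[\HO(\pt/\mathbb{C}^*)_{\vir}]\,T^{\gamma}\right)$. Equating the two right-hand sides and applying the plethystic logarithm $\Log$ (the inverse of $\Exp$ on series with vanishing constant term in $\widehat{\KK}(\MMHS)[[T^{\gamma}]]$), I would then compare coefficients of $T^{\gamma}$ to obtain
\[
J(\Omega_{Q,W,\gamma})\cdot J([\mathbb{C}^*])^{-1}\cdot J([\mathbb{A}^1])^{1/2}=[\BPS_{\mathbb{C}(Q,W),\gamma}]\cdot[\HO(\pt/\mathbb{C}^*)_{\vir}].
\]

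The remaining point is the purely Tate computation $J([\mathbb{C}^*])^{-1}\cdot J([\mathbb{A}^1])^{1/2}=[\HO(\pt/\mathbb{C}^*)_{\vir}]$. Here I would use that $\chi_{\MMHS}([\mathbb{A}^1])=\mathbb{L}$ and $\chi_{\MMHS}([\mathbb{C}^*])=\mathbb{L}-1$ for the trivial $\hat{\mu}$-action, together with $D\mathbb{L}=\mathbb{L}^{-1}$, so that $J([\mathbb{C}^*])^{-1}J([\mathbb{A}^1])^{1/2}=\mathbb{L}^{1/2}/(1-\mathbb{L})$; on the other hand $\HO(\pt/\mathbb{C}^*)=\bigoplus_{k\geq 0}\mathbb{L}^k$ and the virtual twist contributes $\mathbb{L}^{1/2}$, giving $[\HO(\pt/\mathbb{C}^*)_{\vir}]=\mathbb{L}^{1/2}/(1-\mathbb{L})$ in $\widehat{\KK}(\MMHS)$. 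Since this class is a unit, I may cancel it from both sides to conclude $J(\Omega_{Q,W,\gamma})=[\BPS_{\mathbb{C}(Q,W),\gamma}]$. The final assertion then follows because, by Proposition \ref{simplesProp}, $\BPS_{\mathbb{C}(Q,W),\gamma}$ is a genuine finite-dimensional monodromic mixed Hodge structure, so its class already lies in the image of $\KK(\MMHS)\hookrightarrow\widehat{\KK}(\MMHS)$.

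The main obstacle I anticipate is not any single step but the bookkeeping needed to make the manipulation legitimate in the completed ring $\widehat{\KK}(\MMHS)$: one must check that all the relevant series converge (lowest weights tending to infinity, as arranged in the construction of $J$), that $\Exp$ is invertible on the range in question so that coefficient comparison after $\Log$ is valid, and that the Verdier duality built into $J$ is tracked correctly in the Tate identity. None of these is deep, but the matching of the factor $[\mathbb{C}^*]^{-1}[\mathbb{A}^1]^{1/2}$ with $[\HO(\pt/\mathbb{C}^*)_{\vir}]$ under $J$ is where the normalization conventions must be pinned down precisely.
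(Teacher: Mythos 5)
Your proposal is correct and follows essentially the same route as the paper: you apply the $\lambda$-ring homomorphism $J$ to the defining equation for $\Omega_{Q,W,\gamma}$ (so that $J$ passes through $\Exp$), match the result against the generating series obtained from the hypercohomology of the Meinhardt--Reineke-type isomorphism via Proposition \ref{MotComp}, and cancel the unit $J([\mathbb{C}^*]^{-1}[\mathbb{A}^1]^{1/2})=[\HO(\pt/\mathbb{C}^*)_{\vir}]=\LL^{1/2}/(1-\LL)$ after taking $\Log$. The only difference is presentational: the paper leaves the $\Log$ and Tate-factor cancellation implicit, whereas you spell them out, together with the convergence bookkeeping in $\widehat{\KK}(\MMHS)$ that the construction of $J$ already guarantees.
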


\bibliographystyle{amsplain}
\bibliography{SPP}

\vfill

\textsc{\small B. Davison: School of Mathematics, the University of Edinburgh}\\
\textit{\small E-mail address:} \texttt{\small ben.davison@ed.ac.uk}\\
\\

\end{document}